\documentclass[12pt]{article} 
\usepackage{tikz,ytableau}
\usetikzlibrary{calc}\usetikzlibrary{matrix}
\usepackage{algorithm}
\usepackage[noend]{algpseudocode}
\usepackage{hyperref}

\ytableausetup{boxsize=1.5em}

\usepackage{cancel}
\usepackage[T1]{fontenc}\usepackage[utf8]{inputenc}
\usepackage{graphicx}

\usepackage{enumitem}

\newcommand\inv{{^{-1}}}

\usepackage{amsfonts,amssymb,amsthm}
\usepackage{amsmath,stmaryrd,url}
\usepackage{cleveref}

\usepackage{listings}
\lstdefinestyle{custom_python}{
  belowcaptionskip=1\baselineskip,
  frame=single,
  xleftmargin=\parindent,
  showstringspaces=false,
  keywordstyle=\bfseries,
  commentstyle=\itshape,
  tabsize=8,
  breaklines=true,
}
\lstset{ 
	basicstyle=\ttfamily,
	columns=fullflexible,
	frame=single,
	inputencoding=utf8,
	extendedchars=true,
	literate={é}{{\'{e}}}1 {è}{{\`{e}}}1 {à}{{\`{a}}}1 {ô}{{\^{o}}}1 {ù}{{\`{u}}}1 {°}{{\degre}}1 {É}{{\'{E}}}1 {À}{{\`{A}}}1,
	style=custom_python,
  mathescape=true,
}

\newcommand\Tau{{\mathcal T}}

\newcommand\Nu{{\mathcal V}}
\newcommand\Mu{{M}}

\newcommand\Cc{{\mathcal C}}
\newcommand{\Scal}{\ensuremath{\mathcal{S}}}
\newcommand\Pc{{\mathcal P}}
\newcommand\Fc{{\mathcal F}}
\newcommand\Xc{{\mathcal X}}
\newcommand\Yc{{\mathcal Y}}
\newcommand\Dc{{\mathcal D}}
\newcommand\Rc{{\mathcal R}}
\newcommand\Tc{{\mathcal T}}
\newcommand\Lc{{\mathcal L}}
\newcommand\Ec{{\mathcal E}}

\newcommand{\ud}{\underline{d}}
\newcommand{\ualpha}{{\underline{\alpha}}}

\newcommand{\ue}{\underline{e}}

\newcommand{\ulambda}{\underline{\lambda}}

\renewcommand\div{{\operatorname{div}}}
\newcommand\supp{{\operatorname{supp}}}

\newcommand\diag{{\operatorname{diag}}}
\newcommand\Id{{\operatorname{Id}}}
\newcommand\Aff{{\operatorname{Aff}}}
\newcommand\rk{{\operatorname{rk}}}

\newcommand\Kron{{\operatorname{Kron}}}
\newcommand\LR{{\operatorname{LR}}}
\newcommand\Span{{\operatorname{Span}}}
\newcommand\Wt{{\operatorname{Wt}}}
\newcommand\Lie{{\operatorname{Lie}}}
\renewcommand\det{{\operatorname{det}}}

\newcommand\lu{{\mathfrak u}}

\newcommand\ZZ{{\mathbb Z}}\newcommand\QQ{{\mathbb Q}}
\newcommand\CC{\mathbb C}\newcommand\NN{\mathbb N}
\newcommand\longto{\longrightarrow}
\newcommand\VV{{\mathbb V}}

\newcommand\Hom{{\operatorname {Hom}}}

\newcommand\Ker{{\operatorname {Ker}}}

\newcommand\GL{\operatorname{GL}}\newcommand\SL{\operatorname{SL}}

\newcommand\Mc{{\mathcal M}}

\newtheorem{theo}{Theorem}
\newtheorem{prop}[theo]{Proposition}
\newtheorem{lemma}[theo]{Lemma}

{}
\newtheorem{exple}[theo]{{\noindent\bf Example}}{}
\newtheorem{remark}[theo]{{\noindent\bf Remark}}{}


\newcommand{\N}{\mathbb{N}}

\newcommand{\Q}{\mathbb{Q}}
\newcommand{\C}{\mathbb{C}}

\newcommand{\rank}{\mathtt{rk}}

\newcommand{\commentaire}[1]{}


\renewcommand{\iff}{\Leftrightarrow}

\newcommand{\codim}{\mbox{codim}}

\DeclareMathOperator{\Pid}{Pid}

\newcommand{\footremember}[2]{%
    \footnote{#2}
    \newcounter{#1}
    \setcounter{#1}{\value{footnote}}%
}
\newcommand{\footrecall}[1]{%
    \footnotemark[\value{#1}]%
} 

\begin{document}
\title{An Algorithm to compute the Kronecker cone and other moment cones}
\author{Roland Denis}
\author{Michael Bulois}
\author{Michael Bulois \footremember{Ste}{Université Jean Monnet, CNRS, Centrale Lyon, INSA Lyon, Universite Claude Bernard Lyon 1, ICJ UMR5208, 42023 Saint-
Etienne, France.}
\and Roland Denis\footremember{Lyon}{Universite Claude Bernard Lyon 1, CNRS, Centrale Lyon, INSA Lyon, Université Jean Monnet, ICJ UMR5208, 69622
Villeurbanne, France.} \and Nicolas Ressayre
\footrecall{Lyon}}
\maketitle

\begin{abstract}
We describe a new algorithm that computes the minimal list of inequalities for the moment cone of any representation 
of a complex reductive group, with implementation details for two fundamental cases: 
the Kronecker cone (governing the asymptotic support of Kronecker coefficients) and the fermionic cone. 
These correspond to the actions of $\GL_{d_1}(\CC)\times\cdots\times \GL_{d_s}(\CC)$ 
on $\CC^{d_1}\otimes\cdots\otimes \CC^{d_s}$ and $\GL_d(\CC)$ on $\bigwedge^r\CC^d$, respectively.
An implementation for these two cases in Python-Sage is available at \url{https://ea-icj.github.io/}.

Our work overcomes the fundamental limitations that previously
restricted such computations to cases like $\CC^4\otimes\CC^4\otimes\CC^4$. 
The state-of-the-art method by Vergne-Walter faced two major bottlenecks: 
one from combinatorial geometry in finite-dimensional vector spaces, 
and another from deciding whether certain dominant morphisms are birational 
- a problem in effective algebraic geometry that lacked a direct algorithmic solution.
We surmount these obstacles by: 
a novel use of Weyl group actions to master combinatorial complexity, 
and an original algorithm for deciding birationality that replaces previous workarounds relying on convex geometry.

Our approach allow us to tackle problems at a new scale.
We compute the minimal list of 5,333 (up to $\mathfrak S_3$) inequalities for the Kronecker cone 
$\CC^6\otimes\CC^6\otimes\CC^6$ in 2 hours.
 Furthermore, a parallel implementation computes the 64,792 (up to $\mathfrak S_3$) inequalities 
 for $\CC^7\otimes\CC^7\otimes\CC^7$ in 188 hours.

\end{abstract}

\tableofcontents

\section{Introduction}
\subsection{Moment cone of a representation}
\label{sec:intro_cone}

Let $G$ be a complex connected reductive group acting linearly on a vector space $\VV$, 
inducing an action on the ring of regular functions $\CC[\VV]$.  
Our focus is on describing the irreducible $G$-representations that occur in this ring.

Let $T\subset B$ be a maximal torus and a Borel subgroup of $G$. 
The finite-dimensional irreducible representations of $G$ are parametrized 
by the set $X^+(T)$ of dominant characters of $T$.
For $\lambda\in
X^+(T)$, we denote by $V_G(\lambda)=V(\lambda)$ the  irreducible representation of
highest weight $\lambda$. Set
\begin{equation}
\Scal(\VV,G):=\Scal(\VV):=\{\lambda\in X^+(T) | V(\lambda)\subset \CC[\VV]\}.
\label{eq:defSP}
\end{equation}

Actually, $\Scal(\VV)$ is a finitely generated semigroup. As a consequence, the generated cone:
\begin{equation}
\Cc(\VV,G):=\Cc(\VV):=\QQ_{\geq 0}\cdot \Scal(\VV)\label{eq:defCP}
\end{equation}
is a closed convex polyhedral cone in $\QQ X(T)$.

A fundamental example arises when $G=\GL_d(\C)^3$ acts on pairs of matrices $\VV:=\Mc_d(\C)^2$ via
$$
(g_1,g_2,g_3).(A,B)=(g_1Ag_3\inv,g_2Bg_3\inv).
$$
Here, $\Scal(\VV)$ is the set of triples
$(\lambda^1,\lambda^2,\lambda^3)$ of partitions of length at most $d$
such that the Littlewood-Richardson coefficient $c_{\lambda^1\;
  \lambda^2}^{\lambda^3}$ is nonzero (see \cite{VW:ineq} or
\cite{PR:mp}).
The cone $\Cc(\VV)$ is the celebrated Horn cone, which characterizes the
eigenvalues of triples $(A,B,C)$ of $d\times d$ Hermitian matrices
satisfying $A + B = C$.
The study of this cone has a long story, starting in 1912 with H.~Weyl
\cite{Weyl} and continuing with \cite{Horn,Bel:c=1,Kly1,Kly2,KT,KTW}. 
See also \cite{Fulton:survey} for a comprehensive survey.

\bigskip
In general, the cone $\Cc(\VV)$ is  a moment cone. Their descriptions have a long
story whose we recall some steps. Kirwan proved the convexity in
\cite{Ki:convex}.
Berenstein-Sjamaar \cite{BerSja:Mumford} obtained the first explicit
list of inequalities characterizing the cones.
Belkale-Kumar \cite{BK} reduced the list of inequalities canceling
redundant inequalities. The last author proved in \cite{GITEigen} that
the Belkale-Kumar's list of inequalities is irredundant. 
Independent approaches may be found in \cite{PR:tools,PR:mp}. 

\subsection{Aim of the paper}

This paper presents an algorithm for computing the minimal list of inequalities defining the cones $\Cc(\VV)$.
We implemented it for two cases significant in quantum physics (see \cite{prog}):

\begin{enumerate}
\item {\bf Quantum Marginal problem} or {\bf Kronecker cone}. Consider $s$ finite dimensional vector spaces $V_1,\dots,V_s$
  and their tensor product
$$
\VV=V_1\otimes\cdots\otimes V_s,
$$
endowed with the natural action of $G=\prod_{i=1}^s\GL(V_i)$.
In this case, $\Scal(\VV)$ is the set of $s$-uples
$(\lambda^{1\,*},\dots,\lambda^{s\,*})$, where the $\lambda^i$'s are  partitions satisfying $\ell(\lambda^i)\leq\dim(V_i)$
such that the multiple Kronecker coefficient $g_{\lambda^1\;\dots
 \; \lambda^s}$ is nonzero (see \cite{VW:ineq} or \cite{PR:mp}).
 Here $\lambda^*$ stands for the duality involution characterized by $V(\lambda^*)=V(\lambda)^*$.
\item {\bf Fermions}. Let $V$ be a finite dimensional vector space and
  $2\leq r\leq \dim(V)-2$ an integer. Set $\VV=\bigwedge^rV$ endowed with the natural action of $G=\GL(V)$.
\end{enumerate}

Our implementation also works for the bosonic case when $\GL(V)$ acts on $S^r V$. 
In this case, the cone $\Cc(S^r V)$ is known to be trivial (see
\cite{MacTsanov}). Hence, this case allows us to check our implementation. 

\bigskip
The problem of characterizing the set of possible reduced density matrices,
known as the quantum marginal problem in quantum information theory and as the
$N$-representability problem in quantum chemistry \cite{LNC, Ruskai}, has therefore been considered as
one of the most fundamental problems in quantum theory \cite{NAP}.

In 1972, Borland and Dennis \cite{BD72} considered the fermionic cases
$\bigwedge^3\CC^6$, $\bigwedge^4\CC^7$ and $\bigwedge^4\CC^8$.
Klyachko established the first general algorithm for describing $\Cc(\bigwedge^r V)$ \cite{Kly,Kly4,AK}.
In 2004, Klyachko \cite{Kly} and Daftuar-Hayden \cite{DH} studied this
example for its interpretation in Quantum Physic and Quantum
Information Theory. 
Notably, the defining inequalities of $\Cc(\VV)$ may carry physical meaning: for instance,
the Pauli principle is one of the linear inequalities bounding the polytope of a fermionic
system.

\bigskip
Moment cones have also been studied from the perspective of Geometric
Complexity Theory (GCT). For example,
\cite{BCMW}  show that the problem of deciding membership in the
moment polytope associated with a finite-dimensional unitary
representation of a compact, connected Lie group is in
NP$\cap$coNP. And the Kronecker cone $\Cc(\VV)$ plays a central role
in this work. In contrast, it has recently been shown that deciding positivity of a single Kronecker coefficient is NP-hard, in general \cite{BCMW2}.

\bigskip
{\bf Existing computation.} 
Several authors give descriptions of the Kronecker cone $\Cc(\VV)$ by explicit
linear inequalities. Yet, these descriptions are quickly
computationally infeasible: explicit descriptions are known for $(\dim V_i)_i=(3, 3,3)$ \cite{Franz}, up to
formats $(3, 3, 9)$ \cite{Kly4}, $((2)^n)=(2,\dots,2)$ \cite{HSS} and $(4,
4, 4)$ \cite{VW:ineq}. 
In the very recent preprint \cite{Wal:computingmomentpolytopes}, the cones $\Cc(\overline{G.x})$ for 
$x\in \CC^3\otimes\CC^3\otimes\CC^3$ and $\CC^4\otimes\CC^4\otimes\CC^4$ are determined. 

Experts had considered further progress to be computationally very hard due to three main obstacles:
the combinatorial explosion in hyperplane generation, limitations in convex geometry computation (e.g., with Normaliz), 
and the absence of effective algorithms for deciding birationality in algebraic geometry.

\subsection{Main contributions}

For the Horn cone, combining \cite{Kly1,Bel:c=1,KTW}, 
one gets a description of the minimal list of inequalities. 
This description has been generalized to the tensor cone of any reductive group through the work of \cite{BK} 
and \cite{GITEigen}. 
Our algorithm builds on the framework established in \cite{PR:mp} (Theorem~\ref{th:Kron} below), 
which characterizes the minimal list of inequalities defining the moment cone of a representation.

Translating this theoretical description into an efficient algorithm presents several difficulties:
\begin{enumerate}
  \item {\it Enumerate all the hyperplanes of $\QQ X(T)$} spanned by some weights of $T$ in $\VV$. 
  
The difficulty here is combinatorial explosion. Our main contribution addresses this through an 
novel use of the Weyl group action.
The algorithm proceeds in two phases: first, we identify hyperplanes $H$ whose equations $\tau$ correspond 
to dominant regular one-parameter subgroups. 
A key insight allows us to leverage the fact that when such a hyperplane contains a given weight, 
one can determine the sign of $\langle\tau,\chi\rangle$ for other weights $\chi$. 
Second, we obtain dominant nonregular one-parameter subgroups through an inductive procedure. 

Our use of the Weyl group action proves essential to overcome the combinatorial explosion. 
Consider the Kronecker cone Kron(5,5,5): modulo $\mathfrak{S}_3$-symmetry, 
there are 92,425,934,835 hyperplanes spanned by weights. 
Exhaustively iterating over these hyperplanes is computationally prohibitive -- a simple Python loop 
like \lstinline{for i in range(92425934835): pass} already requires about half an hour. 
We reduce the full computation to under 10 seconds for Kron(5,5,5).

\item {\it Computing general isotropy dimension.} Here, the acting group is a maximal compact subgroup of $G$.
We use an inductive algorithm based on the slice theorem.

\item  {\it Enumerating all the Weyl group elements} whose inversion sets satisfy numerical conditions. 

Here, the difficulty lies in the complexity, because the Weyl groups are big. 
To overcome this difficulty we work directly on the inversion sets. 
A key point is a Kostant's result  that characterizes the inversion sets.
%
\item {\it Deciding Birationality.}  

We want to decide whether a given dominant regular map between affine spaces is birational. 
To solve this, we use a characterization of birationality via contraction of ramification divisors. 
This last step is the most innovative from a mathematical point of view and might be of independent interest in effective algebraic algebraic geometry. 
See Sections~\ref{sec:Step5} and \ref{sec:raf5} for more details. 

This birationality check is used to select from a redundant list of inequalities the irredundant one. 
Software of convex geometry, like Normaliz \cite{normaliz}, can in principle make this job.
However, our attempts to launch Normaliz with a redundant list of inequalities for Kron(5,5,5) proved unsuccessful due to insufficient memory.\footnote{
It is worth noting that Normaliz \emph{was} able to compute the 3,461 rays of Kron(5,5,5) 
from the minimal list of 2,730 inequalities generated by our program. 
This computation, however, required 82 hours and 60 GiB of memory on the 24 cores of a double Intel Xeon E5-2650 v4 and used 60 GiB of memory of available 128 GiB.}
Our program completes this step in about one minute. 

Beyond moment cones, our algorithm can have other applications. 
A notable example lies in computing structure constants $c_{uv}^w$ in the cohomology ring of complete 
homogeneous spaces $G/P$. Our method provides an effective procedure to determine whether these 
constants are $0,\, 1$, or greater.
\end{enumerate}

\bigskip
We also make use of a variant of the main result of \cite{BKR},  for which we provide a proof. 
The only difference from the original argument is that while \cite{BKR} uses the projectivity of the varieties 
involved, we use the assumption that $T$ has a unique closed orbit in $\VV$.

\subsection{Algorithm and results}

Our algorithm is implemented in \cite{prog} for the Kronecker, fermion and bosonic cases.  
It produces a set ${\mathcal I}$ of pairs $(\tau,w)$ where $\tau$ is a dominant one-parameter subgroup of $T$ 
and $w$ is an element of the Weyl group $W$ of $G$. 
In the output, the inequalities are the $w.\tau$'s. The computation proceeds through five sequential steps:

\begin{itemize}
  \item {\bf Step 1:} Generate an initial finite list $\Tau$ of $\tau$'s containing the list of the one-parameter subgroups 
  appearing in ${\mathcal I}$.  
  See Sections~\ref{sec:Step1} and \ref{sec:raf1}. 
  \item {\bf Step 2:} Filter some $\tau\in\Tau$ including by computing the dimension of a general isotropy. 
  See Section~\ref{sec:Step2}.
  \item {\bf Step 3:} For each remaining $\tau$, produce a set of $w$ satisfying combinatorial constraints necessary 
  to get $(\tau,w)\in{\mathcal I}$. See Sections~\ref{sec:Step3} and \ref{sec:raf3}.
  \item {\bf Step 4:} Filter some pairs requiring that some algebraic morphism $\pi_{\tau,w}$ is dominant. 
  See Sections~\ref{sec:Step4} and \ref{sec:raf4}.
  \item {\bf Step 5:} Select among the remaining pairs those for which $\pi_{\tau,w}$ is birational.  
  See Sections~\ref{sec:Step5} and \ref{sec:raf5}.
\end{itemize}

In addition to the main algorithm, we define three partial filters, namely ``BKR'' (Section~\ref{sec:BKR}), ``Linear Triangular'' (Section~\ref{sec:LinTri}) and ``Gröbner'' (Section~\ref{sec:Grobner}) --
which operate on some intermediate redundant lists of inequalities. 
These filters can either eliminate some redundant inequalities or detect some essential ones,
serving both to validate our implementation and accelerate the final refinement step.

To illustrate the algorithm's performance, we present complexity data for Kronecker 
cones with dimensions $(n,n,n)$ for $n=4,5,6$. All the data are given up to the obvious $\mathfrak S_3$-symmetry. 
A datum of the form $x\ (t\, $s$)$ means that this step produces $x$ outputs (that can be $\tau$'s or pairs $(\tau,w)$'s) in $t$ 
seconds. 
The times were obtained on a recent MacBook Air M1.
\bigskip

\setlength{\tabcolsep}{0.5\tabcolsep}
\noindent
\begin{tabular}{|c|c|c|c|c|c|c|c|}
  \hline
 dim &Step 1&Step 2&Step 3&Step 4&Step 5&Total\\
  \hline
$4\,4\,4$& 257 (0.6 s) & 42 (0.5s) &405 (0.2s) &230 (0.2s) &47 (3.8s)& 5.7s \\
  \hline
$5\,5\,5$&17\,057 (10s)&  267 (6.2s)& 9\,713 (8.4s) & 2\,261 (3.2s) & 462 (1m15s)&1m44s\\
  \hline
$6\,6\,6$&3\,369\,775&  2\,402& 277\,454 & 21\,715& 5\,333&\\
&(58 m)&(26 m)&(9m)&(1m47s)&(43m) &2h\\
\hline

\end{tabular}

\setlength{\tabcolsep}{6pt}
\bigskip
For comparison, the program \cite{VW:ineq} stops at step 4 and takes 5 min 25 s for the Kron(4,4,4) case, then delegates the redundant 
list of inequalities to a convex geometry program which extract the irredundant list in 2min. 

\bigskip 
These times are for sequential version of the implementation. 
A parallel version available in \cite{prog} reduces the Kron(6,6,6) computation to 28 minutes using the 24 cores of a double Intel Xeon E5-2650 v4 processors. It also enables us to compute the 64\,792 inequalities 
of the case Kron(7,7,7) in 188\,hours (176 for Step 1 and 12 for the others).  
Steps 2 to 5 were easy to parallelize, since they make independant tasks for each $\tau$ or 
for each pair $(\tau,w)$. 
Step 1 that runs recursively over a partial binary graph was more challenging to parallelize. 
It required deciding which nodes are worth running on independent processes. 

\bigskip
{\bf Organization of the paper.}
In Section~\ref{sec:th}, we state the fundational result for our algorithm.
In Sections~\ref{sec:Step0} to \ref{sec:Step5}, we explain the six
steps of our algorithm in a general setting. 
Only the basic algorithm is explained this first part. 
Details on certain optimizations are provided in Sections~\ref{sec:raf1} to \ref{sec:raf5} -- some general, others specific to
 Kronecker or fermionic cases.
Section~\ref{sec:3cones} presents mathematically the Kronecker, fermions and
bosons cones.  
Section~\ref{sec:exple} gives an example of inequality that can be consulted along the reading.
Sections~\ref{sec:BKR} and \ref{sec:2filters} describe three filters
that can be applied between Step~4 and Step~5. 
The first filter is based on an original adaptation of a
Belkale-Kumar-Ressayre's result \cite{BKR}, proved in Section~\ref{sec:BKR}.

\section{A theoretic description of the moment cone} 
\label{sec:th}

Our algorithm is based on a theorem that we recall here.
Fix a complex connected reductive group $G$, a maximal torus
$T$ and a Borel subgroup $B\supset T$. Let $U$ denote the unipotent
radical  of $B$, and $U^-$ its opposite.

Denote by $X_*(T)$ the group of one-parameter subgroups of $T$,
by $X_*^+(T)$ the set of dominant ones.
Recall that $X_*(T)$ is a lattice. An element $\tau\in X_*(T)$ is said
to be {\it indivisible} if $\tau=k\tau'$ for $\tau'\in X_*(T)$ and
$k\in\ZZ_{>0}$ implies $k=1$.
To any $\tau\in X_*^+(T)$ is associated a standard parabolic subgroup
$$
P(\tau)=\{g\in G\,:\,\lim_{t\to 0}\tau(t)g\tau(t\inv) {\rm\ exists\
  in\ }G\}\supset B
$$
with Levi subgroup
$$
G^\tau=\{g\in G\,:\,\forall t\in\CC^*\quad \tau(t)g=g\tau(t)\}
$$
and unipotent radical 
$$
U(\tau)=\{g\in G\,:\,\lim_{t\to 0}\tau(t)g\tau(t\inv)=e\}.
$$
Let $W=W_G$ be the Weyl group of $T$. 
For a dominant character $\lambda\in X^+(T)$, we set $\lambda^*=-w_0\lambda$, where $w_0$ is the longest element of $W_G$.
Denote by $W^{P(\tau)}\subset W$ the set of minimal length representatives of elements of $W_G/W_{G^\tau}$.

Denote by $\Phi$ the set of roots of $G$ and by $\Phi^+$ the set of
positive roots. Given $w\in W$, let $\Phi(w)=\Phi^+\cap w\inv \Phi^-$
denote the inversion set, that is the set of weights of the action of $T$ acting on
the Lie algebra $\Lie(w\inv U^-w\cap U)$. 

Let $\VV$ be  a $G$-representation. Let $\Wt(\VV)=\Wt(\VV,T)$ denote the set of
weights of the action of $T$ on $\VV$ in such a way that
$$
\VV=\oplus_{\chi\in \Wt(\VV)}\VV_\chi,
$$
where $\VV_\chi=\{v\in\VV\,:\,\forall t\in T\quad tv=\chi(t)v\}\neq\{0\}$.
For any $\chi\in\Wt(\VV)$, set $m_\chi=\dim(\VV_\chi)$ its multiplicity.
We denote by $\langle\cdot,\cdot\rangle$ the duality bracket between
$X_*(T)$ and $X(T)$. For $\tau\in X_*(T)$, set
$$
\VV^{\tau\leq 0}=\bigoplus_{\chi\in \Wt(\VV)\ s.t.\ \langle
  \tau,\chi\rangle\leq 0}\VV_\chi
\quad
{\rm and}
\quad
\VV^{\tau}=\bigoplus_{\chi\in \Wt(\VV)\ s.t.\ \langle
  \tau,\chi\rangle=0}\VV_\chi.
$$
Fix a maximal compact subgroup $K$ of $G$ such that $T\cap K$ is a
maximal torus of $K$. Set $K^\tau=G^\tau\cap K$.

\begin{theo}
  \label{th:Kron}
  We assume that the moment cone $\Cc(\VV)$ has nonempty interior in $X(T)\otimes\QQ$.
  Let $\lambda\in X^+(T)$. 

Then,
  $\lambda\in\Cc(\VV)$ if and only if
  \begin{equation}
    \label{eq:6}
    \langle w\tau, \lambda^*\rangle\leq 0,
  \end{equation}
  for any indivisible dominant one-parameter subgroup $\tau\in X^+_*(T)\setminus\{0\}$ and any $w\in W^{P(\tau)}$ such that
  \begin{enumerate}[label=(\Alph*)]
  \item\label{ass:Kron1}  the map
$$
\begin{array}{cccl}
  \pi=\pi_{\tau,w}\,:\,&(w\inv U^-w\cap U)\times \VV^{\tau\leq 0}&\longto&\VV\\
&(g,v)&\longmapsto&gv
\end{array}
$$ 
is birational;
\item\label{ass:Kron2}  
For any positive integer $l$, 
\begin{equation}
  \label{eq:wellcnum}
  \sum_{\begin{subarray}{c}
    \chi\in \Wt(\VV)\\
    \langle\tau,\chi\rangle=l 
  \end{subarray}
  }m_\chi = \sharp\{\beta\in \Phi(w)\,:\,\langle\tau,\beta\rangle=l\}.
\end{equation}
    \item \label{ass:Kron3} the general isotropy of $\Lie(K^\tau)$ on $\VV^\tau$ has real dimension one. 
    \end{enumerate}
    Moreover, this list of inequalities is irredundant, meaning that each inequality 
    defines a facet of $\Cc(\VV)$. 
\end{theo}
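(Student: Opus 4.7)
The plan is to derive Theorem~\ref{th:Kron} by combining a GIT reformulation of the moment cone with the Hilbert--Mumford numerical criterion and Ressayre's characterization of essential (``well-covering'') pairs.

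First I would translate $\lambda \in \Cc(\VV)$ into a semistability question: the membership $N\lambda \in \Scal(\VV)$ for some $N \geq 1$ is, via Frobenius reciprocity, equivalent to the non-emptiness of the semistable locus for an ample $G$-linearization of weight $-N\lambda$ on $\PP(\VV)\times G/B$. The non-empty interior hypothesis on $\Cc(\VV)$ guarantees that this GIT set-up is generically free enough for facet arguments to apply. Hilbert--Mumford's numerical criterion then reduces semistability to the non-positivity of the Mumford weight along every one-parameter subgroup. After conjugating by $G$ and moving the base point through Bruhat cells, this restricts attention to pairs $(\tau, w)$ with $\tau \in X_*^+(T)$ and $w \in W^{P(\tau)}$, producing a (highly redundant) list of inequalities of the shape~\eqref{eq:6}. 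The image of the map $\pi_{(\tau,w)}$ in~\ref{ass:Kron1} is precisely the closure of the $G$-saturation of the Bia\l ynicki--Birula piece meeting the fixed locus $\VV^\tau$, so the dominance of $\pi$ detects exactly the pairs whose inequality is sharp on some $G$-orbit.

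The core step is to extract the essential pairs. I would invoke the well-covering theorem of Belkale--Kumar and Ressayre (\cite{BK,GITEigen}), recast for affine $G$-representations in \cite{PR:mp}: a pair $(\tau, w)$ contributes a \emph{facet} of $\Cc(\VV)$ if and only if $\pi_{(\tau,w)}$ is birational and the corresponding face is genuinely of codimension one. The first part is exactly \ref{ass:Kron1}; the second, after using that the face of $\Cc(\VV)$ normal to $\tau$ is itself the moment cone of the $G^\tau$-action on $\VV^\tau$ shifted by $\tau$, is equivalent to the isotropy condition~\ref{ass:Kron3} on $\Lie(K^\tau)\curvearrowright\VV^\tau$. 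The numerical constraint~\ref{ass:Kron2} is the $\tau$-weighted dimension shadow of birationality: equating the $\tau$-graded dimensions of source and target of $\pi_{(\tau,w)}$ forces the two sides of~\eqref{eq:wellcnum} to coincide level by level, and conversely this per-level dimension matching is needed for $\pi$ to be even equidimensional with respect to the $\tau$-grading. Irredundancy then follows because distinct pairs $(\tau, w)$ satisfying \ref{ass:Kron1}--\ref{ass:Kron3} produce pairwise distinct facet normals: indivisibility fixes $\tau$ uniquely up to positive rescaling, and $w$ labels the Weyl chamber in which the normal is read.

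The principal obstacle is the well-covering step. Birationality of $\pi_{(\tau,w)}$ is a subtle geometric condition, and the equivalence between birationality, the facet-defining property of $\langle w\tau,\lambda\rangle\leq 0$, and the numerical/isotropy shadows~\ref{ass:Kron2}--\ref{ass:Kron3} is the real mathematical content; this is precisely where the interplay between the Bia\l ynicki--Birula decomposition, the GIT quotient of $\VV^\tau$ by $G^\tau$, and the tangent-space count enforced by~\eqref{eq:wellcnum} must be orchestrated carefully. The GIT reformulation and Hilbert--Mumford reduction are standard; the passage from a redundant Hilbert--Mumford list to the irredundant, well-covering-indexed list is the heart of the argument and is where I would lean most heavily on the machinery of \cite{GITEigen,PR:mp}.
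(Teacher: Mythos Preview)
The paper does not supply its own proof of Theorem~\ref{th:Kron}: immediately after the statement it simply records that ``the version presented here is proved in \cite{PR:mp}'' (with antecedents in \cite{Kly1,Bel:c=1,KTW,BK,GITEigen}). So there is no in-paper argument to compare against; the theorem is quoted as input to the algorithm.

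Your sketch is a fair outline of the strategy actually used in those references: recast membership in $\Cc(\VV)$ as GIT-semistability on a Borel-twisted space, apply Hilbert--Mumford to get a redundant family of inequalities indexed by pairs $(\tau,w)$, and then identify the essential ones via the well-covering formalism. A couple of points deserve tightening if you ever want to turn this into a self-contained proof. First, your explanation of \ref{ass:Kron2} as ``needed for $\pi$ to be even equidimensional with respect to the $\tau$-grading'' understates its role: \eqref{eq:wellcnum} is exactly the equality of $\tau$-graded dimensions of source and target of $\pi_{(\tau,w)}$, and without it $\pi$ cannot be birational at all, so \ref{ass:Kron2} is logically implied by \ref{ass:Kron1} rather than an independent shadow. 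Second, your irredundancy paragraph conflates two things: that distinct admissible pairs give distinct normals $w\tau$ (true, since $\tau$ is the unique dominant representative and $w\in W^{P(\tau)}$ is then forced), and that each such normal cuts out a genuine facet. The latter is the substantive claim, and it is precisely where \ref{ass:Kron3} enters: it ensures the face $\Cc(\VV)\cap\{w\tau=0\}$, which identifies with the moment cone of $G^\tau$ on $\VV^\tau$, has full codimension one. These are the points where the real work in \cite{GITEigen,PR:mp} lies, and you correctly flag them as the heart of the matter.
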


Like said in the introduction, the analogous Theorem~\ref{th:Kron} for the Horn cone can be deduced 
from \cite{Kly1,Bel:c=1,KTW}. Its extension to the tensor cone of any semisimple group is obtained by 
joining the main results of  \cite{BK,GITEigen}. The version presented here is proved in \cite{PR:mp}.

\section{Step 0: Compact stabilizers and nonempty interior hypothesis}
\label{sec:Step0}

In the next 6 sections, we describe our algorithm to compute the
inequalities defining the moment cones $\Cc(\VV)$. 
Our algorithm assumes (for simplicity) that $\Cc(\VV)$ has nonempty interior. This is equivalent to 
\begin{equation}\label{eq:C0}
\mbox{the general isotropy of } \Lie(K) \mbox{ on }\VV\mbox{ is trivial.}
\tag{$C_0$}
\end{equation}
Let us describe an algorithm that checks the condition~\eqref{eq:C0}. 

\label{sec:algopid}

There exists a subgroup $H\subset K$ and a dense open subset
$\Omega$ of $\VV$, such that, for any $v\in\Omega$, the stabilizer $K_v$
is conjugate to $H$. The subgroup $H$ (defined up to conjugacy) is
called the {\it principal isotropy of $\VV$}. See e.g. \cite[Section~I.5]{Dieck}.

The dimension of $H$ is also the generic dimension of the stabilizer in $\mathfrak k=\Lie(K)$. 
Call it the {\it principal isotropy dimension of $\VV$} and denote it by $\Pid(V,\mathfrak k)$.
We compute $\Pid(\VV,\mathfrak k)$ using the following well known consequence of the
slice Theorem \cite[Section~I.5]{Dieck}:

\begin{lemma}
  \label{lem:slice}
  Let $v\in \VV$. Consider $T_vK.v=\mathfrak k v$. Then 
  $$
\Pid(\VV,\mathfrak k)=\Pid(\VV/\mathfrak k v,\mathfrak k_v).
  $$
\end{lemma}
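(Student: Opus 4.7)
The plan is to prove this lemma as a direct consequence of the slice theorem for compact group actions. The slice theorem produces a local model of the $K$-action on $V$ near the orbit $K.v$, and principal isotropy is a generic local invariant, so we can read it off from this local model.

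More precisely, I would first choose a $K^v$-invariant inner product on $V$ (which exists because $K^v$ is compact) and set $N := (\mathfrak{k}.v)^\perp$. Since $K^v$ preserves $\mathfrak{k}.v$ (indeed $k\cdot(X\cdot v) = (\mathrm{Ad}(k)X)\cdot v$ for $k\in K^v$), $N$ is a $K^v$-subrepresentation of $V$, and as a $K^v$-module it is canonically isomorphic to the quotient $V/\mathfrak{k}.v$. The slice theorem then provides a $K$-equivariant diffeomorphism between a neighborhood of the zero section in the associated bundle $K\times_{K^v} N$ and a $K$-stable open neighborhood $U$ of the orbit $K.v$ in $V$.

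Next, I would compare isotropy groups through this identification. For a point of the form $[e,n]\in K\times_{K^v}N$, an element $k\in K$ stabilizes $[e,n]$ if and only if $k\in K^v$ and $k\cdot n = n$; hence $K^{[e,n]} = (K^v)^n$. By $K$-equivariance, every $K$-orbit in $U$ meets the slice $\{[e,n]\,:\,n\in N\}$, so the $K$-stabilizer of a generic point of $U$ has the same dimension as the $K^v$-stabilizer of a generic point of $N$. Since $U$ is a $K$-invariant open subset of $V$, the principal isotropy dimension of $V$ can be computed on $U$; hence $\Pid(V,\mathfrak k) = \Pid(N,\mathfrak k^v) = \Pid(V/\mathfrak k.v,\mathfrak k^v)$.

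The argument is essentially a direct invocation of the slice theorem, so no step is really an obstacle; the only subtle point is checking that $N$ is canonically identified with $V/\mathfrak k.v$ as a $K^v$-representation (independent of the auxiliary inner product), but this is clear from the short exact sequence $0\to\mathfrak{k}.v\to V\to V/\mathfrak{k}.v\to 0$ of $K^v$-modules, which splits by complete reducibility of $K^v$-representations. I would formulate this identification first so that the statement of the lemma makes literal sense, and then deploy the slice theorem as above.
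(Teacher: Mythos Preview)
Your proposal is correct and follows exactly the approach the paper intends: the paper does not give a proof but simply cites the lemma as a ``well known consequence of the slice Theorem'' (referencing \cite[Section~I.5]{Dieck}), and your argument supplies precisely those standard details. There is nothing to add.
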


Whence the following algorithm, quickly explained in the introduction of \cite{gozzi}, which only relies on linear algebra computations.

\begin{algorithm}[H]
    \caption{Computing $\Pid(\VV,\mathfrak k)$}
    \label{algo:Kstab}
    \begin{algorithmic}[1]
     \State Initialize $\mathfrak k':=\mathfrak k$ and $\VV':=\VV$.
     \While{$\mathfrak k'$ acts nontrivially on $\VV'$}
     \State Choose a point $v\in V'$ with nontrivial orbit, 
     \State Replace $\VV'$ (resp.  $\mathfrak k'$) by $\VV'/\mathfrak k'v$  (resp. $\mathfrak k'_v$)
\EndWhile
     \Return $\Pid$=$\dim \mathfrak k'$
   
    \end{algorithmic}
\end{algorithm}

The sequence $\dim(\mathfrak k')$ is decreasing during the while loop. Hence, the algorithm ends. 
Moreover, if $v$ is choosen randomly then, with theoretic probability one, the loop is run only once. 

\section{Step 1: A first finite set of \texorpdfstring{$\tau$}{tau}}
\label{sec:Step1}

\subsection{Aim}
\label{sec:step1aim}

Step 1 consists in producing a finite list of indivisible one-parameter subgroups 
containing those appearing in Theorem~\ref{th:Kron}.  Finitness steams from the following key observation.

\begin{lemma}
If $(\tau,w)$ satisfies \ref{ass:Kron3} then it satisfies 
\begin{equation}\label{eq:3weak}
\mbox{the hyperplane }  (X(T)_{\Q})^{\tau} 
  \mbox{ is spanned by }\Wt_T(\VV)^{\tau}
\tag{C'}
\end{equation}
where $(X(T)_{\Q})^{\tau}:=\{\chi\in X(T)\otimes \QQ\,:\,\langle\tau,\chi\rangle =0\}$ 
and $\Wt_T(\VV)^{\tau}=(X(T)_{\Q})^{\tau}\cap \Wt_T(\VV)$.
\end{lemma}
\begin{proof}
Consider now the stabilizer in 
$K\cap T\subset K^{\tau}$ of a general point in $\VV^{\tau}$. 
It is the intersection, in $K\cap T$, of the Kernels of the weights in $\VV^{\tau}$.
Thus, $\codim_{X(T)\otimes \Q} \Span(\{\chi\in \Wt(\VV)\,:\, \langle\tau,\chi\rangle=0\})= \Pid(\VV^{\tau}, K\cap T)$. 
Since the one-dimensional image of $\tau$ in $T$ already stabilizes $\VV^{\tau}$, condition \ref{ass:Kron3} implies that $\Pid(\VV^{\tau}, K\cap T)=1 $ and thus that \eqref{eq:3weak} holds.
 \end{proof}

\subsection{Some useful mathematical observations}
\label{sec:step1math}
\bigskip

Consider the $\QQ$-cone $X_*^+(T)_\QQ$ generated by the dominant one-parameter subgroups. 
Our method is to consider faces of this cone one after the other.
Fix such a nonzero face $\Fc$. Let $T_\Fc$ denote the torus generated by the images of the elements of $\Fc$. 
Note that $\Fc=X_*^+(T_{\Fc})_{\QQ}$. 
Let $p: X(T)_{\QQ}\rightarrow X(T_{\Fc})_{\QQ}$ denote the corresponding natural projection. 
In particular, $p(\Wt_T(\VV))=\Wt_{T_{\Fc}}(\VV)$.

\begin{lemma} 
  \label{lem:hypTTF}
  Let $\tau\in X_*(T_{\Fc})\setminus\{0\}$ satisfying \eqref{eq:3weak} for $T$, then $\tau$ satisfies   
  \eqref{eq:3weak} with $T$ replaced by $T_{\Fc}$.
\end{lemma}
\begin{proof}
For any $\chi\in X(T)$, we have $\langle \tau, \chi\rangle=\langle \tau, p(\chi)\rangle$ by definition of $p$. 
The image by $p$ of a hyperlane is either a hyperplane or the whole space $X(T_{\Fc})_{\QQ}$. 
Since $\tau\neq 0$, $\Wt_{T_{\Fc}}(\VV)^\tau$ cannot span $X(T_{\Fc})_{\QQ}$. 
Thus, $p(\Wt_T(\VV)^{\tau})=\Wt_{T_{\Fc}}(\VV)^{\tau}$ spans a hyperplane of $X(T_{\Fc})_{\QQ}$ which has to coincide with $X(T_{\Fc})^{\tau}$. 
\end{proof}

Define the order $\leq_\Fc$ on $X(T_{\Fc})$ by 
\begin{equation}
  \label{eq:defordF}
  \chi\leq_{\Fc} \chi'\iff \forall \tau\in \Fc \quad\langle\tau,\chi\rangle\leq \langle\tau,\chi'\rangle.
\end{equation}
The notation $\chi<_{\Fc} \chi'$ means $\chi\leq_{\Fc} \chi'$ and $\chi\neq \chi'$.

Our algorithm exploits the key following observation:

\begin{lemma}
  \label{lem:ordre}
  Let $\tau$ be dominant one-parameter subgroup of $T$ that belongs to the interior of $\Fc$. 
  Then, for any $\chi$ and $\chi'$ in $X(T_\Fc)$, we have
  $$
\chi<_\Fc \chi'\implies \langle\tau,\chi\rangle<\langle\tau,\chi'\rangle.
  $$
\end{lemma}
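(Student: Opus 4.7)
The statement is essentially a convex-geometry fact about the dual pairing once one unpacks the definitions, so my plan is to rephrase it in terms of the linear form $f := \langle \,\cdot\,, \chi' - \chi\rangle$ on $X_*(T_\Fc)\otimes\QQ$ and argue by contradiction.

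First I would observe that by definition of $\leq_\Fc$ the hypothesis $\chi \leq_\Fc \chi'$ says precisely that $f(\sigma) \geq 0$ for every $\sigma \in \Fc$. Suppose for contradiction that $f(\tau) = 0$ even though $\tau$ lies in the (relative) interior of $\Fc$. The key observation is that since $\tau$ is interior to $\Fc$, for any $\sigma \in \Fc$ there exists $\epsilon > 0$ small enough so that $\tau - \epsilon(\sigma - \tau)$ still belongs to $\Fc$; applying $f$ to this element and using $f(\tau) = 0$ and linearity yields $-\epsilon f(\sigma) \geq 0$, so $f(\sigma) \leq 0$. Combined with $f(\sigma) \geq 0$ this forces $f \equiv 0$ on $\Fc$.

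Next I would invoke the definition of $T_\Fc$: since $T_\Fc$ is the torus generated by the images of the elements of $\Fc$, the $\QQ$-span of $\Fc$ is all of $X_*(T_\Fc)\otimes\QQ$. Hence $f$ vanishes identically on $X_*(T_\Fc)\otimes\QQ$. But $f$ is the pairing with $\chi' - \chi \in X(T_\Fc)$, and the natural pairing between $X_*(T_\Fc)\otimes\QQ$ and $X(T_\Fc)\otimes\QQ$ is non-degenerate, so $\chi' - \chi = 0$, contradicting $\chi <_\Fc \chi'$.

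I do not expect any real obstacle here: the only subtle point is being careful that ``interior of $\Fc$'' is meant relative to the linear span of $\Fc$, which is exactly $X_*(T_\Fc)\otimes\QQ$, and that is guaranteed by the very definition of $T_\Fc$. With that in hand, the argument above gives the strict inequality $\langle \tau, \chi\rangle < \langle \tau, \chi'\rangle$.
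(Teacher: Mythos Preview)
Your proof is correct. The paper does not actually supply a proof of this lemma: it is introduced with ``Observe that the following holds'' and no argument is given afterwards, so the authors treat it as an immediate observation. What you have written is precisely the standard convex-geometry justification one would give --- a linear functional that is nonnegative on a cone and vanishes at a relative-interior point must vanish on the whole cone, hence on its linear span --- together with the observation that the span of $\Fc$ is $X_*(T_\Fc)\otimes\QQ$ by the very definition of $T_\Fc$. Nothing is missing.
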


Let $\Tau^{++}(\Fc)$ denote the set of indivisible $\tau$ in the interior of $\Fc$ satisfying 
\eqref{eq:3weak} for $T_{\Fc}$. 
Any $\tau\in \Tau^{++}(\Fc)$ induces a partition of $\Wt_{T_\Fc}(\VV)$ into three subsets 
$S_-^\tau$, $S_0^\tau$ and $S_+^\tau$  according to the vanishing and the sign of $\langle \tau,\chi\rangle$. 
Condition \eqref{eq:3weak} for $T_{\Fc}$ states that $S_0^{\tau}=\Wt_{T_{\Fc}}(\VV)^{\tau}$ spans a hyperplane of $X(T_{\Fc})_{\QQ}$.

For each fixed face $\Fc$, the algorithm~\ref{algo:Sreg} below produces a collection of subsets $S^1,\dots,S^s$ of $\Wt_{T}(\VV)$ such that
$$
\forall \tau\in \Tau^{++}(\Fc)\qquad \exists i \quad S_0^\tau=S^i.
$$
Our algorithm exploits the fact that if $\chi_0\in S_0^\tau$ then 
\begin{itemize}
  \item $\forall \chi'<_\Fc\chi\qquad \chi'\in S_-^\tau$;
  \item $\forall \chi'>_\Fc\chi\qquad \chi'\in S_+^\tau$.
\end{itemize}
So each time a given character is put in a candidate hyperplane $S_0$ we can decide the position of some 
other ones relatively to the partition. 

Moreover, Lemma~\ref{lem:hypTTF} allows us to work with weights of $T_\Fc$ 
in place of weights of $T$. 

\subsection{Algorithm}

The following pseudo-code constructs recursively the sets $S_0, S_+,S_-, S_{\neq0}$ and $S_{?}$ partitioning $\Wt_{T_{\Fc}}(\VV)$, with a selection of $S_0$ each time it spans a hyperplane.
Here at each step, $S_0$ is the set of weights we decided to put in the hyperplane, $S_{\pm}$ are the weights known 
to be on one side of the hyperplane, $S_{\neq0}$ are the weights only known to not belong to the hyperplane 
and $S_?$ are the indeterminate ones.

\begin{algorithm}[H]
    \caption{Computation of candidates orthogonal to $\tau\in\Tau^{++}(\Fc)$}
    \label{algo:Sreg}
    \begin{algorithmic}[1]
       \Procedure{RECURS\_S($S_0$ ; $S_+$ ; $S_-$ ; $S_{\neq0}$ ; $S_{?}$)\label{proc:rec}}{}  
      \If{$\codim \Span (S_0)=1$}\label{line:Cp}
       \Return{$\mathcal S=\{S_0\}$} \Comment{$S_0$ is a candidate}
     \ElsIf{$S_?=\emptyset$ or $\sharp( S_0\cup S_{?})<\dim T-1$}
      \Return{$\mathcal S=\{\}$} 
      \Statex\Comment{No hyperplane can be generated by some $S'$ with $S_0\subset S'\subset S_0\cup S_?$}       
    \Else
        \State{Choose $\chi\in S_{?}$} \label{line:choose_chi}
       \State $\mathcal S$=RECURS\_S($S_0$ ; $S_+$ ; $S_-$ ; $S_{\neq0}\cup\{\chi\} $ ;  $S_?\setminus\{\chi\}$)   
       \Statex\Comment{We explore the branch where $\chi \notin S_0$}    
\State{\label{line:lem:ordre}Transfer weights satisfying $\chi'>\chi$ (resp. $\chi'<\chi$) from $S_?$ or $S_{\neq0}$ to $S_+$ (resp. to $S_-$) }     
\Statex\Comment{When $\chi\in S_0$, we apply Lemma~\ref{lem:ordre}}
    \State\Return{$\mathcal S\cup$ RECURS\_S($S_0\cup\{\chi\}$ ; $S_+$ ; $S_-$ ; $S_{\neq0} $ ; $S_?\setminus\{\chi\}$)} 
    \Statex\Comment{We explore the branch where $\chi \in S_0$}    
    \EndIf
    
     \EndProcedure 
       \State{$\mathcal S_{reg}$=RECURS\_S($S_0=\emptyset$ ; $ S_+=\emptyset$ ; 
       $S_-=\emptyset$ ; $S_{\neq0}=\emptyset$ ; $S_{?}=\Wt_{T_\Fc}(\VV)$)}
    \end{algorithmic}
\end{algorithm}

In order to compute $\Tau^{++}(\Fc)$, we proceed as follows. For each produced $S_0$ generating a hyperplane, we 
compute the two indivisible generators $\pm \tau$ of the orthogonal of $S_0$. Then we select a given one-parameter subgroup $\tau$ only if:
\begin{itemize}
\item $\tau$ is dominant and belongs to the interior of $\Fc$.
\item $\tau$ does not appear as a previously computed one-parameter 
subgroup (different $S_0$ might produce the same hyperplane).
\item $\tau$ satisfies condition \eqref{ass:Kron3} for $T$ (the computation made 
on line \ref{line:Cp} only checks the weaker condition \eqref{eq:3weak} for $T_{\Fc}$).
\end{itemize}

\section{Step 2: Filtering the list of \texorpdfstring{$\tau$}{tau}}
\label{sec:Step2}

Thanks to Step~1, we are now left with a finite set of dominant  $\tau$, 
candidates to appear in inequalities of Theorem~\ref{th:Kron}.
In this step, we filter more precisely those candidates by checking an intermediate condition \eqref{eq:2weak}
 and full Condition~\ref{ass:Kron3}. 
 
The following lemma is clear:
\begin{lemma}
  \label{lem:dim}
  Condition~\ref{ass:Kron2} implies that $\VV^{\tau>0}$ is isomorphic
  as a $\tau(\CC^*)$-module to some submodule of $\Lie(U(\tau))$: for any positive $l$
\begin{equation}  
\label{eq:2weak}
  \sharp\{\chi\in\Wt(\VV)\,:\,\langle\chi,\tau\rangle=l\}\leq
  \sharp\{\beta\in\Phi^+\,:\,\langle\beta,\tau\rangle=l\}. \tag{B'}
  \end{equation}
\end{lemma}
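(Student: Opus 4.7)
The plan is to identify both $\VV^{\tau>0}$ and $\Lie(U(\tau))$ as graded $\tau(\CC^*)$-modules and show that the grading piece-by-piece inequality drops out immediately from \ref{ass:Kron2} once we use $\Phi(w)\subset\Phi^+$.

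First I would set up the $\tau(\CC^*)$-weight decompositions. Since $\tau$ is dominant and $U(\tau)$ is the unipotent radical of $P(\tau)$, we have
$$\Lie(U(\tau))=\bigoplus_{\beta\in\Phi^+,\ \langle\tau,\beta\rangle>0}\mathfrak g_\beta,$$
and the $l$-th $\tau(\CC^*)$-isotypic component has dimension exactly $\sharp\{\beta\in\Phi^+:\langle\tau,\beta\rangle=l\}$ (because each root space is one-dimensional). On the other side, by the definition of $\VV^{\tau>0}=\bigoplus_{\chi:\langle\tau,\chi\rangle>0}\VV_\chi$, the dimension of its $l$-th weight space is $\sum_{\chi:\langle\tau,\chi\rangle=l}m_\chi$.

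Next I would invoke \ref{ass:Kron2}. By assumption there exists $w\in W^{P(\tau)}$ such that for every positive integer $l$,
$$\sum_{\chi\in\Wt(\VV),\ \langle\tau,\chi\rangle=l}m_\chi\ =\ \sharp\{\beta\in\Phi(w):\langle\tau,\beta\rangle=l\}.$$
The containment $\Phi(w)=\Phi^+\cap w^{-1}\Phi^-\subset\Phi^+$ is immediate from the definition, so the right-hand side is bounded above by $\sharp\{\beta\in\Phi^+:\langle\tau,\beta\rangle=l\}$. Combining the equality with this inclusion gives \eqref{eq:2weak} directly.

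Finally I would translate the numerical inequality back into a module statement: two $\tau(\CC^*)$-modules admit an equivariant embedding one into the other precisely when their graded dimensions satisfy pointwise $\leq$ inequalities. Since this is exactly what we just established for every positive $l$, the $\tau(\CC^*)$-module $\VV^{\tau>0}$ embeds into $\Lie(U(\tau))$. There is no real obstacle here: the content is entirely the observation that \ref{ass:Kron2} matches weights with elements of $\Phi(w)$ and that $\Phi(w)$ consists of positive roots on which $\tau$ is positive.
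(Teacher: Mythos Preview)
Your proof is correct and spells out exactly the argument the paper omits (the paper simply declares the lemma ``clear'' and gives no proof). One tiny remark: you prove the stronger inequality $\sum_{\chi:\langle\tau,\chi\rangle=l}m_\chi\leq\sharp\{\beta\in\Phi^+:\langle\tau,\beta\rangle=l\}$, which is precisely the module-embedding statement and of course implies the displayed inequality \eqref{eq:2weak} (written with $\sharp$ on the left) since each $m_\chi\geq 1$.
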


Note that Condition \eqref{eq:2weak} depends only  on $\tau$ and is easily checked combinatorially for each $\tau$.

Condition \ref{ass:Kron3} is then checked by applying Algorithm \ref{algo:Kstab} to $\VV^{\tau}$ and $\mathfrak k^{\tau}$.

\section{Step 3: Generate the elements \texorpdfstring{$w\in W^{P(\tau)}$}{WP} satisfying
  \texorpdfstring{Condition~\ref{ass:Kron2}}{(Condition ass:Kron2)}}
\label{sec:Step3}

From the previous steps, we have selected one-parameter subgroups $\tau$ satisfying Conditions~\ref{ass:Kron3}
 and \eqref{eq:2weak}. For each such $\tau$, we now list the $w\in W^{P(\tau)}$ such that the pair
$(\tau,w)$ satisfies the full condition~\ref{ass:Kron2} of Theorem~\ref{th:Kron}. 

The naive path consisting of enumerating all the elements of $W^{P(\tau)}$ is time consuming. \emph{e.g.}, for $d=(6,6,6)$ there are $(6!)^3\simeq 3 \times 10^8$ elements to consider in $W^{P(\tau)}$ for each regular $\tau$. A simple basic enumeration of the form \lstinline{for ($\tau$,$w$): pass} already requires more than an hour in Python for this case, time to be compared to the $9$ minutes spent by our program. 

In our implementation, we follow another approach. An element $w$ is characterized by its inversion set $\Phi(w)$ ($\subset \Phi^+$) which is subject to strong constraints:

\begin{enumerate}
  \item $\Phi(w)$ is convex  
 (that is, if $\alpha,\beta\in \Phi(w)$ and $\alpha+\beta\in \Phi$ then $\alpha+\beta\in \Phi(w)$).
  \item $\Phi(w)$ is coconvex in $\Phi^+$
  (that is, for any $\alpha,\beta\in \Phi^+$ such that $\alpha,\beta\not\in \Phi(w)$ and $\alpha+\beta\in \Phi$ then $\alpha+\beta\not\in \Phi(w)$).

  \item If $\alpha\in \Phi(w)$, $\beta\in \Phi^{-}(G^{\tau})$ and $\alpha+\beta \in \Phi^+$ then $\alpha+\beta\in \Phi(w)$.
  \item For any positive integer $l$ the cardinality of $\Phi(w)^{\tau=l}:=\{\alpha\in\Phi(w)\,:\,\langle\tau,\alpha\rangle=l\}$ 
  is determined by $\dim \VV^{\tau=l}$, the LHS of condition \eqref{eq:wellcnum} (which does not depend on $w$).
\end{enumerate}
The first two statements are consequences of the (co-)convexity of $w^{-1}U^{-}w$ in $\Phi$. The third one follows from the hypothesis $w\in W^{P(\tau)}$.
The fact that these conditions exactly characterize the inversion sets of the elements of $W^{P(\tau)}$ is \cite[Proposition~5.10]{Kost}.

These conditions can be used to perform a tree-based search on the set of possible inversion sets $\Phi(w)$. We sketch such a search in the algorithm \ref{algo:W} below. Here at each step, $\Phi_y$ (resp. $\Phi_n$) is the set of roots we assume to lie (resp. to not lie) in a candidate $\Phi(w)$ while $\Phi_?$ are the indeterminate ones.

\begin{algorithm}[H]
    \caption{Computation of candidates $\Phi(w)$}
    \label{algo:W}
    \begin{algorithmic}[1]
       \Procedure{RECURS\_PHI($\Phi_y$ ; $\Phi_n$ ; $\Phi_{?}$)\label{proc:recphi}}{}  
      \If{$\Phi_y$ satisfies constraints 1 to 4}\label{line:Cp_Phi}
       \Return{$\mathcal S=\{\Phi_y\}$} 
    \Statex\Comment{$\Phi_y$ is a possible inversion set}
     \ElsIf{$\Phi_?=\emptyset$ or $\sharp (\Phi_y)_{\tau=l}>\dim \VV_{\tau=l}$ or $\sharp (\Phi_y)_{\tau=l}+\sharp (\Phi_?)_{\tau=l}<\dim \VV_{\tau=l}$}
      \Return{$\mathcal S=\{\}$} 
      \Statex\Comment{No inversion set $\Phi'$ with $\Phi_y\subset \Phi'\subset \Phi_y\cup \Phi_?$ satisfies condition 4}       
    \Else
        \State{Choose $\alpha\in \Phi_{?}$} \label{line:choicealpha}
       \State{Set $\Phi_n'=\Phi_n\cup\{\alpha\} $} \Comment{We explore the branch where $\alpha \notin \Phi_y$}
       \State{Set $\Phi_n'=\Phi_n'\cup ((\alpha+\Phi_n)\cap \Phi^+)$}\Comment{We make use of constraint 2} \label{line:phin}
       \State $\mathcal S$=RECURS\_PHI($\Phi_y$ ; $\Phi_n' $ ;  $\Phi_?\setminus \Phi_n'$)
       \State{Set $\Phi_y'=\Phi_y\cup\{\alpha\} $} \Comment{We explore the branch where $\alpha \in \Phi_y$}
       \State{Set $\Phi_y'=\Phi_y'\cup ((\alpha+\Phi_y)\cap \Phi^+)\cup((\alpha+\Phi^{-}(G^{\tau}))\cap \Phi^+)$ } \label{line:phiy}
       \Statex\Comment{We make use of constraints 1 and 3}
       \State \Return{$\mathcal S\cup$ RECURS\_PHI($\Phi_y'$ ; $\Phi_n $ ;  $\Phi_?\setminus \Phi_y'$)}
    \EndIf
     \EndProcedure 
       \State{$\mathcal S_{reg}$=RECURS\_PHI($\Phi_y=\emptyset$ ; $ \Phi_n=\emptyset$ ; 
       $\Phi_{?}=\Phi^+$)}
    \end{algorithmic}
\end{algorithm}

We explain in Section \ref{sec:raf3} how constraints 1-3 can be interpreted in the type A case in order to handle efficiently the combinatorial operations of lines \ref{line:phin} and \ref{line:phiy} and in which order choosing the $\alpha$'s (line \ref{line:choicealpha}).
\section{Step 4: Select dominant \texorpdfstring{$\pi$}{pi}}
\label{sec:Step4}

\subsection{Aim}
From the obtained pairs $(\tau,w)$ satisfying Conditions~\ref{ass:Kron2} and \ref{ass:Kron3}  of
Theorem~\ref{th:Kron},
we now want to keep those satisfying the
following weaker version of \ref{ass:Kron1}:

\begin{equation}
  \label{eq:pidom}
  \pi_{\tau,w}\mbox{ is dominant.} \tag{A'}
\end{equation}

\subsection{Mathematical background}

Given a morphism $f\,:\,X\longto Y$ and a point $x\in X$, we denote by
$T_xf\,:\,T_xX\longto T_{f(x)}Y$ the tangent map between the tangent
spaces. The following statement is well known (see e.g. \cite[\S VII.5]{AltK}), 

\begin{lemma}
  \label{lem:domTpi}
  Let $f\,:\,X\longto Y$ be a morphism between two 
   irreducible smooth
  complex varieties of the same dimension.
  Set
  $$
  X^\circ_f:=\{x\in X\,:\,
T_xf \mbox{ is invertible}
\}.
  $$
  Then
  \begin{enumerate}
  \item $X^\circ_f$ is open in $X$;
    \item $X^\circ_f$ is nonempty if and only if $f$ is dominant. 
  \end{enumerate}
\end{lemma}

Let $\tau$ be a dominant one-parameter subgroup and $w\in
W^{P(\tau)}$.
Set $\lu(\Phi(w)):=\Lie(w\inv U^-w\cap U)$. 
For any positive $\ell$, define $\VV^{\tau=\ell}$ to be $\{v\in \VV\,:\,
\tau(t)v=t^{\ell}v\quad\forall t\in\CC^*\}$, and $\lu(\Phi(w))^{\tau={\ell}}$
similarly.

\begin{lemma}
  \label{lem:wellc}
  Let $\tau$ be a dominant one-parameter subgroup and $w\in
  W^{P(\tau)}$ such that Condition~\ref{ass:Kron2} of Theorem~\ref{th:Kron} holds and let $\pi=\pi_{\tau,w}$ be as in Theorem~\ref{th:Kron}.
Then
\begin{enumerate}
\item \label{ass:wellc1}
  $\pi$ is dominant if and only if there exists $x\in \VV^\tau$
  such that $T_{(e,x)}\pi$ is invertible.
\item \label{ass:wellc2} For any $x\in \VV^\tau$, the linear map $T_{(e,x)}\pi$ is block
  diagonal. More precisely, it satisfies
  $$
T_{(e,x)}\pi(\lu(\Phi(w))^{\tau=\ell})\subset \VV ^{\tau=\ell}\quad\forall
\ell>0
$$
and, it restricts to $\VV^{\tau\leq 0}$ as the identity.
\end{enumerate}
\end{lemma}

\begin{proof}
We have an action of the group $U(\Phi(w)):=(w^{-1}U^{-}w\cap U)$ on $(U(\Phi(w))\times \VV^{\tau\leq 0}$  by left multiplication on the first factor. It also acts naturally on $\VV$. With respect to these two actions, $\pi$ is $U(\Phi(w))$-equivariant. 
In particular, the geometry of $\pi$ at a point $(u,x)$ is isomorphic to the geometry of $\pi$ at the point $(e, u^{-1}x)$.

The differential map at $(e,x)$ satisfies $T_{(e,x)}\pi (\mathfrak u(\Phi(w))\times \VV^{\tau\leq0})=\mathfrak u(\Phi(w))\cdot x+\VV^{\tau\leq0}$. 
Moreover, the weight decomposition restricted to $\tau$ yields that for $x_{\ell}\in \VV^{\tau=\ell}$, we have  $\lu(\Phi(w))^{\tau=\ell'}\cdot x_{\ell}\subset \VV^{\tau=\ell+\ell'}$.

Now let $x=\sum_{\ell\leq0} x_{\ell}\in V^{\tau\leq0}$ with each $x_{\ell}\in \VV^{\tau=\ell}$. From hypothesis \ref{ass:Kron2} we see that $T_{(e,x)}\pi$ is a block triangular map whose diagonal square blocks represent the linear maps $\lu(\Phi(w))^{\tau=\ell'}\rightarrow\VV^{\tau=\ell'}$, $u\mapsto u\cdot x_0$ for $\ell'>0$ with one extra block corresponding to the identity map on $V^{\tau\leq 0}$.
In particular, $T_{(e,x)}\pi$ is invertible if and only if each diagonal square block is invertible.
\end{proof}

\subsection{Application to the algorithm}\label{subsec:algo_dom}

In order to select the pairs $(\tau,w)$ such that $\pi_{\tau,w}$ is dominant, we check if $\pi$ satisfies Assertion~\ref{ass:wellc1} of
Lemma~\ref{lem:wellc}.
The second assertion of this lemma allows us to make this checking more
efficient.

To check generic invertibility of $T \pi$ can use two methods:
\begin{itemize}
\item {\bf Probabilistic.}
  Pick a random point $x_0$ in $\VV^\tau$ and compute the rank $r_0$ of
  $T_{x_0}\pi$. With theoretic probability one, $x_0$ lie in the open subset where $r_0$ is  the maximal possible rank of
  $T_x\pi$. Hence, with theoretic probability
  one, $\pi$ is dominant if and only if $r_0=\dim(\VV)$.

  Assertion~\ref{ass:wellc2} of Lemma~\ref{lem:wellc} allows us to speed
  up the checking of the invertibility of the matrix of $T_{x_0}\pi$ by checking ranks of smaller matrices.

  Observe that whenever this method finds an invertible differential at some point, then $\pi$ is indeed dominant. However, we may  encounter points where the differential has lower rank with $\pi$ is dominant
  nevertheless.

In order to maximize the reliability of this method in our computation, we launched the computation several times
for each pair $(\tau,w)$. 

Here, randomness has been realized by choosing coordinates randomly evenly in $\pm[\![1, 10 000 ]\!]\pm I [\![1, 10 000 ]\!]$ in a given basis.

 \item {\bf Symbolic.} Here we make the same thing as in the
probabilistic method with $x_0$ the generic point of $\VV^\tau$. 
The computation of $T_{x_0}\pi$ and its rank are done in the rational function field $\QQ(V)$. 
Now, we get
   $$
   \rank(T_{x_0}\pi)=\dim(\VV)\iff
   \pi\mbox{ is dominant.}
   $$
   This method is deterministic but slower. For example, in symbolic this steps takes 5s (vs 0.4s in probabilistic) 
   for Kron(4,4,4) 
   and 1h13 (vs 1s) for Kron(5,4,4).
\end{itemize}

\section{Step 5: Select birational \texorpdfstring{$\pi$}{pi}}
\label{sec:Step5}

\subsection{Aim}
     
At this step, we have the list of inequalities as in
Theorem~\ref{th:Kron} excepted that the condition {\it $\pi$ birational} has
been replaced by the weaker one {\it $\pi$ dominant}. 
It is well known (see e.g. \cite{VW:ineq}) that the current list of inequalities
characterizes the cone $\Cc(\VV)$ but contains redundant
inequalities. For example, at this step, the Kronecker cone $\Cc(\VV)$
for $(4,4,4)$ is characterized by 1260 inequalities (230 up to
$\mathfrak S_3$-symmetry plus 3 dominancy inequalities) while the minimal list has 270 inequalities (47  up to
$\mathfrak S_3$-symmetry plus dominancy inequalities).

Software of convex geometry like
Normaliz~\cite{normaliz} allows you to select the irredundant
inequalities. However, our attempts to use it for Kron(5,5,5) proved unsuccessful due to insufficient memory.
We propose another algorithm based on Lemma~\ref{lem:RamContract}.
In the manageable cases, our algorithm is also faster: for Kron(4,4,4), from the 1260 inequalities from Step 4,
it takes 2 min 19 s to Normaliz to extract the 270 irredundant ones. It took less than 4 seconds for our algorithm to extract these inequalities. 
Note that even if Normaliz might also provide additional information like extreme rays of the cone, 
it works faster when fed directly with the list of irredundant inequalities: less than 1s for Kron(4,4,4). 

\bigskip
So, we fix a pair $(\tau,w)$ such that $\pi$ is dominant and
Condition~\ref{ass:Kron2} holds. Our Step 5 allows us to decide whether
$\pi$ is birational or not.

\subsection{Mathematical background}

\subsubsection{Birational maps and their ramification divisors}
\label{subsec:ramif_div}

Let $\pi\,:\,X\longto Y$ be a dominant morphism between two irreducible
varieties $X$ and $Y$ with common dimension $N$.
Assume that $X$ is smooth in codimension one.
Let $X^{s}$ denote its smooth locus.
The determinant $J$ of $T\pi$ is a section of the determinant line bundle on $X^s$
$$
\Dc:=\bigwedge^NT^*X^s\otimes \pi^*(\bigwedge^NTY).
$$
Its zero locus $\div(J)$ is a Cartier divisor on $X^s$. The closure
$R_\pi$ of $\div(J)$ is a well-defined Weyl divisor on $X$, called the
{\it ramification divisor of $\pi$}. 
Let $\supp(R_\pi)$ denote its support: it is a closed reduced subset
of $X$. 

A divisor $D$ on $X$ is said to be {\it contracted (by $\pi$)} if the closure  
$\overline{\pi(\supp(D))}$ has codimension at least two in $Y$. 
    
    \begin{lemma}
      \label{lem:RamContract}
      Let $\pi\,:\,X\longto Y$ be a dominant morphism with $X$ and $Y$ of the same dimension. 
      Assume also that $X$ is smooth in codimension one.
      Then, the following two assertions are equivalent:
      \begin{enumerate}[label=(\roman*), ref=(\roman*)]
      \item \label{ass:rc1} $R_\pi$ is contracted;
        \item \label{ass:rc2} For any $x\in
          \supp(R_\pi)$ the kernel of $T_x\pi$ intersects nontrivially $T_x\supp(R_\pi)$.
      \end{enumerate}
If, moreover $\pi$ is proper, $Y$ is smooth and simply connected then the
      previous assertions are also equivalent to:
      \begin{enumerate}[label=(\roman*), ref=(\roman*), start=3]
          \item \label{ass:rc3} $\pi$ is birational.
      \end{enumerate}
    \end{lemma}
    
    \begin{proof}
The condition of assertion~\ref{ass:rc2} is closed in $x$ so it can equivalently be checked for $x$ general 
in each irreducible components $Z$ of $\supp(R_\pi)$. 
On such a point, the condition means that $T_x(\pi_{|Z})$ is not injective. 
By Lemma~\ref{lem:domTpi} applied to $\pi_{|Z}: Z\rightarrow \pi(Z)$, this amounts to $\dim \pi(Z)<\dim Z$. 
Hence, assertions \ref{ass:rc1} and \ref{ass:rc2} are equivalent.

Assuming \ref{ass:rc3}, the ramification divisor is contracted by main's Zariski (see \cite[Chap~III Section~9 Proposition~1]{Mumford:red}).

Conversely, we reproduce the argument of \cite[Proposition~12]{FR:BKGB} in our slightly different context.  
Using the Stein factorization \cite[\href{https://stacks.math.columbia.edu/tag/03GX}{Tag 03GX}]{stacks-project}, we may
  assume that $\pi$ is finite. 
  Since $\pi$ is proper and dominant, it is surjective.
  Let $Z$ denote the singular locus of $X$.
  Then $\pi$ is a covering from $X^s\backslash
  \supp(R_\pi)$ onto $Y\backslash \pi(\supp(R_\pi)\cup Z)$.
 Then, since  
 $\pi(\supp(R_\pi)\cup Z)$ has codimension at least two in $Y$, the
  fundamental groups of $Y$ and $Y\backslash \pi(\supp(R_\pi)\cup Z)$ coincide,
  and $Y\backslash \pi(\supp(R_\pi)\cup Z)$ is simply connected.
  Thus, $\pi$ has degree one and the assertion is proved.
    \end{proof}

\subsubsection{Checking contraction of divisors}

Let $E$ and $V$ be two vector spaces of the same dimension $N$ and
let $\pi\,:\,E\longto V$ be a dominant morphism. 
We need an effective criterion to decide if the ramification divisor $R_\pi$ is
contracted.

Fix bases of $E$ and $V$, and let $M(x)$ be the matrix of $T_x\pi$ in
these bases. Then $M$ is a matrix with entries in $\CC[E]$. 

Let $J\,:\,E\longto\CC,x\longmapsto \det(T_x\pi)$ be the Jacobian determinant of $\pi$. 
It is an element of the UFD ring $\CC[E]$. Write its decomposition as
product of irreducible polynomials
$$
J=\prod_{i=1}^t f_i^{m_i},
$$
and let 
$$
J_{red}=\prod_{i=1}^t f_i,
$$
be the reduced equation of $\supp(R_\pi)$.

Choose $(a,b) \in\Aff_1:=(E\setminus\{0\})\times E$, and consider the affine-linear map $\varphi\,:\,\CC\longto E,
z\longmapsto az+b$ and its image $\ell$.
Set $M_z=M\circ\varphi$. It is a matrix with coefficients in the
principal ring $\CC[z]$. By Smith's theorem, there exists $U,V\in\GL_N(\CC[z])$ and a diagonal
matrix such that
\begin{equation}
UM_z V=\diag(s_1,\dots,s_N)\label{eq:Smith}
\end{equation}
with each $s_i$ dividing $s_{i+1}$. 
In addition, $\sum_i\deg(s_i)=\deg (\det M_z)\leq N$. 
Set 
\begin{equation}
  \label{eq:defdelta}
\delta_1=\frac{s_N}{\gcd(s_{N-1}^N, s_N)}
\quad\mbox{ and }\quad
\delta=\frac{\delta_1}{\gcd(\delta_1,\delta_1')}.
\end{equation}
For general $(a,b)$, $\det M_z$ is nonzero so $s_N$ and $\delta$ are nonzero either.

Let $V_N$ be the last column of $V$.

\begin{theo}
\label{th:ram0}
With above notation, assume that $(a,b)\in \Aff_1$ is general. 
Then, the divisor  $R_\pi$ is contracted if and only if $\delta$ divides 
 $T_{\varphi(z)} J_{red}(V_N(z))$ in $\CC[z]$. 
\end{theo}

\begin{proof}
Rewrite the divisor $R_\pi$ as 
$$
R_\pi=\sum_{i\in I} m_iD_i+\sum_{k\in K} n_k E_k
$$
where $m_i$, $n_k$ are positive integers, and the $E_i$ and $D_j$ are
pairwise distinct prime divisors such that
$$
\forall i\in I\quad\exists x\in D_i^s\quad \rk(T_x\pi)=N-1,
$$
and
$$
\forall k\in K\quad\forall x\in E_k\quad \rk(T_x\pi)\leq N-2.
$$

Let $(f_i)_{i\in I}$ and $(g_k)_{k\in K}$ be equations for the $D_i$'s
and $E_k$ respectively. 
Let $d_i$ and $e_k$ denote the degrees of $f_i$ and $g_k$ respectively.
Note that
$$
J=\prod_{i\in I} f_i^{m_i}\times \prod_{k\in K} g_k^{n_k},
$$
$$
J_{red}=\prod_{i\in I} f_i\times \prod_{k\in K} g_k,
$$
up to scalars that we modify to get the formulas.
We distinguish between the components $D_i$ and $E_k$, because of the
following consequence of Lemma~\ref{lem:domTpi}.

\smallskip

\noindent\underline{Claim 1.} Each divisor $E_k$ is contracted by
$\pi$.

\smallskip
Claim 1 implies that  
\begin{equation}
  \label{eq:5}
R_\pi \mbox{ is contracted  }\; \Leftrightarrow \;
  \forall i\in I,
D_i\mbox{ is contracted by }\pi.
\end{equation}

Write now the factorizations of $s_{N-1}$ and $s_N$:
$$
s_{N-1}=\prod_i (z-\xi_i)^{\alpha_i},\qquad
s_N=\prod_i (z-\xi_i)^{\beta_i}\times \prod_j (z-\zeta_j)^{\gamma_j},
$$
where the $\xi_i$'s and $\zeta_j$'s are pairwise distinct, the
$\alpha_i$'s, $\beta_i$'s and $\gamma_j$'s are positive integers such
that $N\geq \beta_i\geq \alpha_i$.
Observe that
$$
\delta=\prod_j (z-\zeta_j).
$$
By definition of the $D_i$ and of $s_N$, it is clear that $\ell\cap\bigcup_i D_i=\{\varphi(\zeta_j)\}$.
By genericity of $(a,b)$, $\ell\cap\bigcup_i D_i$ contains at least a point (more precisely, exactly $d_i$ points) 
on each irreducible component $D_i$. 
Moreover, we assume that each $\zeta_j$ lies in a single component $D_{i(j)}$ and is sufficiently general in it. 
Then, $\sum_i D_i$
is contracted by $\pi$ if and only if
\begin{equation}
  \label{eq:7}
  \forall j\qquad
T_{\varphi(\zeta_j)}D_{i(j)}\cap \Ker  M_z(\zeta_j)\neq\{0\}.
\end{equation}
On the one hand, for any such $\zeta_j$, \eqref{eq:Smith} implies that $\Ker
 M_z(\zeta_j)$ is spanned by $V_N(\zeta_j)$. 
On the other hand, $T_{\varphi(\zeta_j)}D_{i(j)}$ is the Kernel of $T_{\varphi(\zeta_j)} J_{red}$. 
We get that $R_\pi$ is contracted if and only if 
\begin{equation}
  \label{eq:9}
  \forall j\qquad T_{\varphi(\zeta_j)} J_{red} (V_N(\zeta_j))=0.
\end{equation}
The $\zeta_j$'s being the simple roots of the polynomial $\delta$,
this is equivalent to 
\[
\delta\mbox{ divides } T_{\varphi(z)} J_{red} (V_N(z)) \mbox{ in } \CC[z]. 
\] 
\end{proof}

Let $x_1,\dots,x_N$ be the variables on $E$.
Theorem~\ref{th:ram0} gives an algorithm to decide if $R_\pi$ is contracted or not, 
since every data $ J_{red}$, $V_N$ and $\delta$ can be computed explicitly.
Start by choosing randomly a pair $(a,b)\in \Aff_1$. 

\begin{enumerate}
  \item $J$ is the determinant of an explicit matrix $M=(\frac{\partial \pi_i}{\partial x_j})$.
  \item The polynomial $ J_{red}$ can be computed by factorizing $J$. 
  \item Smith's algorithm (see e.g. \cite{Villard:Smith}) allows you to compute, starting with $ M_z$, 
  the matrices $U$ and $V$ and the polynomials $s_1,\dots,s_N$. 
  We deduce $\delta$ and $V_N$ easily. 
\end{enumerate}

\begin{remark}
  Taking $(a,b)$ the generic point of $\Aff_1$, and hence working in
  $\CC(\Aff_1)[z]$, one gets a deterministic algorithm to decide if
  the divisor is contracted. Unfortunately, it is not efficient.
  It works only up to Kronecker $(3,2,2)$.
\end{remark}
\smallskip

In Section~\ref{sec:raf5}, we give alternative methods to compute 
these data that speed up considerably the algorithm. 

\subsubsection{Compactification of the map  \texorpdfstring{$\pi$}{pi} of Theorem~\ref{th:Kron}}
\label{sec:bircompact}

We cannot apply Lemma~\ref{lem:RamContract} directly to the map $\pi$ of
Theorem~\ref{th:Kron}  because it is not proper. 
This is why we introduce a partial compactification of its domain.

Consider the parabolic subgroup $P(-\tau)$.
Let $Y$ be a  $P(-\tau)$-stable locally closed subvariety of some $G$-variety $X$. 
The quotient of $G\times Y$ by  the following action of $P(-\tau)$:
$$
p.(g,v):=(gp\inv,pv)
$$
identifies with
$$
\Yc=\{(gP(-\tau)/P(-\tau),x)\in G/P(-\tau)\times X\;:\;g\inv x\in Y\}
$$
by the map
$$
(g,v)\longmapsto (gP(-\tau)/P(-\tau),gv).
$$
Viewed as the quotient, $\Yc$ is denoted by $G\times_{P(-\tau)}Y$.
Observe that, in  this construction of $G\times_{P(-\tau)}Y$ one can
replace $G$, by any locally closed subvariety $Z$ of $G$ that is stable by
right multiplication by $P(-\tau)$.

We now fix a pair $(\tau,w)$ such that $\pi$ is dominant and
Condition~\ref{ass:Kron2}holds.
Observe that $\pi$ identifies with 
$$
\pi\,:\,(w\inv B^-wP(-\tau))\times_{P(-\tau)}\VV^{\tau\leq 0}\longto\VV.
$$
Indeed, since $w\in W^{P(\tau)}$, the map $(w\inv U^-w\cap U)\longto
w\inv B^-wP(-\tau)/P(-\tau)$, $u\longmapsto uP(-\tau)/P(-\tau)$ is an isomorphism.
Consider the following subvarieties of $G/P(-\tau)$:
$$
\begin{array}{ll}
  X_w=w\inv \overline{B^-wP(-\tau)/P(-\tau)}&\mbox{the translated
                                                 Schubert variety;}\\[1em]
  X_w^\circ=w\inv B^- wP(-\tau)/P(-\tau)&\mbox{the translated
                                                 Schubert cell.}
\end{array}
$$
Consider
$$
\begin{array}{cccl}
\bar\pi\,:&\Xc:=\tilde X_w\times_{
              P(-\tau)}\VV^{\tau\leq 0}&\longto&\VV\\
            &[g:v]&\longmapsto&gv,
\end{array}
$$
where $\tilde X_w$ denote the pullback
in
$G$ of $X_w\subset G/P(-\tau)$ by $G\longto G/P(-\tau)$.
We can see $G\times_{P(-\tau)} V^{\tau\leq0} \rightarrow G/P(-\tau)$ as a locally trivial bundle with fiber isomorphic to $V^{\tau\leq 0}$. So $\Xc$ is locally isomorphic to $X_w\times V^{\tau\leq0}$. Since $X_w$ is normal (see e.g. \cite{BrionKumar}), $\Xc$ is also normal.

Observe that
$$
\begin{array}{cccl}
\Xc&\longto&G/P(-\tau)\times \VV\\
(g,v)&\longmapsto&(g P(-\tau)/P(-\tau) ,gv)
\end{array}
$$
is an immersion.
In particular, we have:
\begin{lemma}
\label{lem:piproper}
  The map $\bar\pi$ is proper.
\end{lemma}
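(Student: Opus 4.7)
The plan is to realize $\bar\pi$ as the restriction of a projection from a projective-over-$\VV$ ambient space, so that properness is immediate.

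First, I would promote the immersion displayed just above the lemma to a \emph{closed} immersion into $X_w \times \VV$. Since $X_w$ is closed in the projective variety $G/P(-\tau)$, the image of $\Xc \hookrightarrow G/P(-\tau)\times\VV$ lies in $X_w\times\VV$. Via the identification $\Xc \simeq \tilde X_w\times_{P(-\tau)}\VV^{\tau\leq 0}$, the image is exactly
$$
\{(gP(-\tau)/P(-\tau),x)\in X_w\times\VV\;:\;g\inv x\in\VV^{\tau\leq 0}\}.
$$
The condition $g\inv x\in\VV^{\tau\leq 0}$ does not depend on the choice of representative $g$, because $\VV^{\tau\leq 0}$ is $P(-\tau)$-stable (its Levi $G^\tau$ preserves each weight space of $\tau$, and the unipotent radical $U(-\tau)$ maps $\VV^{\tau=\ell}$ into $\bigoplus_{\ell'\leq \ell}\VV^{\tau=\ell'}$). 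Furthermore, this condition is closed on $X_w\times\VV$: pick any local section of $G\to G/P(-\tau)$ over an open subset $\Omega\subset X_w$; then the condition becomes the vanishing of the projection of $g\inv x$ onto $\VV^{\tau>0}$, which is a regular function of $(gP(-\tau),x)\in\Omega\times\VV$. Hence the image of $\Xc$ is closed in $X_w\times\VV$.

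Second, under this identification $\bar\pi$ becomes nothing but the restriction to $\Xc$ of the projection
$$
p_2\,:\,X_w\times\VV\longto\VV.
$$
Since $X_w$ is closed in the projective variety $G/P(-\tau)$, it is itself projective, and therefore $p_2$ is proper (projections from projective varieties are universally closed and of finite type). A closed immersion is proper, and composition of proper morphisms is proper, so $\bar\pi$ is proper, as required. $\square$

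There is no real obstacle beyond bookkeeping; the only point that deserves care is verifying that the set-theoretic image of the immersion is closed and that the fibration $\Xc\to X_w$ is compatible with the $\VV$-valued map in the way claimed. Both reduce to the fact that $\VV^{\tau\leq 0}$ is $P(-\tau)$-stable, which is how the twisted product was set up in the first place.
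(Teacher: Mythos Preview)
Your argument is correct and follows exactly the approach the paper intends: the lemma is stated in the paper as an immediate consequence (``In particular'') of the displayed immersion into $G/P(-\tau)\times\VV$, and you have simply filled in the two details the paper leaves implicit, namely that the immersion is closed (because $X_w$ is closed in $G/P(-\tau)$ and the condition $g\inv x\in\VV^{\tau\leq 0}$ is closed by $P(-\tau)$-stability of $\VV^{\tau\leq 0}$) and that the second projection $G/P(-\tau)\times\VV\to\VV$ is proper by projectivity of $G/P(-\tau)$.
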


\smallskip\noindent{\bf The boundary.}
Let $\Delta$ be the set of simple roots. On $W$, we denote by $\leq$
the Bruhat order.
Consider now the boundary:
\begin{equation}
  \label{eq:bordXw}
  \partial X_w=X_w-X_w^\circ=\bigcup_{\begin{subarray}{c} v\in W^{P(\tau)},\, v\leq  w,\\ 
\ell(v)=\ell(w)-1 \end{subarray}}
w\inv Y_v,
\end{equation}
where $Y_v=\overline{B^-vP(-\tau)/P(-\tau)}=vX_v$.

  By the properties of the Bruhat order, for any $v$ appearing in
  \eqref{eq:bordXw}, 
  $v=s_\beta w$ for some positive root $\beta$.
  We distinguish two cases:
  \begin{enumerate}
  \item weak Bruhat order: $\beta\in\Delta$;
    \item strictly strong Bruhat order: $\beta\not\in\Delta$.
    \end{enumerate}
    The point is that this dichotomy reflects the relative position of
    the corresponding component of the boundary and the
    ramification divisor of $\bar\pi$.

For $v=s_\beta w$ as in \eqref{eq:bordXw}, set
$$
D_\beta=w\inv \tilde Y_v\times_{P(-\tau)}\VV^{\tau\leq 0}.
$$

Since $\Xc$ is normal, it is smooth in codimension 1 and we consider the ramification divisor $\Rc$ of $\bar \pi$ as defined in Section~\ref{subsec:ramif_div}.

\begin{lemma}
\label{lem:posalphaR}
  Let $\alpha\in\Delta$ such that $\ell(v)=\ell(w)-1$ where
  $v=s_\alpha w$.

Then $D_\alpha$ is not contained in the support of $\Rc$.
\end{lemma}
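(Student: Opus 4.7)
The strategy is to exhibit a single point $p_0 \in D_\alpha$ lying in the smooth locus $\Xc^s$ at which the tangent map $T_{p_0}\bar\pi^s$ is invertible. By Lemma~\ref{lem:domTpi} applied to $\bar\pi^s$, the locus of invertibility is open in $\Xc^s$, so producing one such point in $D_\alpha$ implies that $D_\alpha$ is not contained in $\supp(\Rc)$.

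The key combinatorial input is that when $\alpha \in \Delta$ and $v = s_\alpha w$ with $\ell(v) = \ell(w) - 1$, the inversion sets differ by exactly one root, namely $\Phi(w) = \Phi(v) \sqcup \{-w\inv\alpha\}$, so $\mathfrak u(\Phi(w)) = \mathfrak u(\Phi(v)) \oplus \mathfrak g_{-w\inv\alpha}$. Geometrically, this decomposition reflects the $P_\alpha$-equivariant structure of $X_w$ along $Y_v$: because $\alpha$ is simple, the minimal parabolic $P_\alpha$ acts on $X_w$ preserving $Y_v$, and this action supplies a smooth transverse direction to $Y_v$ inside $X_w$ at a generic point of $Y_v$ (locally a $\mathbb{P}^1$-bundle structure over $X_v$). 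Since $X_w$ is normal, its singular locus has codimension at least two, so a generic point $\bar g_0$ of the open Schubert cell $w\inv B^- v P(-\tau)/P(-\tau)$ inside $w\inv Y_v$ lies in $X_w^s$; choosing $x_0 \in \VV^{\tau \leq 0}$ generic then places $p_0 = [g_0 : x_0] \in D_\alpha \cap \Xc^s$.

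With the local model $\mathfrak u(\Phi(v)) \oplus \mathfrak g_{-w\inv\alpha}$ for the transverse parametrization of $\Xc^s$ near $p_0$, the computation of $T_{p_0}\bar\pi^s$ proceeds exactly as in Lemma~\ref{lem:wellc}(\ref{ass:wellc2}): it is block-triangular, with an identity block on $\VV^{\tau \leq 0}$ and one block for each weight $\ell > 0$, given by $\mathfrak u(\Phi(w))^{\tau = \ell} \otimes x_0 \to \VV^{\tau = \ell}$. These blocks are square by Condition~\ref{ass:Kron2}. Since $\pi = \bar\pi^\circ$ is dominant by \eqref{eq:pidom}, Lemma~\ref{lem:wellc}(\ref{ass:wellc1}) yields a generic $x \in \VV^\tau$ at which each block is invertible; by openness of the invertibility condition in $\VV^{\tau\leq 0}$ and genericity of $x_0$, the same holds at $p_0$.

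The main obstacle is the rigorous justification of the local $\mathfrak g_{-w\inv\alpha}$ transverse direction along $D_\alpha$, that is, identifying $T_{p_0}\Xc^s / T_{p_0}D_\alpha$ with $\mathfrak g_{-w\inv\alpha}$ as a $T$-module. This step uses that $\alpha$ is simple in an essential way, through the $P_\alpha$-action sketched above; it is precisely this $P_\alpha$-structure that fails in the strictly strong Bruhat case, which explains why boundary divisors $D_\beta$ with $\beta \notin \Delta$ do in general contribute to $\Rc$.
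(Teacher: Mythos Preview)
Your plan is sound and the combinatorics $\Phi(w)=\Phi(v)\sqcup\{-w^{-1}\alpha\}$ is correct, but the route is more circuitous than needed and the central step is not actually carried out. The claim that $T_{p_0}\bar\pi^s$ at $p_0=[g_0:x_0]\in D_\alpha$ has the same diagonal blocks $\mathfrak u(\Phi(w))^{\tau=\ell}\to\VV^{\tau=\ell}$ as in Lemma~\ref{lem:wellc} is not justified: that lemma is computed in the chart $\Xc^\circ$ at the point $(e,x)$, whereas $p_0$ lies outside this chart, and you never write down a local parametrization of $\Xc^s$ around $p_0$ in which the differential takes that form. You flag this yourself as ``the main obstacle'' and propose to fix it via the minimal parabolic action.

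But once the $H:=w^{-1}P^-(\alpha)w$-equivariance of $\bar\pi$ is on the table, the tangent computation becomes superfluous, and this is exactly the paper's argument. Since $\alpha\in\Delta$ and $v=s_\alpha w\leq w$, the closure $\overline{B^-wP(-\tau)}$ is $P^-(\alpha)$-stable; hence $X_w$ is $H$-stable and $\bar\pi$ is $H$-equivariant. The open set $w^{-1}P^-(\alpha)wP(-\tau)\times_{P(-\tau)}\VV^{\tau\leq 0}$ contains $\Xc^\circ$ and meets $D_\alpha$; since the ramification locus is $H$-stable and $H$ acts transitively on the base of this open set, unramifiedness at a generic point of $\Xc^\circ$ (granted by dominance of $\pi$) transports to generic points of $D_\alpha$. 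So your proposal and the paper's proof rest on the same mechanism; the paper simply applies the equivariance directly to $\Rc$ rather than through an unwritten local chart, which collapses the argument to one line.
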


\begin{proof}
  Consider the minimal parabolic $P^-(\alpha)$ containing $B^-$
  associated to $\alpha$. Then, $X_w$ is $w\inv P^-(\alpha)w$-stable
  and
  $\bar\pi\,:\overline{w\inv
    P^-(\alpha)wP(-\tau)}\times_{P(-\tau)} \VV^{\tau\leq 0}\longto \VV$ is
  $w\inv P^-(\alpha)w$-equivariant.

  Since $w\inv P^-(\alpha)w D_\alpha$ is the open subset $w\inv
    P^-(\alpha)wP(-\tau)\times_{P(-\tau)}  \VV^{\tau\leq 0}$, one deduces that
    $D_\alpha$ cannot be contained in the $w\inv P^-(\alpha)w$-stable subset $\supp(\Rc)$ of codimension $1$.
\end{proof}

\begin{lemma}
\label{lem:posbetaR}
  Let $\beta$ be a positive root that is not a simple root such that $\ell(v)=\ell(w)-1$ where
  $v=s_\beta w$.

Then $D_\beta$ is contained in the support of $\Rc$.
\end{lemma}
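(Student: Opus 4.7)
The plan is to show that the Jacobian $\det T_x \bar\pi$ vanishes generically on $D_\beta$, placing $D_\beta$ in $\supp \Rc$. Unlike Lemma \ref{lem:posalphaR}, no minimal parabolic subgroup containing $B^-$ controls $s_\beta$ when $\beta$ is non-simple, so equivariance is not available; the argument must instead be based on the combinatorics of $\Phi(w)$.

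First I would translate the hypothesis via the strong exchange property: fix a reduced expression $w = s_{i_1}\cdots s_{i_r}$ and an index $k$ such that $v = s_{i_1}\cdots\widehat{s_{i_k}}\cdots s_{i_r}$ and $\beta = s_{i_1}\cdots s_{i_{k-1}}(\alpha_{i_k})$; the non-simplicity of $\beta$ means $k \geq 2$. Setting $\alpha := -w^{-1}\beta = s_{i_r}\cdots s_{i_{k+1}}(\alpha_{i_k})$, one has $\{\alpha\} = \Phi(w)\setminus\Phi(v)$, and the corresponding root subspace $\mathfrak g_\alpha$ represents the normal direction to $w^{-1}Y_v$ inside $\tilde X_w$ at a generic point. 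The key combinatorial consequence of $k \geq 2$ is that $\alpha$ is not extremal within $\Phi(w)$: by the convexity/coconvexity characterization of inversion sets (\cite[Proposition~5.10]{Kost}), one can extract a relation $\alpha = \alpha_1 + \alpha_2$ with $\alpha_1, \alpha_2 \in \Phi(w)$, both non-zero. This decomposition is the structural ingredient which is unavailable in the simple case.

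Next I would analyze $T_x\bar\pi$ for a generic $x \in D_\beta$. By the $U(\Phi(w))$-equivariance used in the proof of Lemma \ref{lem:wellc} together with the block-triangular form of Lemma \ref{lem:wellc}.\ref{ass:wellc2}, the Jacobian $\det T_x \bar\pi$ decomposes into a product of determinants over the positive $\tau$-levels. At level $\ell = \langle\tau,\alpha\rangle$, the corresponding block is a square matrix (by condition \ref{ass:Kron2}), and its column indexed by $\mathfrak g_\alpha$ is given by the infinitesimal action $\mathfrak g_\alpha \cdot (\text{base point})$. On $D_\beta$, the Baker--Campbell--Hausdorff product of root one-parameter subgroups produces the $\mathfrak g_\alpha$-direction as the second-order cross term $[\mathfrak g_{\alpha_1}, \mathfrak g_{\alpha_2}] \supseteq \mathfrak g_\alpha$; this forces the $\mathfrak g_\alpha$-column to become $\mathbb C$-linearly dependent on the $\mathfrak g_{\alpha_1}$- and $\mathfrak g_{\alpha_2}$-columns, yielding the vanishing of the block determinant.

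The main obstacle, I expect, is twofold: (i) securing the decomposition $\alpha = \alpha_1 + \alpha_2$ inside $\Phi(w)$ in full generality, since not every positive-root decomposition of $\beta$ transports through $w^{-1}$ into $\Phi(w)$; this forces a careful case analysis based on the signs of $w^{-1}\gamma$ for subroots $\gamma$ of $\beta$, invoking coconvexity to guarantee at least one in-$\Phi(w)$ component and iterating if necessary; and (ii) making the column dependence rigorous by choosing étale coordinates adapted to the Bruhat stratification (so that $D_\beta = \{t_\alpha = 0\}$ locally), expanding the product of exponentials to second order in $t_{\alpha_1}, t_{\alpha_2}$, and verifying that no compensating contribution from the $\VV^{\tau \leq 0}$-block prevents the Jacobian from vanishing. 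Conditions \ref{ass:Kron2} and \eqref{eq:wellcnum} are indispensable here: they ensure the block at level $\ell$ is square and that no unused degree of freedom in the target can mask the column dependence.
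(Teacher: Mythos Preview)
Your proposal rests on two combinatorial claims that are both false, and this is a genuine gap.

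First, the identity $\{\alpha\}=\Phi(w)\setminus\Phi(v)$ holds precisely in the \emph{weak} Bruhat case $\beta\in\Delta$, which is the case being excluded. When $\beta$ is non-simple, $s_\beta$ has length greater than one, so it moves many positive roots across the sign barrier; consequently $\Phi(v)\not\subset\Phi(w)$ and the two inversion sets differ by more than a single element. Second, and fatally for the BCH column-dependence mechanism, the root $\alpha=-w^{-1}\beta$ need not decompose inside $\Phi(w)$ at all. Take $G=\GL_3$, $w=s_1s_2$, $\beta=\alpha_1+\alpha_2$: then $v=s_\beta w=s_1$, $\Phi(w)=\{\alpha_2,\alpha_1+\alpha_2\}$, $\Phi(v)=\{\alpha_1\}$, and $\alpha=\alpha_2$ is a \emph{simple} root, hence not a sum of two positive roots in any sense. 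No amount of case analysis over decompositions of $\beta$ or appeals to coconvexity will manufacture one. Your \'etale-coordinate picture ``$D_\beta=\{t_\alpha=0\}$'' is also off: $D_\beta$ lies in the boundary $\Xc\setminus\Xc^\circ$, not on a coordinate hyperplane of the big cell.

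The paper's route is different. It restricts $\bar\pi$ to $D_\beta$ and examines $T\pi_\beta$ at a generic point of the open $v$-cell inside $D_\beta$. After translation, this tangent map is $(\eta,y)\mapsto\eta\cdot x+y$ on $\mathfrak u(\Phi(v))\times\VV^{\tau\leq 0}$, with block-triangular diagonal blocks $\mathfrak u(\Phi(v))^{\tau=l}\to\VV^{\tau=l}$. The source is governed by $\Phi(v)$, not $\Phi(w)$, and the argument that the resulting block shapes force non-injectivity is deferred to \cite[Theorem~7.4]{BKR}; it is not a pure dimension count, since $\lvert\Phi(v)_l\rvert\leq\lvert\Phi(w)_l\rvert$ can hold at every level. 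Non-injectivity of $T\pi_\beta$ then yields $\ker T\bar\pi\neq 0$ generically on $D_\beta$, hence $D_\beta\subset\supp(\Rc)$.
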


\begin{proof}
  Consider the restriction $\pi_\beta$ of $\pi$ to $D_\beta$. 
Let $v\in V^{\tau \leq 0}$. 
One easily checks that the matrix of the tangent map of $T_{[e:v]}\pi_\beta$ is block triangular. 
More like in \cite[Theorem~7.4]{BKR}, one can prove that the shape of these blocks implies that  $T_{[e:v]}\pi_\beta$ 
cannot be injective. In \emph{loc. cit.}, the hypothesis on the $Q_w$-orbit amounts to assume $\beta$ is not simple.
\end{proof}

\subsection{Application to our algorithm}

Fix again a pair $(\tau,w)$ such that $\pi$ is dominant and
Condition~\ref{ass:Kron2} holds. 
  
Consider the map $\bar\pi$ constructed in the previous section and its ramification divisor $\Rc$. Note that all the assumptions of Lemma~\ref{lem:RamContract} are satisfied for $\bar \pi$ and this map is birational if and only if $\pi$ is. Thus
\begin{equation}
\pi \mbox{ is birational if and only if }  \Rc\mbox{ is contracted.}
\end{equation}

Consider the inclusion $\iota$ of the domain of $\pi$ in that of $\bar\pi$. 
We have $\Rc_\pi=\iota^*(\Rc)$. Then, by Lemmas~\ref{lem:posalphaR} and \ref{lem:posbetaR}, we have
\begin{equation}
  \supp(\Rc)=\overline{\supp(\Rc_\pi)}\cup\bigcup_{
    \begin{subarray}{c}
      \beta\in\Phi^+-\Delta \; { \rm s.t.}\\
      s_\beta\in W^{P(\tau)}, \;\ell(s_\beta w)=\ell(w)-1
    \end{subarray}
  } D_\beta.
\end{equation}

\smallskip
We have $D_{\beta}\cong v^{-1}wD_{\beta}= v\inv \tilde Y_v\times_{P(-\tau)}\VV^{\tau\leq 0}$. 
Similarly to Section \ref{sec:bircompact}, on the open cell $D_{\beta}^{\circ}$ of $D_{\beta}$,
 the map $\bar\pi_{|D_{\beta}^{\circ}}$ identifies with the (nondominant) map
$$
\pi_v\,:\,(v\inv U^-v\cap U)\times\VV^{\tau\leq 0}\longto\VV.
$$
In particular $D_\beta$ is not contracted by $\bar\pi$ if and only if the tangent map of $\pi_v$
at $(e,x)$ for $x$ general in $\VV^{\tau\leq 0}$ is injective.
This last condition is very similar to the dominancy condition of $\pi$ and can be checked following the same process, see Section~\ref{sec:Step4}.   

\smallskip
In the Kronecker case for $d=(4,4,4)$, over the 183 redundant inequalities considered at this step
(up to $\mathfrak S_3$-symmetry), 64  are eliminated  by the detection of some noncontracted $D_\beta$.
        
\smallskip
To decide whether $\Rc_\pi$ is contracted or not, we use Theorem~\ref{th:ram0} as explained after its proof.

\section{The Kronecker, Fermionic and Bosonic cones}
\label{sec:3cones}

\subsection{Representations of \texorpdfstring{$\GL_n(\CC)$}{GLnC}}

Fix a positive integer $n$ and consider the polynomial representations of
the linear group $\GL_n(\CC)=\GL(V)$, where $V$ is a fixed
$n$-dimensional complex vector space.  
Here polynomial means that we do not use $\det\inv$. 
The irreducible polynomial representations are parameterized by the partitions
$\lambda$ of length $\ell(\lambda)$ at most $n$. The representation associated to $\lambda$ is called the
Weyl module of highest weight $\lambda$ and is denoted by $S^\lambda
V$($=V_{GL_n(\CC)}(\lambda)$ in the notation of \S \ref{sec:intro_cone}).
For example $S^{1^r}V=\bigwedge^rV$ and $S^rV$ is the symmetric power
of $V$.

\subsection{Some multiplicities for branching problems}

Consider the complex representations of
the symmetric group $\mathfrak S_n$. 
The irreducible representations are parametrized by the partitions
$\lambda$ of $n$. The representation associated to $\lambda$ is called the
Specht module of type $\lambda$ and is denoted by $[\lambda]$.

Consider now, the problem of decomposing the tensor products of Specht
modules. Hence, given $s$ partitions $\lambda_1,\dots,\lambda_s$ of $n$,  denote by
$$
g_{\lambda_1,\lambda_2,\dots,\lambda_s}=\dim\bigg(
([\lambda_1]\otimes [\lambda_2]\otimes\cdots\otimes[\lambda_s])^{\mathfrak S_n}
\bigg)
$$
the multiple Kronecker coefficient, which coincides with the multiplicity 
of $[\lambda_s]$ in $[\lambda_1]\otimes [\lambda_2]\otimes\cdots\otimes[\lambda_{s-1}]$.
Observe that, by associativity of the tensor product, we have, for any $s\geq 4$
\begin{equation}
  \label{eq:assKron}
  g_{\lambda_1\dots,\lambda_s}=\sum_\mu g_{\lambda_1,\dots,\lambda_{s-2},\mu}\;.\;g_{\mu,\lambda_{s-1},\lambda_s}.
\end{equation}
Fix $s$ finite dimensional complex vector spaces $V_1,\dots,V_s$.
By Schur-Weyl duality, one has, under the action of $G=\GL(V_1)\times\cdots\times\GL(V_s)$
\begin{equation}
  \label{eq:SWdualmK}
  S^n (V_1\otimes\cdots\otimes
  V_s)=\bigoplus_{(\lambda_1,\dots,\lambda_s)}g_{\lambda_1,\dots,\lambda_s}
  S^{\lambda_1}V_1\otimes\cdots\otimes S^{\lambda_s}V_s,
\end{equation}
where the sum runs over the $s$-uples of partitions of $n$ such
that $\ell(\lambda_i)\leq \dim(V_i)$.

\bigskip

Similarly, given $s+1$ partitions $\lambda_1,\dots,\lambda_s,\nu$ of
length at most $n$,  denote by
$$
c_{\lambda_1,\dots,\lambda_s}^\nu=\dim\bigg(
(S^\nu V^*\otimes S^{\lambda_1}V\otimes\cdots\otimes S^{\lambda_s}V)^{\GL(V)}
\bigg)
$$
the multiple Littlewood-Richardson coefficient. 
Observe that, by associativity of the tensor product, we have, for any $s\geq 3$
\begin{equation}
  \label{eq:assLR}
  c_{\lambda_1\dots,\lambda_s}^\nu=\sum_\mu
  c_{\lambda_1,\dots,\lambda_{s-2},\mu}^\nu \;\cdot\;c_{\lambda_{s-1},\lambda_s}^\mu.
\end{equation}

\bigskip
Given a vector space $V$ and two partitions $\mu$ and $\theta$ such
that $\ell(\theta)\leq \dim(V)$ and $\ell(\mu)\leq \dim(S^\theta V)$,
consider the irreducible $\GL(S^\theta V)$-representation
$S^\mu(S^\theta V)$. Under the $\GL(V)$-action, it decomposes as
$$
S^\mu(S^\theta V)=\bigoplus_{\lambda}a(\mu,\theta,\lambda) S^\lambda V,
$$
where the sum runs over the partitions $\lambda$ such that
$\ell(\lambda)\leq\dim(V)$ and $|\lambda|=|\mu|.|\theta|$.
The multiplicities $a(\mu,\theta,\lambda)$ are called {\it Plethysm
  coefficients}.

\subsection{Three moment cones}
\label{subsec:3mc}
Fix $s$ positive
integers $\ud=(d_1\geq d_2\geq\cdots\geq d_s)$  and $s$ vector spaces  $V_k$ of respective dimension $d_k$.
Let $G_{\ud}:=\GL(V_1)\times\cdots\times\GL(V_s)$. When $s=1$, we may simply replace notation $\ud$ and $d_1$
 by $d$ and we may omit any index or superscript $k$ referring to $V_k$.

We also fix a basis on each vector space $V_k$ thus allowing us to identify endomorphisms and matrices.
Then $T=T_{\ud}$ (resp. $B$) denotes the product of the groups of diagonal (resp. upper triangular) invertible matrices.
Hence, elements of $T$ are $s$-uples of diagonal matrices.
For any $1\leq k\leq s$ and $i\leq d_k$, let $\epsilon^k_i\in
X(T)$ sending an element of $T$ to the $i^{th}$ diagonal element of
its $k^{th}$ matrix.

We also fix the Hermitian product on each $V_k$ making the fixed
basis unitary. Then $K=U(V_1)\times\cdots\times U(V_s)$ is a maximal
compact subgroup of $G_{\ud}$.

{\bf Kronecker case.}
Consider  $\VV=\VV_{\ud}=V_1\otimes V_2\otimes\cdots\otimes V_s$ endowed with 
its natural action of
$G_{\ud}$.

Then $\CC[\VV]=\oplus_{l\geq 0}S^l\VV^*$ is a $G_{\ud}$-module that decomposes as (see \eqref{eq:SWdualmK}):
\begin{equation}
\bigoplus_{l\geq 0}\bigoplus_{
  \begin{subarray}{c}
    (\lambda_1,\dots,\lambda_s)\; \; s \; { \rm  partitions}\\
    \ell(\lambda_i)\leq d_i\\
    |\lambda_1|=\cdots=|\lambda_s|=l
  \end{subarray}}
g_{\lambda_1,\dots,\lambda_s}S^{\lambda_1}V_1^* \otimes
\cdots \otimes S^{\lambda_s}V_s^*.\label{eq:decCV}
\end{equation}

Hence,
$$
S(\VV_{\ud},G_{\ud})=\left\{\ulambda^*=(\lambda_1^*,\dots,\lambda_s^*) \,:\,
\begin{array}{l}
\lambda_k \mbox{
  are partitions such that}\\
\mbox{$\ell(\lambda_k)\leq\dim(V_k)$, and $g_{\lambda_1,\dots,\lambda_s}\neq 0$}
\end{array}
\right\} 
$$
is, modulo the twist $\lambda\mapsto\lambda^*$, the multiple Kronecker semigroup.

\bigskip
The set of weights of $T$ on $\VV$ is
$$
\Wt(\VV):=\{[a_1,a_2,\dots,a_s]:=\sum_{k=1}^s\epsilon^k_{a_k}\,:\,\forall
 k\in [\![1,s]\!], 
1\leq a_k\leq d_k\}.
$$

The cone $\Cc(\VV_{\ud},G_{\ud})$ is contained in the subspace defined by the condition
$|\lambda_1|=\cdots=|\lambda_s|$.  
This is explained by the fact that the $(s-1)$-dimensional torus
$$
Z=\{(t_1\Id_{V_1}, t_2\Id_{V_2},\dots,
t_s\Id_{V_s})\,:\,t_i\in\CC^* \mbox{ s.t. }\prod_{k=1}^st_k=1\}\subset T
$$
acts trivially on $\VV_{\ud}$.
Then the group of characters $X(T/Z)$ is the subgroup of $X(T)$ characterized by the conditions 
$|\lambda_1|=\cdots=|\lambda_s|$. 
In order to apply Theorem \ref{th:Kron}, we simply replace $G_{\ud}$ by $G_{\ud}/Z$.

For $1\leq k\leq s$ and $1\leq i\leq d_k$, let $\epsilon^{k,*}_i\in
X_*(T)$, sending $t\in \C$ on the tuple of diagonal matrices having $t$ at
the $i^{th}$ entry of the $k^{th}$ matrix and $1's$ elsewhere.
They form a basis $\Ec$ of $X_*(T)$. Letting $\Ec_0:=\{\epsilon^{k,*}_{d_k}\,:\, k<s\}$, 
the family $\Ec\setminus\Ec_0$ yields a basis of $X_*(T/Z)=X_*(T)/X_*(Z)$.
We choose to identify $X_*(T/Z)$ to the subgroup of $X_*(T)$ imposing null coordinates in the elements of $\Ec_0$. An element $\tau$ belonging to this subgroup is said to be {\it normalized}, see later example \ref{ex:taubar}.

\bigskip
{\bf Fermionic and Bosonic cases.}
Here $s=1$, so $V$ is a $d$-dimensional vector space.
Fix also a positive integer $r$. In the fermionic case, assume that
$r<d$. Set
$\theta=1^r$ in the fermionic case and $\theta=r$ in the bosonic case.
Consider the $G_d$-representation $\VV=S^\theta V$.  
Then
$$
S(\VV,G_d)=\left\{\lambda^* \,:\,
\begin{array}{l}
\lambda \mbox{
  is a  partition such that $\ell(\lambda)\leq\dim(V)$, and }\\
a(d,\theta,\lambda)\neq 0, \mbox{ where }rd=|\lambda|
\end{array}
\right\}. 
$$
In the fermionic case, the set of weights of $T$ on $\VV$ is
$$
\Wt(\VV):=\{[i_1<i_2<\dots<i_r]:=\sum_{t=1}^r\epsilon_{i_t}\,:\,\forall
1\leq i_t\leq d
\}.
$$
In the bosonic case, the set of weights of $T$ on $\VV$ is
$$
\Wt(\VV):=\{[i_1\leq i_2\leq \dots\leq i_r]:=\sum_{t=1}^r\epsilon_{i_t}\,:\,\forall
1\leq i_t\leq d
\}.
$$

\section{Some refinements in Step 1}
\label{sec:raf1}

\subsection{The general case}
\label{sec:step1_raf_gen}

Let us present two improvements of Algorithm~\ref{algo:Sreg}. 

First, one can anticipate Step 2 by keeping only the $\tau$'s such that
\begin{equation}  
  \label{eq:2veryweak}
  \sum_{\begin{subarray}{c}
    \chi\in \Wt(\VV)\\
    \langle\tau,\chi\rangle>0 
  \end{subarray}
  }m_\chi\leq
    \dim(U(\tau)). \tag{B''}
    \end{equation}
Indeed, condition \eqref{eq:2veryweak} is a consequence of $\eqref{eq:2weak}$. 
This observation gives an exit condition in the recursive Algorithm~\ref{algo:Sreg}: 
if at some step the cardinality of $S_+$ exceed $\dim(U(\tau))$ one can stop the inspection.
Observe that, $U(\tau)$ only depends on the face of $X_*(T)$ containing $\tau$ in its interior. 
Thus, in our algorithm, it is known before $\tau$ is. 

For example, in the case of Kron(5,5,5), when we are looking for regular $\tau$, 38 weights over the 125 ones 
are necessarily in $S_-$ by this condition.  
These 38 weights are never considered as elements in a candidate $S_0$ by our algorithm.

\bigskip
At the line \ref{line:choose_chi}, Algorithm~\ref{algo:Sreg} chooses an indeterminate weight $\chi$.
Any choice makes the algorithm correct, but the execution time depends on it. 
Basically, we choose $\chi$ to maximize the number of indeterminate $\chi'$ comparable to $\chi$.
Indeed, these weights will be affected to $S_\pm$ in the branch where $\chi$ is put in $S_0$. 

At line~\ref{line:Cp} of Algorithm~\ref{algo:Sreg}, we compute the rank of a matrix whose the rows are characters.
Since we compute a lot of ranks for matrices having the same first rows.
We may want to compute echelon forms of matrices as we add weights to $S_0$.
In our Python-Sage implementation this method is not faster, but it could be in other languages. 

\subsection{The Kronecker case}

Now, we look at some features specific to the Kronecker case as in Section~\ref{subsec:3mc}. 

\bigskip
\noindent{\bf Using symmetries.}
To simplify the exposition, assume here, that $\ud=(d,d,d)$ for some $d>1$. 
Then our problem and, in particular, the cone $\Cc(\VV_{\ud},G_{\ud})$ is stable under the $\mathfrak S_3$-symmetry 
corresponding to permutations on the three factors.

In Step~1, it is sufficient to describe the $\tau$'s up to $\mathfrak S_3$-symmetry.
A way to exploit this in Step~1 runs as follows. 
Assume that, in one recursive instance in Algorithm~\ref{algo:Sreg}, $S_{\neq 0}$ is nonempty while $S_0$ is empty. 
Then one can assign in $S_{\neq 0}$ the $\mathfrak S_3$-orbit of each one of its elements. 
This allows us to affect the characters to $S_{\neq 0}$ more quickly and, hence, speed up the algorithm. 

This observation is refined in the implemented version to deal with the symmetry groups of each face of $X_*^+(T)$. 

\bigskip
\noindent{\bf About the faces of the set of dominant one-parameter subgroups.}

Recall from \S\ref{subsec:3mc} that we have to replace the group $G$ by its quotient $G/Z$ in this case, 
that $X(T/Z)$ is a subset of $X(T)$ and that $X_*(T/Z)$ is identified to a subset of $X_*(T/Z)$.

Condition~\eqref{eq:3weak} becomes:
\begin{equation}\label{eq:3weakkron}
\Wt_T(\VV)^{\tau} \mbox{spans an $s$-codimensional subspace of } X(T)_{\QQ}.
\tag{C'${}_{Kron}$}
\end{equation}

\begin{remark}\label{rk:finiteness}
Note that if an indivisible dominant normalized $\tau$ satisfies condition \eqref{eq:3weak}, then $\Q\tau+X_*(Z)_{\QQ}$ is uniquely determined as the orthogonal of $\Wt(\VV)^\tau$. So we can recover $\Q\tau$ from its set of orthogonal weights in $\Wt(\VV)$ by the normalization hypothesis, and thus $\tau$ by indivisibility and dominance.

\end{remark}

Let $\tau$ be a dominant one-parameter subgroup of $T=T_{\ud}$. 
Let $\Fc$ be the face of $X_{*}^+(T)$ containing $\tau$ in its interior. 
For $k\in [\![1,s]\!]$, we define $\bar d_k=\dim T_{\Fc} \cap \GL(V_k)$ so that $T_{\Fc}$ identifies 
with $T_{\bar \ud}$. Under this identification, the regular element $\bar\tau\in X_*(T_{\bar \ud})$ corresponding 
to $\tau\in X_*(T_{\ud})$ is obtained by merging repetitions of coefficients in the $\epsilon^{k,*}_i$. 

\begin{exple}\label{ex:taubar}
  If $d=(5,5,5,1)$ and
  $$\begin{array}{ll}
  \tau&=(3\,2\,2\,0\,0\,|\,2\,2\,0\,0\,0\,|\,3\,-1\,-1\,-1\,0\,|\,5)\\
  &= 3\epsilon^{1,*}_1+2\epsilon^{1,*}_2+2\epsilon^{1,*}_3+2\epsilon^{2,*}_1+2\epsilon^{2,*}_2\dots
  \end{array}
  $$

  Then $\bar\tau=(3\,2\,0\,|\,2\,0\,|\,3\,-1\,0\,|\,5)$ and
  $\bar{\ud}=(3,2,3,1)$.
\end{exple}

The inclusion $T_{\bar\ud}\rightarrow T_{\ud}$ yields a projection $p_\Fc: X(T_{\ud})\rightarrow X(T_{\bar \ud})$ 
identifying some of the $\epsilon^k_i$.
Our main observation it that $p_\Fc(\Wt_T(\VV))$ only depends on $\bar\ud$ (and not on $\Fc$), and even more identifies 
with $\Wt(\VV_{\bar \ud})$.

In Section~\ref{sec:Step1}, Algorithm~\ref{algo:Sreg} has to be applied to any face $\Fc$ of $X_*^+(T)$. 
The observation allows us to replace this loop on the faces by a loop on the possible $\bar\ud$. 
For example, for Kron$(5,5,5)$, up to $\mathfrak S_3$-symmetry, it is sufficient to run over the $\bar\ud$
that are partitions contained in 5\,5\,5 and with no zero part. 
So, to apply Algorithm~\ref{algo:Sreg}, we run over 35 partitions, instead of running over the $816$ faces up to symmetries. 

Moreover, concerning Condition \eqref{eq:3weakkron}, the routine of Algorithm \ref{algo:Sreg} can be used directly for $\bar\ud$ thanks to the following Lemma. Note that we only need a necessary condition here since the goal of Step 1 is only to produce a finite list of candidates that is refined in the other steps.

\begin{lemma}\label{lem:chitoreg}
  If $\Wt(\VV_{\ud})^{\tau}$ spans an
  $s$-codimensional subspace in $\Q X(T_{\ud})$ then $\Wt(\VV_{\bar \ud})^{\bar \tau}$ spans an
$s$-codimensional subspace in $\Q X(T_{\bar \ud})$.
\end{lemma}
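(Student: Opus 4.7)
The plan is to show that the surjection $p_\Fc \colon X(T_\ud)_\Q \twoheadrightarrow X(T_{\bar\ud})_\Q$ sends $H_\ud := \Span\Wt(\VV_\ud)^\tau$ onto $H_{\bar\ud} := \Span\Wt(\VV_{\bar\ud})^{\bar\tau}$ with $\ker p_\Fc$ entirely contained in $H_\ud$; the conclusion is then a direct dimension count. The surjection part is straightforward: since $\tau \in X_*(T_\Fc)_\Q \hookrightarrow X_*(T_\ud)_\Q$ factors through $T_\Fc \cong T_{\bar\ud}$, one has $\langle\tau,\chi\rangle = \langle\bar\tau,p_\Fc(\chi)\rangle$ for every $\chi \in X(T_\ud)_\Q$. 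Combined with the observation stated just before the lemma that $p_\Fc(\Wt(\VV_\ud)) = \Wt(\VV_{\bar\ud})$, this yields $p_\Fc(\Wt(\VV_\ud)^\tau) = \Wt(\VV_{\bar\ud})^{\bar\tau}$, hence $H_{\bar\ud} = p_\Fc(H_\ud)$.

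The central step is to establish $\ker p_\Fc \subset H_\ud$. A set of generators of $\ker p_\Fc$ is given by the elements $\epsilon^k_i - \epsilon^k_j$ with $\tau^k_i = \tau^k_j$; each such element visibly lies in $\ker\tau$ and is a difference of two actual weights in $\Wt(\VV_\ud)$, hence belongs to $H^{\bullet} := \Span\Wt(\VV_\ud) \cap \ker\tau$. A direct calculation shows that the annihilator of $\Span\Wt(\VV_\ud)$ in $X_*(T_\ud)_\Q$ is the $(s-1)$-dimensional sum-relation space $L := \Span\{\sum_i \epsilon^{k,*}_i - \sum_i \epsilon^{1,*}_i : 2\le k\le s\}$, so that $\Span\Wt(\VV_\ud)$ has codimension $s-1$. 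Provided $\tau \notin L$, adjoining the constraint $\ker\tau$ drops the codimension by exactly one more, yielding $\dim H^{\bullet} = \sum_k d_k - s = \dim H_\ud$ by hypothesis; combined with the inclusion $H_\ud \subset H^{\bullet}$, this forces $H_\ud = H^{\bullet} \supset \ker p_\Fc$. The conclusion then reads
$$\dim H_{\bar\ud} = \dim H_\ud - \dim\ker p_\Fc = \Bigl(\sum_k d_k - s\Bigr) - \sum_k(d_k - \bar d_k) = \sum_k \bar d_k - s.$$

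The main point to verify is the non-degeneracy $\tau \notin L$. Concretely, $\tau \in L$ would mean that for each $k$, the coefficients $\tau^k_i$ are constant in $i$, with the resulting $s$ constants summing to zero. But then every weight of $\VV_\ud$ pairs trivially with $\tau$, so $H_\ud = \Span\Wt(\VV_\ud)$ would have codimension only $s-1$ in $X(T_\ud)_\Q$, contradicting the hypothesis. Hence $\tau \notin L$ is automatic once the hypothesis holds, and the argument above applies.
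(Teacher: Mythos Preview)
Your proof is correct, but it takes a different and more laborious route than the paper's. Both arguments begin by observing that $\langle\tau,\chi\rangle=\langle\bar\tau,p_\Fc(\chi)\rangle$, whence $p_\Fc(\Wt(\VV_\ud)^\tau)=\Wt(\VV_{\bar\ud})^{\bar\tau}$ and so $H_{\bar\ud}=p_\Fc(H_\ud)$. From there the paper argues in two lines: the surjectivity of $p_\Fc$ immediately gives $\codim H_{\bar\ud}\le\codim H_\ud=s$, and for the reverse inequality one simply observes that the $s$-dimensional subspace $\QQ\bar\tau+X_*(Z_{\bar\ud})_\QQ\subset X_*(T_{\bar\ud})_\QQ$ annihilates $H_{\bar\ud}$. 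You instead establish the exact equality $\dim H_{\bar\ud}=\dim H_\ud-\dim\ker p_\Fc$ by proving the inclusion $\ker p_\Fc\subset H_\ud$; this forces you through the auxiliary space $H^\bullet=\Span\Wt(\VV_\ud)\cap\ker\tau$ and a dimension count showing $H_\ud=H^\bullet$. The paper's dual approach sidesteps this entirely. On the other hand, your argument has the merit of making the non-degeneracy condition $\tau\notin X_*(Z_\ud)_\QQ$ explicit and deriving it from the hypothesis, whereas the paper silently uses the equivalent fact that $\QQ\bar\tau+X_*(Z_{\bar\ud})_\QQ$ has dimension $s$ (rather than $s-1$) without comment.
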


\begin{proof}
 Note that, by construction, $\langle \tau, \chi\rangle=\langle \bar \tau,p(\chi)\rangle$. 
 Thus, the latter set is the projection of the former one and the codimension of its span cannot exceed $s$. Since the dimension $s$ subspace $\Q\bar \tau+X_*(Z_{\bar\ud})_{\QQ}$ lies in its orthogonal, the result follows.
\end{proof}

Given two weights $[a_k]_{k=1,\dots,s}$ and $[b_k]_{k=1,\dots,s}$ in $\Wt(\VV_{\bar \ud})$, we
have
$$[a_k]<_{\Fc} [b_k]\quad\iff\quad
\forall k=1,\dots,s\quad a_k\geq b_k\qquad {\rm and\ }[a_k]\neq [b_k].
$$

Adapting Algorithm~\ref{algo:Sreg} with
$T_\Fc$ replaced by $T_{\bar\ud}$ (and line \ref{line:Cp} replaced by ``$\codim \, \Span(S_0)=s$'')
requires some care on the implementation of Condition $(B'')$ given that $\Fc$ and hence $\dim(U(\tau))$ are not known.
The key to handle this is the following lemma
\begin{lemma}
  \label{lem:lowerbdimU}
Let $\tau\in X_*(T_{\ud})$ defining $\bar \ud$ such that $\bar \tau \in X_*(T_{\bar\ud})$ as above.
Let $\ue=(e_1\geq\cdots\geq e_s)$ be the nonincreasing reordering of $\bar \ud$.

Then $\dim U(\tau)\leqslant \left\lfloor\sum_{k=1}^s   \frac{d_k^2}2(1-\frac 1{e_k}) \right\rfloor$.
\end{lemma}

\begin{proof}
First, let us state $2$ combinatorial inequalities:
\begin{itemize}
\item Let $d,e\in \N^*$ with $e\leqslant d$. Let $\mathcal A_{d,e}=\{\underline{m}=(m_1, \dots, m_e)\in \N^e| \sum_i m_i=d\}$. 
As a consequence of the convexity of the square function, we have 
\begin{equation}
  \label{eq:combin1}
  \min_{\underline m\in \mathcal A_{d,e}}\left(\sum_i m_i^2\right)\geq \sum_{i=1}^e\left(\frac de\right)^2=
  \frac{d^2}{e}.
\end{equation}
\item Consider two tuples of nonnegative real numbers in nonincreasing order $x_1 \geqslant \dots \geqslant x_s$ and $y_1\geqslant \dots  \geqslant y_s$. We claim that
\begin{equation}\label{eq:combin2}
  \max_{\sigma \in \mathfrak S_s} \sum_k x_ky_{\sigma(k)}=\sum_k x_ky_k.
\end{equation} 
Indeed, assume that $\sigma\neq Id$, and let $i_0$ be minimal such that $\sigma(i_0)\neq i_0$. 
Letting $\sigma'=( i_0\;\; \sigma(i_0))\circ \sigma$, we have  
$\sum_k x_ky_{\sigma'(k)}-\sum_k x_ky_{\sigma(k)}=x_{i_0}y_{i_0}+x_{\sigma^{-1}(i_0)}y_{\sigma(i_0)}-x_{i_0}y_{\sigma(i_0)} -x_{\sigma^{-1}(i_0)}y_{i_0}=(x_{i_0}-x_{\sigma^{-1}(i_0)})(y_{i_0}-y_{\sigma(i_0)})$ 
which is nonnegative by the assumption on $i_0$.
By induction on $i_0$ the maximum is achieved for $\sigma=Id$.
\end{itemize}
We have $\dim U(\tau)=\frac 12 (\dim G_{\ud}-\dim G_{\ud}^{\tau})$. In each factor $\GL_{d_k}$ ($k\in [\![1,s]\!]$) of $G_{\ud}$, the stabilizer of $\tau$ is a Levi factor of the form $\prod_{i=1}^{\bar d_k}(\GL_{m_i})$ for some $(m_1\dots, m_{\bar d_k})\in \mathcal A_{d_k,\bar d_k}$. By \eqref{eq:combin1}, its dimension is lower bounded by $ {d_k^2}/{\bar d_k}$. The upper bound $\dim U(\tau)\leqslant \sum_k \frac{1}{2} (d_k^2-d_k^2/\bar d_k)=\sum_k(\frac{d_k^2}2(1-1/\bar d_k))$ follows.  To get the upper bound in terms of the $e_k$ instead of the $\bar d_k$, we apply \eqref{eq:combin2} with $x_k=d_k^2$ and $y_k=(1-\frac1{e_k})$.
\end{proof}

\bigskip
Let $\ud$ be a dimension vector. We now sum up the effective computation of
\[\Tau_{\ud}^{+}:=\left\{\tau\in X_*^+(T/Z) \textrm{ satisfying conditions \eqref{eq:3weakkron} and \eqref{eq:2veryweak}}\right\}.\]
For $u\in \N$ we let 
\[
\Tau_{ \ud}^{++}(u)=\left\{\tau\in X_*^{++}(T/Z) \,:\,
\begin{array}{c}
  \mbox{satisfying }\eqref{eq:3weakkron} \\ 
  \sharp\{\chi\in\Wt(\VV_{\ud})\,:\,\langle\chi,\tau\rangle>0\}\leq   u.
\end{array}\right\}.
\]

\begin{algorithm}[H]
    \caption{Computation of $\Tau_{\ud}^{+}$ in the Kronecker case}
    \label{algo:Tplus}
    \begin{algorithmic}[1]
     \State $\Tau^{+}:=\emptyset$ 
     \Statex\Comment{Initialization}
     \ForAll{$\ue\leqslant \ud$}    \label{line:ueud}
     \Statex\Comment{\emph{i.e.} $ \ue=\bar e_1\geqslant \cdots \geqslant e_s\geqslant 1$; $\forall i  e_i\leqslant d_i$}
     \State Compute $u:= \left\lfloor\sum_{k=1}^s   \frac{d_k^2}2(1-\frac 1{e_k}) \right\rfloor$ \Comment{see Lemma~\ref{lem:lowerbdimU}} 
    \ForAll{$\tau'\in \Tau_{\ue}^{++}(u)$}
    \Statex\Comment{computed with Algorithm~\ref{algo:Sreg} improved with Condition \eqref{eq:2veryweak}} \label{line:foralltauprime}
       \ForAll{$\tau \in X_*(T_{\ud}/Z)$ such that $\bar \tau$ is a permutation of $\tau'$ \label{line:extend}}
           \If{$\tau$ satisfies condition \eqref{eq:2veryweak} and \eqref{eq:3weakkron}}
          \State{Add $\tau$ to $\Tau^+$}
          \EndIf
\EndFor
\EndFor
\EndFor
    \end{algorithmic}
\end{algorithm}

\subsection{The fermionic (and bosonic) cases}

{\bf Symmetries.}
In the fermionic case $\VV=\bigwedge^rV$ with $\dim(V)=2r$, $\VV$ is
self dual as a $\SL(V)$-representation. As a consequence, the set of
inequalities is stable by $\tau\mapsto \left(\frac{|\tau|}
r-\tau_{2r+1-i}\right)$.

For any $r$, duality also gives a way to check the implementation by comparing $\bigwedge^rV$ with 
$\bigwedge^{\dim V-r}V$. 

\bigskip

\noindent {\bf Step 1.}
In fermionic and bosonic cases, there is a way to group faces similarly to what is done in the Kronecker case. 

Set $n=\dim(V)$. Given a face $\Fc$, we can define a partition $\lambda_{\Fc}$ of $n$ encoding the repetitions in the entries of any $\tau\in \Fc$.
The torus $T_\Fc$ as well as the multiplicity function $m_{\Fc}\,:\,\Wt_{T_\Fc}(\VV)\longto \NN)$ 
only depend on this partition. 

So, instead of running over the $2^{n-1}$ faces of $X_*^+(T)$, we run the analogue of line \ref{line:ueud} of Algorithm \ref{algo:Tplus} over the partitions of $n$.

\section{Refinements in Steps~3 and 4} 

\subsection{Inversions set in type A}
\label{sec:raf3}

We assume here that $G=\GL_n$ and that $T$ is its subgroup of diagonal matrices.
A dominant one-parameter subgroup $\tau\in T$ yields a $\C^*$-action on $\C^n$ with weights $l_1>\dots>l_p$.
Denoting the weight space of weight $l_i$ by $V^{l_i}$, one gets $\Lie\;  U(\tau)=\bigoplus_{i<j} \Hom(V^{l_j}, V^{l_i})$ (upper block diagonal matrices).
For $w\in W^{P(\tau)}$, the inversion set $\Phi(w)$ corresponds to the data of one (bottom-left justified) partition for each block. 
The partition corresponding to the block $\Hom(V^{l_j}, V^{l_i})$ has length at most $\dim V_{l_i}$ and first part is at most $\dim V_{l_j}$. 

To construct the sets of inversions subject to our constraints, we perform a tree search, going through the matrix of blocks in such a way that $j-i$ increases (from the diagonal to the top corner).
When we reach the $(i,j)$ block, convexity and coconvexity constraints are determined by the partitions chosen in the $(i,k)$ and $(k,j)$ blocks (for various $k$). 
These constraints impose that the current partition $\lambda$ must satisfy $\lambda_{\rm in} \subset \lambda\subset \lambda_{\rm out}$ (where $\lambda_{\rm in}$ and $\lambda_{\rm out}$ are explicit). 
Moreover, the condition on the cardinality at level $l$ from \eqref{eq:wellcnum} imposes conditions on $|\lambda|$.
The choice of $\lambda$ replaces the choice of a single $\alpha\in \Phi_?$ in line \ref{line:choicealpha}, 8, 11 of Algorithm \ref{algo:W} while the partial computation of the partitions $\lambda_{in}$ and $\lambda_{out}$ in each forthcoming block plays the role of lines 6, 7, 9, 10.

In the Kronecker case, $G=G_{\ud}$ and we apply this method successively for each $\GL_{d_i}$. 
Constraints on multiplicities are propagated.

\subsection{Precomputation in Step 4}
\label{sec:raf4}

In this step, we compute the rank of matrices at some random points in some $\VV^\tau$. 
A first computational trick consists in fixing few random points $x_0$ in the whole $\VV$. We encode the tangent map of the orbit map $U\times \VV \rightarrow \VV$ at $(e,x_0)$ by an array of dimensions $(\dim \VV, \dim \VV, \dim U)$ with the coefficient of $(x_0)_{\chi}$ in position $(\chi, \chi', \alpha)$ whenever $\alpha + \chi=\chi'$

For each pair $(\tau, w)$ we choose as random point $x\in\VV^\tau$ the one obtained by slicing the coordinates of $x_0$ (since $\VV^\tau$ is a coordinate subspace). Then, the matrix of $T_{(e,x)}\pi$ is obtained by slicing the 3D-array and then summing along one axis. 

In our Python-Sage implementation, this precomputation of the 3D-array common to all pairs $(\tau,w)$ followed by the efficient slicing in each case helps to reduce significantly the time complexity.
\section{Refinements in birationality Step 5}
\label{sec:raf5}

As explained in Section~\ref{sec:Step5} our aim is to decide whether the ramification divisor of $\bar\pi$ 
is contracted or not. 

Recall that the contraction of the Schubert divisors of the form $D_{\beta}$ is checked via the computation of a rank of a tangent map $T_{(e,x)}\pi_v$. In particular, the precomputation suggested in Section~\ref{sec:raf4} also allows some time-saving in this context.

We now focus on the divisors related to $\pi$ itself. 
By Theorem~\ref{th:ram0}, we have to compute $\delta$, and then the reductions of $T_{\varphi(z)} J_{red}$ and $V_N(z)$ in 
the quotient ring $\CC[z]/(\delta)$. 
Some tricks avoid some of the most expensive intermediate computations in the naive approach regarding 
Section~\ref{sec:Step5} (see Remark~\ref{rem:Taylorutile}).

\begin{enumerate}
  \item \label{compJbarv2}
 Up to a scalar,  the polynomial $ J_{red}$ can be computed using the following formula:
  $$
  J\propto J_{red}\times \gcd(J,\frac{\partial J}{\partial x_1},\dots,\frac{\partial J}{\partial x_N}).
  $$
  This method is faster than decomposing $J$ as a product of irreducible factors.
  \item \label{compdeltav2} Computing $\delta$. By substitution in $M$ one gets the matrix $ M_z$ of $T_{\varphi(z)}\pi$. 
  For $i=1,\dots,N$, let $\Delta_i$ be the gcd of the minors of the matrix $M_z$ of size $i$. 
  In particular, $J\circ\varphi=\Delta_N$. Recall that (up to a scalar)
  \begin{equation}
   \label{eq:si}
   \forall i=1,\dots,N\quad s_i=\frac{\Delta_i}{\Delta_{i-1}},
  \end{equation}
where $\Delta_0=1$. 
To compute $\delta$ with formulas~\eqref{eq:defdelta} it is sufficient to know $s_N$ and $s_{N-1}$. 
So, by formula~\eqref{eq:si}, it is sufficient to compute $\Delta_{N-1}$ and $\Delta_N=J\circ\varphi$. 
This method is faster than applying Smith's algorithm.
\item \label{compdeltav3} Still computing $\delta$. Actually, our program uses an alternative method to compute $\delta$ even faster, 
assuming that $\pi$ and $\varphi$ are defined over $\QQ$. 
First, recall from the proof of Theorem~\ref{th:ram0} that $\delta\in \Q[z]$ satisfies
$$
\delta=\prod_j(z-\zeta_j)
$$
where $\{\zeta_1,\dots,\zeta_s\}$ is the set of $z\in\CC$ such that $\rk( M_z(z))=N-1$.
Factor 
$$
\frac{\Delta_N}{\gcd(\Delta_N,\Delta_N')}=\delta_1\cdots\delta_p,
$$
with $\delta_i\in\QQ[z]$ irreducible.
Then,
$$
\delta=\prod_{j\in {\mathcal J}}\delta_j,
$$
where ${\mathcal J}$ is the set of $j$ such that the matrix $M_{z,\delta_j}$ obtained from $ M_z$ by 
reducing each coefficient in the number field $\QQ[z]/(\delta_j)$ has corank 1. 
This allows us to compute $\delta$ as follows.

\begin{algorithm}[H]
    \caption{Computation of $\delta$}
    \label{algo:delta}
    \begin{algorithmic}[1]
\State{Use the block triangular shape of $ M_z=M\circ\varphi$ to write 
$\Delta_N:=J\circ\varphi=\Delta_{N,1}\times\cdots\times\Delta_{N,p}$.
\Statex\Comment{\it This needs computations of determinants in $\QQ[z]$.}} \label{Jz}
\State{Factor each $\Delta_{N,i}$ in $\QQ[z]$.}
\State{Initialize the list $F_\delta$ of the factors of $\delta$ to the empty list.}
\State{For each factor $\delta'$ of some $\Delta_{N,i}$, add $\delta'$ to $F_\delta$ 
if $\delta'^2$ does not divide $\Delta_N$ or $ M_z$ modulo $\delta'$ has corank 1.
\Comment{\it if $\delta'$ appears in $s_{N-1}$ then $(\delta')^2|s_{N-1}s_N$}
\Statex\Comment{\it Here we compute the rank of matrices in a number field.}
}
\State{We get $\delta$ as a product of irreducible polynomials in $\QQ[z]$.}
   \end{algorithmic}
\end{algorithm}

\begin{remark}\label{Hilbert_irred}

Assume that $\pi$ is defined over $\QQ$ and use notation of the
proof of Theorem~\ref{th:ram0}. 
Consider an irreducible factor $h$ of $J$ over $\QQ$.

The associated divisor is either a sum of $D_i$'s or a sum of $E_i$'s.
Indeed, it cannot mix the two types of divisors since the condition
that distinguishes these two types is defined over $\QQ$.

Observe that, in the case of the $D_i$'s, Hilbert's irreducibility theorem (see e.g. \cite{FJ}), implies that
$h\circ \varphi$ is one of the factor $\delta'$.
\end{remark}

\item \label{compVNv2} Computing $V_N$. Smith's algorithm gives a way to compute $V_N$. We present here a faster method.
Think about $ M_z$ as an endomorphism of the $\CC[z]$-module $\CC[z]^N$ and consider the induced endomorphism 
$M_{z,\delta}$ of the $\CC[z]/(\delta)$-module $(\CC[z]/(\delta))^N$. 
By construction, the Kernel of $ M_{z,\delta}$ is a free $\CC[z]/(\delta)$-module of rank one generated by the class of $V_N(z)$.
Note that $M_{z,\delta}$ is also a $\CC$-linear map and that its Kernel has dimension 
$\deg(\delta)$ as a $\CC$-vector space. 
So $V_N$ can be determined by computing the kernel of a square matrix of size $(N\deg(\delta))$.
\item \label{compVNv3} Still computing $V_N$. Assume now that $\pi$ is defined over the rational numbers $\QQ$ and choose $(a,b)$ with 
coordinates in $\QQ$, too. 
Then, one can take for $V_N$ a generator of the Kernel of $M_{z,\delta,\QQ}$, the endomorphism of  $(\QQ[z]/(\delta))^N$.
Factor $\delta=\delta_1\dots \delta_p$ in $\QQ[z]$, with the $\delta_i$ irreducible (and pairwise distinct by construction).
We now use the Chinese remainder theorem:
\begin{equation}
\label{eq:chinois}
\QQ[z]/(\delta)\simeq \QQ[z]/(\delta_1)\times \cdots\times \QQ[z]/(\delta_p).
\end{equation}
Observe that, by the extended Euclidean algorithm, this isomorphism and its converse can be made perfectly explicit. 
Set $K_i=\QQ[z]/(\delta_i)$ that is a field. 
Then, $M_{z,\delta,\QQ}$ can be written as $M_{z,\delta,\QQ}=(M_1,\dots,M_p)$, where $M_i$ is an endomorphism of $K_i^N$.
From the $\QQ$-linear point of view, this amounts to a change of basis to write  $ M_{z,\delta,\QQ}$ as a bloc diagonal matrix. 
Now, compute a generator $V^i$ of the Kernel $M_i$ that is a $1$-dimensional $K_i$-vector space. 
Then, the pull-back of $(V^1,\dots,V^p)$ by the isomorphism~\eqref{eq:chinois} is $V_N$.  
\item\label{modulodeltai}
The next trick is to replace the condition of Theorem~\ref{th:ram0} by the equivalent one
\begin{equation}
  \label{condram0deltai}
  \forall \mbox{ irreducible factor }\delta_i\mbox{ of }\delta\qquad
  T_{\varphi(z)} J_{red}(V_N(z))=0\mbox{ in }\QQ[z]/(\delta_i) .
\end{equation}
Observe that by Algorithm~\ref{algo:delta}, the factors $\delta_i$'s are already computed.
First, this reformulation allows us to work only with matrices of coefficients in the number fields $K_i:=\QQ[z]/(\delta_i)$.
Secondly, splitting our condition into multiple ones allows us to stop checking at soon as one of the tests fails.
In some cases, this makes it unnecessary to compute the complete list of factors of $\delta$. 

\item Computing $T_{\varphi(z)} J_{red} \bmod \delta_i$.
Start by fixing a set $\xi_1,\dots,\xi_N$ of new variables that represent coordinates on tangent spaces. 
For any multi-index $\ualpha=(\alpha_1, \dots, \alpha_N)$, denote by  
 $D^{\ualpha}=\frac{1}{\alpha_1! \dots \alpha_N!}\partial^{|\ualpha|}/(\partial x_1)^{\alpha_1}\dots (\partial x_N)^{\alpha_N}$, the differential operator on $\QQ[x_1,\dots,x_N]$.
Here $|\ualpha|=\alpha_1+\cdots+\alpha_N$.
Set also  $ \xi^\ualpha=\prod_j\xi_j^{\alpha_j}$ and $
\mathcal D^sf(x,\xi)=\sum_{|\alpha|=s}  D^{\ualpha} f (x)\;  \xi^\ualpha
$, is a degree $s$ homogeneous polynomial in the variables $\xi_j$'s with coefficients in $\QQ[x_1,\dots,x_N]$.

It appears in the Taylor expansion of $f\in\QQ[x_1,\dots,x_N]$:
\[f( x+ \xi)=\sum_{s\in \N} \mathcal D^sf ( x, \xi). \]
For example, $T_{\varphi(z)} J_{red}=\mathcal D^1J_{red}\circ\varphi$, where the precomposition stands for the variables $x_i$'s.
For a product $f=f_1\dots f_p$, we have 
$$
\mathcal D^{s}f= \sum_{s_1+\dots+ s_p=s}\;\;\; \prod_{j=1}^p (\mathcal D^{s_i} f_j).
$$  
In particular, if $f=h^kg$, then 
\begin{equation} \label{partial_s}
\mathcal D^{s}f \in \left\{\begin{array}{ll} h\Q[x_1, \dots x_N] & \mbox{if }k<s\\ 
(\mathcal D^{1}h)^s \times g+h\Q[x_1,\dots,x_N]& \mbox{if }k=s \end{array}\right.
\end{equation}

Fix now an irreducible factor $\delta_i$ of $\delta$, and set $K_i=\QQ[z]/(\delta_i)$. 
Let $\ell$ denote the reduction of $\mathcal D^1J_{red}\circ\varphi$ modulo $\delta_i$.
It is a homogeneous polynomial of degree one in the $\xi_j$'s with coeffcients in $K_i$.
We aim to compute $\ell$ up to a nonzero constant in $K_i$ (see condition~\eqref{condram0deltai} to be checked).

By genericity of $\varphi$, $\delta_i$ divides exactly one irreducible factor of $J$ precomposed with $\varphi$. 
Thus, one can write $J=h^sg$ where $\delta_i$ divides $h\circ\varphi$ and does not divide $g\circ\varphi$. 
Note that Remark \ref{Hilbert_irred} implies that $\delta_i=h\circ\varphi$, but this is not used here. 

Since $h$ appears in $J_{red}$ with multiplicity one,  \eqref{partial_s} implies that 
$\ell$ is the reduction in $K_i$ of $\mathcal D^1h\circ\varphi$
up to a nonzero scalar in $K_i$. 

Again by \eqref{partial_s}, the class $\overline{\mathcal D^sJ\circ\varphi}$ of $\mathcal D^sJ\circ\varphi$ 
in $K_i[\xi_1,\dots,\xi_N]$ is $\ell^s$, up to a nonzero scalar in $K_i$.

Assume for a moment that $\overline{\mathcal D^sJ\circ\varphi}$  has been computed. 
Then, choose a variable $\xi_{j_0}$ such that $\xi_{j_0}$ (or equivalently $\xi_{j_0}^s$) appears in it. 
Now, $\ell$ is easy to deduce from the coefficients of the various monomials $\xi_{j_0}^{s-1}\xi_j$. 

\smallskip
It remains to compute $\overline{\mathcal D^sJ\circ\varphi}$. Fix $u$ in $w^{-1}U^-w\cap U$. By multiplication by $u$ on the first factor and by equivariance of $\pi$, we can identify $T_{(e,x)}$ with $T_{(u,x)}\pi$. For our maps $\pi$, observe that $ T_{(e,\bullet)}\pi=(x\mapsto T_{(e,x)}\pi)$ is linear. Denote by $M=M(x)$ the matrix of $T_{(e,x)}\pi$ in a fixed basis. We have $J=\det(M)$ and $J(x+\xi)=\det (M(x)+M(\xi))$, so by composition of Taylor series we get
\begin{equation}
 \label{Dsdet}\mathcal D^sJ(x,\xi)=\mathcal D^s(\det)(M(x), M(\xi)).
\end{equation}
It can easily be computed from the minors of $M$.  More precisely, in degree $s$, the monomials in each minor of size $s$ of $M(\xi)$ are coefficiented by the complementary minor of $M(x)$ of cosize $s$.

An important point is that this computation of $\mathcal D^sJ$ commutes with both the precomposition by $\varphi$ 
and the reduction modulo $\delta_i$. 
Consequently, $\overline{\mathcal D^sJ\circ\varphi}$ can be computed from the collection of cosize $s$ minors of the 
reduction of $T_x\pi$ in $K_i$. 
The most time-consuming step in the outlined method is this computation of minors of a matrix over $K_i$. 

\bigskip
This algorithm can be improved by observing that $\ell$ only depends on $h$. 
Indeed, if $M_{(k)}$ is any diagonal block of the matrix $M$ of $T_x\pi$ such that $\delta_i$ divides $\det(M_{(k)}\circ\varphi)$ , \emph{i.e.} $h$ divides $\det(M_{(k)})=:J_{(k)}$, 
then $M$ can be replaced by the smaller $M_{(k)}$ in the previous discussion. 

The main steps of this method can be summarized as follows. 

\begin{algorithm}[H]
\algblock[Name]{try}{EndTry}
\algtext*{EndTry}%
    \caption{Computing $\ell\equiv T_xJ_{red}\circ \varphi \pmod {\delta_i}$ up to scalar}
    \label{algo:ell}
    \begin{algorithmic}[1]
\State{Consider the diagonal blocks $M_{(k)}$ of $T_x\pi$ such that $\delta_i$ is an irreducible factor  of $\det M_{(k)}\circ\varphi$.} 
\State{Compute the maximal integers $s_k$ such that $\delta_i^{s_k}$ divides $\det M_{(k)}\circ\varphi$.}
\State{Choose $M_{(k)}$ so that $s=s_k$ is minimal, and subsequently so that size$(M_{(k)})$ is minimal.}
\State{Reduce $M_{(k)}\circ\varphi$ modulo $\delta_i$ to get $\bar M_{(k)}$ with entries in $K_i$.}
\State{Compute all the size$(M_{(k)})-s$ minors of $\bar M_{(k)}$.}
\State{Deduce $\overline{\mathcal D^sJ_{(k)}\circ\varphi}$.} \Comment{using discussion below \eqref{Dsdet}}
\State{Extract $\ell$ from the relation $\overline{\mathcal D^sJ_{(k)}\circ\varphi}=\ell^s$.}
   \end{algorithmic}
\end{algorithm}

\end{enumerate}
\begin{remark}\label{rk:component_Vtauneg}
In \eqref{Dsdet}, we have exploited the fact that $T_{(e,\bullet)}\pi$ is linear. 
For maps other than $\pi$, we can adapt formula~\eqref{Dsdet} by composing Taylors series.
\end{remark}

\begin{remark}
\label{rem:Taylorutile}
The improvements presented in this section are essential to deal with examples beyond $Kron(5,5,5)$. 
Indeed, the computation of $J$ alone quickly becomes problematic, as it involves calculating the determinant 
of a matrix whose entries are linear forms in $\dim\VV^\tau$ variables.
Evaluating this determinant in the ring $\Q[x_1,\dots,x_N]$ soon proves infeasible.
In contrast, Algorithm~\ref{algo:ell} carries out all calculations in a number field.
\end{remark}

\section{An example}
\label{sec:exple}

Consider the cone Kron$(3,3,3,1)$ with $\tau=(2\ 1\ 0\ |\ 2\ 1\ 0\ |\ 3\ 2\ 0\ |\ -4)$ and $w=(w_0,w_0,w_0)$. 

The map $\pi$ is 
$$
\pi\,:\,U\times \VV^{\tau\leq 0}\longto \VV.
$$
By convention here, we start all indices at $0$ so that $[0,0,0]=000$ is the highest weight of $\VV$ and $222$ is its lowest weight. The action of $\tau$ on the weights of $\VV$ is given by
$$
\scalebox{0.9}{$\begin{array}{|c|c|c|c|c|c|c|c|}
\hline
3&2&1&0&-1&-2&-3&-4\\
\hline
000&001,&011, 020, 101,&002,021,111,&012,102,121,&022,112,&122,&222\\
&010,100&110, 200&120,201,210&211, 220&202, 221&212&\\
\hline
\end{array}$}
$$
One can check that the 6 weights of $\VV^\tau$, that is orthogonal to $\tau$, span a 6-dimensional space.
Here, $\tau$ is regular and $K^\tau$ is the maximal compact subgroup. 
Hence, the condition~\ref{ass:Kron3} is satisfied since it is equivalent to \eqref{eq:3weak}.
Moreover, $\VV^{\tau>0}$ is isomorphic to $\lu$ as a $\tau$-module, according to  
\eqref{eq:wellcnum} with $(w_0,w_0,w_0)$. 
So, Steps 1 to 3 are passed by this example. 

At a point $(e,x)$ where $x\in\VV^{\tau\leq 0}$ with coordinates $v_{002},v_{021}\dots$, the matrix of
$T_{(e,x)}\pi$ is of the form
$$
M=\begin{pmatrix}
  A&0&0\\
  B&I_{6}&0\\
  C&0&I_{12}
\end{pmatrix}
$$
where the blocks in columns correspond respectively to $\lu,\VV^\tau,\VV^{\tau<0}$ and 
blocks in rows correspond to  $\VV^{\tau>0},\VV^\tau,\VV^{\tau<0}$. 
Moreover, here 
$$
A=
\scalebox{0.9}{$\begin{pmatrix}
  v_{002} & 0 & 0 & 0 & 0 & 0 & 0 & 0 & 0 \\
0 & v_{201} & v_{021} & v_{002} & 0 & 0 & 0 & 0 & 0 \\
v_{012} & v_{210} & 0 & 0 & 0 & 0 & 0 & 0 & 0 \\
v_{102} & 0 & v_{120} & 0 & 0 & 0 & 0 & 0 & 0 \\
0 & v_{211} & 0 & v_{012} & v_{111} & 0 & 0 & v_{021} & 0 \\
v_{022} & v_{220} & 0 & 0 & v_{120} & 0 & 0 & 0 & v_{021} \\
0 & 0 & v_{121} & v_{102} & 0 & v_{201} & v_{111} & 0 & 0 \\
v_{112} & 0 & 0 & 0 & 0 & v_{210} & 0 & v_{120} & v_{111} \\
v_{202} & 0 & v_{220} & 0 & 0 & 0 & v_{210} & 0 & v_{201}
\end{pmatrix}$}
$$
and
$$
B=
\scalebox{0.9}{$\begin{pmatrix}
  0 & v_{202} & v_{022} & 0 & v_{102} & 0 & v_{012} & 0 & 0 \\
  0 & v_{221} & 0 & v_{022} & v_{121} & 0 & 0 & 0 & 0 \\
  0 & 0 & 0 & v_{112} & 0 & v_{211} & 0 & v_{121} & 0 \\
  v_{122} & 0 & 0 & 0 & 0 & v_{220} & 0 & 0 & v_{121} \\
  0 & 0 & v_{221} & v_{202} & 0 & 0 & v_{211} & 0 & 0 \\
  v_{212} & 0 & 0 & 0 & 0 & 0 & 0 & v_{220} & v_{211}
\end{pmatrix}$}.
$$

$A$ is block triangular with blocks of size $1,3,5$ (that is, the multiplicities of the action of $\tau$ on $\VV^{\tau>0}$).
Denote by $A_1$, $A_2$ and $A_3$ the diagonal blocks. 
Thus,
$$
J=\det M=\det A = \prod_i \det A_i= 3 v_{002}^{2} v_{021} v_{111} v_{120}^{2} v_{201} v_{210}^{2},
$$
and
$$
 J_{red}=3v_{002} v_{021} v_{111} v_{120} v_{201} v_{210}.
$$
The fact that $J$ is nonzero ensures that the inequality passes step 4 in symbolic method.\\

Here, in the partial compactification of $\pi$, we add 6 Schubert divisors. 
These 6 are associated to simple roots. 
By Lemma~\ref{lem:posalphaR} these divisors are not contained in the ramification locus. 
Thus, the first step of Step~5  is trivial.

Observe that the $A_i$, and hence $J$, only depend on variables of $\VV^\tau$ as expected by Lemma~\ref{lem:wellc}. 
Equivalently, the ramification divisor is $U\times R_0\times V^{\tau<0}$ for some divisor $R_0$ in $\VV^\tau$.
The gradient of $J_{red}$ in coordinates $\dots, v_{002},v_{012}\dots$ is the row vector
$$
\begin{array}{r r c c}
  L=3(0\;\;, \qquad \dots\qquad0\;\;, &v_{021} v_{111} v_{120} v_{201} v_{210}, &  v_{002} v_{111} v_{120} v_{201} v_{210}, \\
  v_{002} v_{021} v_{120} v_{201} v_{210}, &
  v_{002} v_{021} v_{111} v_{201} v_{210}, & 
   v_{002} v_{021} v_{111} v_{120} v_{210}, \\ v_{002} v_{021} v_{111} v_{120} v_{201},&0\qquad\qquad,&\qquad \quad \dots \quad \qquad 0).
\end{array}
$$

Thus, $R_0$ is the union of the 6 coordinate hyperplanes. 
After substituting $v_{002}$ by $0$ in $A$, one gets a matrix $\bar A$ of rank $8$ in the fraction field. 
This means that the component ${\operatorname{div}}(v_{002})$ is one $D_i$ (notation of the proof of Theorem~\ref{th:ram0}).
Working similarly with the 6 components we get that there are no $E_k$ here.

Similarly, define $\bar M$ and $\bar L$ by substituting $v_{002}$ by $0$.
One can check that the kernel of $\bar M$ does not lie in the kernel of $\bar L$.
This means that the component ${\operatorname{div}}(v_{002})$ is not contracted hence the inequality $\langle w\tau ,\cdot \rangle\leq 0$ does not define a facet of the moment cone. 
Actually, the 5 other components are contracted in this case.

\section{Filter 1: Belkale-Kumar-Ressayre's theorem}
\label{sec:BKR}

\subsection{Introduction}
    
    In \cite{BKR}, in a context different but similar to 
Theorem~\ref{th:Kron}, it is shown that certain multiplicities for the
representation theory of Levi $G^{\tau}$ are equal to 1.
In this section, we prove a variant of \cite[Theorem~1.4]{BKR} in
our affine setting. We use this result as a filter allowing us to 
eliminate some inequalities. 
This filter has two advantages over ramification: it is deterministic
and faster. Unfortunately, some superfluous inequalities pass through the filter. 
For example, for Kronecker $(4,4,4)$, 53 inequalities are eliminated on 183 truly superfluous.

\subsection{A statement}

Let $G$ be a connected reductive group acting on a representation $\VV$.

\begin{theo}
  \label{th:BKR}
  Assume that $\CC[\VV]^T=\CC$.
  Let $\tau$ be a dominant one-parameter subgroup of $T$ and $w\in
  W^{P(\tau)}$.
  Set
  \begin{equation}
    \label{eq:defchidet}
   \chi_{det}=-\sum_{\alpha\in \Phi(w)}\alpha+\sum_{
     \begin{subarray}{c}
       \chi\in \Wt(\VV)\\
       \langle\chi,\tau\rangle>0
     \end{subarray}}
   \chi.
  \end{equation}
  Assume that the map
$$
\begin{array}{cccl}
  \pi\,:\,&(w\inv U^-w\cap U)\times \VV^{\tau\leq 0}&\longto&\VV\\
&(g,v)&\longmapsto&gv
\end{array}
$$ 
satisfies Conditions \ref{ass:Kron1} and \ref{ass:Kron2} of Theorem~\ref{th:Kron}.

    Let $\bar w\in W^{G^{\tau}}$ be defined by $w\inv B^-w\cap
    G^{\tau}=\bar w\inv (B^-\cap G^{\tau})\bar w$.

    Then, for any $n\geq 1$, we have
    \begin{equation}
    \dim\bigg(\CC[\VV^{\tau}]^{(B^-\cap
      G^{\tau})_{n \bar w\chi_{\det}}}
    \bigg)=1.\label{eq:dimVL}
  \end{equation}
\end{theo}

\bigskip
\begin{proof}
  We first observe that the restriction of $\det(T\pi)$ provides a nonzero element in the space
  of \eqref{eq:dimVL} for $n=1$. Then it is sufficient to prove that,
  for any $n$, this space is contained in a one dimensional space.
  We make this by push-forwarding the situation on $\VV$ using
  $\pi$. This will work because the out of control part of $\VV$ has
  codimension at least two, and hence does not contribute to the
  regular functions.

Consider
  $$
\bar\pi\,:\,\overline{w\inv B^-wP(-\tau)}\times_{w\inv B^-w\cap P(-\tau)}\VV^{\tau\leq 0}\longto\VV,
$$
as in Section~\ref{sec:bircompact}.
Recall that $\bar\pi$ is proper. But it is also dominant, so it is
surjective.

We now consider several restrictions of $\bar\pi$. Set

$$
\begin{array}{ll}
  X_w=w\inv \overline{B^-wP(-\tau)/P(-\tau)}&\mbox{the translated
                                                 Schubert variety;}\\[1em]
  X_w^s&\mbox{the smooth locus
         of }X_w;\\[1em]
  X_w^h=w\inv Q wP(-\tau)/P(-\tau)&\mbox{the homogeneous
                                                  locus of } X_w,\\[1em]
  &\mbox{where }Q=\{g\in G\,:\,g \overline{B^-wP(-\tau)}\subset
 \overline{B^-wP(-\tau)}\};\\[1em]
  X_w^\circ=w\inv B^- wP(-\tau)/P(-\tau)&\mbox{the translated
                                                 Schubert cell.}
\end{array}
$$

Consider
$$
\begin{array}{cccl}
\bar\pi^\dag\,:&\Xc^\dag:=\tilde X_w^\dag\times_{
              P(-\tau)}\VV^{\tau\leq 0}&\longto&\VV\\
            &[g:v]&\longmapsto&gv,
\end{array}
$$
where $\dag=s,h,\circ$ or is empty and $\tilde X$ denotes the pullback
in $G$ of $X\subset G/P(-\tau)$ by $G\longto G/P(-\tau)$.

\bigskip
Recall that $\bar\pi^\circ=\pi$. Consider now the boundary divisors
$D_\beta$ and the determinant bundle
$$
\Dc:=\bigwedge^N\Tc^*\Xc^s\otimes (\eta^s)^*(\bigwedge^N\VV),
$$
where $N=\dim(\VV)$.

\bigskip
The determinant of $T\bar\pi^s$ is a nonzero  section
$\sigma_0$ of $\Dc$.
Set $Q'=w\inv Qw$. 
Since $\Rc^s$ is stable by the parabolic subgroup $Q'$, there exists a
unique $Q'$-linearisation of $\Dc$ such that $\sigma_0$ is
$Q'$-invariant. 
If necessary, we replace $G$ by its universal covering. 

For any positive $n$, the map
$$
\begin{array}{cccl}
\iota_n\,:&H^0(\Xc^s,\Dc^{\otimes n})&\longto&\CC(\Xc^s)\\
&\sigma&\longmapsto&\frac{\sigma}{\sigma_0^n}
\end{array}
$$
is a $Q'$-equivariant embedding. The image of $\iota_n$ is
$$
H^0(\Xc^s,n\Rc^s)=\{f\in\CC(\Xc^s)\,:\,\div(f)+n\Rc\geq 0\},
$$
and is contained in $\CC[\Xc^h-\Rc^h]$.

\bigskip
Set $\Omega=\bar\pi^h(\Xc^h-\Rc^s)$ be the open subset of $\VV$ and $Z$ be
the union of the components of $\Xc-\Xc^h$ that have codimension at
least two. 
Since $\bar\pi$ is surjective, Lemma~\ref{lem:posbetaR} implies that the complement $\VV-\Omega$ is contained in 
$$
\bar\pi(\Rc)\cup \bar\pi(Z).
$$
By Zariski's main theorem, $\bar\pi(\Rc)$ has codimension at least
two. As a consequence $\VV-\Omega$ has codimension at least
two and, by Hartog's lemma, $\CC[\Omega]=\CC[\VV]$.

Still by Zariski's main theorem, the restriction of $\eta^h$ to
$\Xc^h-\Rc^s$ is an isomorphism on $\Omega$. Let
$\zeta\,:\,\Omega\longto\Xc^h$ denote its converse.
 Then, the linear map
$$
\zeta^*\circ\iota_n\,:\,H^0(\Xc^h,\Dc^{\otimes n})\longto \CC[\Omega]=\CC[\VV]
$$
is both injective and $Q'$-equivariant.
Since $\CC[\VV]^T=\CC$, this implies that 
\begin{equation}
  \label{eq:dimH0Q}
  \dim(H^0(\Xc^h,\Dc^{\otimes n})^{Q'})\leq 1.
\end{equation}

With this inequality, the following lemma ends the proof:

\begin{lemma}
\label{lem:isoH0Q}
The restriction map induces an isomorphism
$$
H^0(\Xc^h,\Dc^{\otimes n})^{Q'}\simeq \CC[\VV^{\tau}]^{(B^-\cap
      G^{\tau})_{ n \bar w.\chi_{\det}}}.
$$
\end{lemma}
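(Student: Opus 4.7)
The plan is to recognize $\Xc^h$ as a $Q'$-homogeneous vector bundle, apply Frobenius reciprocity to push $Q'$-invariant sections of $\Dc^{\otimes n}$ down to the fiber $\VV^{\tau\leq 0}$, and then reduce further to $\VV^\tau$ via the Levi decomposition of the isotropy. The element $\bar w$ only enters at the end, when the naturally occurring Borel $w^{-1}B^-w\cap G^\tau$ of $G^\tau$ is conjugated into the standard $B^-\cap G^\tau$ appearing in the statement.

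Setting $H:=Q'\cap P(-\tau)$, one first verifies that $\tilde X_w^h = Q'P(-\tau)$, so every $P(-\tau)$-orbit in $\tilde X_w^h\times\VV^{\tau\leq 0}$ has a representative modulo $H$ with first coordinate in $Q'$. This yields a $Q'$-equivariant isomorphism $\Xc^h\simeq Q'\times_H\VV^{\tau\leq 0}$ and in particular $X_w^h = Q'/H$. By Frobenius reciprocity, global $Q'$-invariant sections of any $Q'$-linearized line bundle correspond to $H$-equivariant sections of the restriction to the fiber. Since $\VV^{\tau\leq 0}$ is affine, $\Dc^{\otimes n}|_{\VV^{\tau\leq 0}}$ is a trivial line bundle whose $H$-equivariant structure is encoded by a character $\chi$, namely the action of $H$ on the fiber of $\Dc$ at the origin, so
\[
H^0(\Xc^h,\Dc^{\otimes n})^{Q'} \;\simeq\; \CC[\VV^{\tau\leq 0}]^{H}_{n\chi}.
\]

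To identify $\chi$, decompose $T_{(eH,0)}\Xc^h = \Lie(Q')/\Lie(H)\oplus\VV^{\tau\leq 0}$; since $d\eta_{(eH,0)}(X,v)=v$, one obtains $\Dc_{(eH,0)}\simeq \bigwedge^{\mathrm{top}}(\Lie(Q')/\Lie(H))^*\otimes\bigwedge^{\mathrm{top}}\VV^{\tau>0}$. The $T$-roots on $\Lie(Q')/\Lie(H)$ are the positively $\tau$-graded roots of $w^{-1}B^-w$, which are $-w^{-1}\Phi(w)$; summing them with the positive $\tau$-weights of $\VV$ from the second factor yields $w\cdot\chi = \chi_{\det}$. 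The Levi decomposition $H = H^\tau\ltimes U_H$ with $H^\tau := Q'\cap G^\tau$ and $U_H := Q'\cap U(-\tau)$ (using $G^\tau\subset P(-\tau)$) reduces $\VV^{\tau\leq 0}$-computations to $\VV^\tau$-computations: since $U_H$ lowers $\tau$-degree, the projection $\VV^{\tau\leq 0}\twoheadrightarrow\VV^\tau$ induces an isomorphism $\CC[\VV^{\tau\leq 0}]^{U_H}\simeq \CC[\VV^\tau]$, whence $\CC[\VV^{\tau\leq 0}]^H_{n\chi}\simeq \CC[\VV^\tau]^{H^\tau}_{n\chi}$. Finally, by the choice of $\bar w$, the Borel $H^\tau = w^{-1}B^-w\cap G^\tau$ equals $\bar w^{-1}B^-\bar w\cap G^\tau$; left-multiplying by $\bar w$ identifies this space with $(B^-\cap G^\tau)$-semi-invariants of character $n\bar w\chi_{\det}$.

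The main obstacles I anticipate are Step~3, where one must correctly identify the $T$-weights of $\Lie(Q')/\Lie(H)$ with a signed $w$-conjugate of $\Phi(w)$ and track $w$ carefully through the character computation, and the descent $\CC[\VV^{\tau\leq 0}]^{U_H}\simeq \CC[\VV^\tau]$, which requires a dimension count leveraging Condition~\ref{ass:Kron2} and the birationality of $\pi$ to ensure $U_H\cdot\VV^\tau$ is dense in $\VV^{\tau\leq 0}$.
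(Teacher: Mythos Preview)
Your overall architecture matches the paper's: identify $\Xc^h\simeq Q'\times_H\VV^{\tau\leq 0}$, use Frobenius reciprocity to pass to $H$-semi-invariants on $\VV^{\tau\leq 0}$, then descend to $\VV^\tau$ and conjugate by $\bar w$. Two points need correction, one minor and one substantive.

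First, the root bookkeeping: the $T$-roots on $\Lie(Q')/\Lie(H)$ are exactly $\Phi(w)$, not $-w^{-1}\Phi(w)$. Indeed $Q'\supset w^{-1}B^-w$ has root set containing $w^{-1}\Phi^-$, and the positively $\tau$-graded elements of $w^{-1}\Phi^-$ are precisely $\Phi^+\cap w^{-1}\Phi^-=\Phi(w)$ (a negative root cannot pair positively with the dominant $\tau$); a dimension count shows there are no others. Hence the fiber character is $\chi=\chi_{\det}$ directly, with no $w$-twist. Relatedly, $H^\tau=Q'\cap G^\tau$ is a parabolic of $G^\tau$ containing the Borel $w^{-1}B^-w\cap G^\tau$, not equal to it; the passage from parabolic to Borel semi-invariants uses that $\chi_{\det}$ extends to this parabolic.

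Second, and this is the real gap, your descent from $\VV^{\tau\leq 0}$ to $\VV^\tau$ via $\CC[\VV^{\tau\leq 0}]^{U_H}\simeq\CC[\VV^\tau]$ does not go through. The density of $U_H\cdot\VV^\tau$ in $\VV^{\tau\leq 0}$ would require $\dim(Q'\cap U(-\tau))\geq\dim\VV^{\tau<0}$, and Condition~\ref{ass:Kron2} only controls $\dim\VV^{\tau>0}$, not $\dim\VV^{\tau<0}$; there is no reason for this to hold. The paper instead observes that $\langle\chi_{\det},\tau\rangle=0$ (an immediate consequence of~\eqref{eq:wellcnum_BKR}, or argued via the $\tau$-equivariance of $\det T\pi$). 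Since $\tau(\CC^*)\subset H$ and the character has $\tau$-weight zero, any $H$-semi-invariant of weight $n\chi_{\det}$ on $\VV^{\tau\leq 0}$ is $\tau(\CC^*)$-invariant, hence constant along the contracting $\tau$-flow to $\VV^\tau$; this gives the isomorphism $\CC[\VV^{\tau\leq 0}]^{H}_{n\chi_{\det}}\simeq\CC[\VV^\tau]^{H^\tau}_{n\chi_{\det}}$ without any density hypothesis on $U_H$.
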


\begin{proof}
We first observe that the restriction of $\Dc$ to $\VV^{\tau\leq 0}$
is trivial as line bundle and linearized by the character
$\chi_{\det}$. 
But the determinant of $T\pi$ is a $\tau(\CC^*)$-invariant section
of $\Dc$ on $\Xc^0$. By assumption, it restricts as a $\CC^*$-equivariant function on $\VV^{\tau\leq 0}$ of weight
$\langle\chi_{\det},\tau\rangle$ that does not vanish identically on
$\VV^{\tau}$. 
Since $\tau$ acts trivially on $\VV^{\tau}$, we deduce that 
\begin{equation}
  \label{eq:chidettau0}
  \langle\chi_{\det},\tau\rangle=0.
\end{equation}

Recall that $\Xc^h=Q'\times_{Q'\cap P(-\tau)}\VV^{\tau\leq 0}$. 
Then
$$
H^0(\Xc^h,\Dc^{\otimes n})^{Q'}\simeq
H^0(\VV^{\tau\leq 0},\Dc^{\otimes n})^{Q'\cap P(-\tau)}.
$$ 
Using \eqref{eq:chidettau0}, \cite{GITEigen} implies that 
$$
H^0(\VV^{\tau\leq 0},\Dc^{\otimes n})^{Q'\cap P(-\tau)}\simeq
H^0(\VV^{\tau},\Dc^{\otimes n})^{Q'\cap G^{\tau}}.
$$
Since, on $\VV^{\tau}$, the restriction of $\Lc$ is the trivial
bundle linearized by $\chi_\det$, 
$$
H^0(\VV^{\tau},\Dc^{\otimes n})^{Q'\cap G^{\tau}}\simeq
\CC[\VV^{\tau}]^{(w\inv Q w\cap
      G^{\tau})_{ n \chi_{\det}}}.
$$
But $w\inv Q w\cap G^{\tau}$ is a parabolic subgroup of $G^{\tau}$
containing $w\inv B^-w\cap G^{\tau}$ and on which $\chi_{\det}$
extends. Then, it is well known that 
$$
\CC[\VV^{\tau}]^{(w\inv Q w\cap G^{\tau})_{ n \chi_{\det}}}=
\CC[\VV^{\tau}]^{(w\inv B^- w\cap
      G^{\tau})_{ n \chi_{\det}}}.
$$
By the definition of $\bar w$, we have
$$
\CC[\VV^{\tau}]^{(w\inv B^- w\cap
      G^{\tau})_{ n \chi_{\det}}}=
\CC[\VV^{\tau}]^{(\bar w\inv (B^-\cap
      G^{\tau})\bar w))_{ n \chi_{\det}}}\simeq
\CC[\VV^{\tau}]^{(B^-\cap
      G^{\tau}))_{ n \bar w\chi_{\det}}}.
$$
The lemma is obtained by composing all these five isomorphisms.
\end{proof}
\end{proof}

\subsection{The multiplicity (\texorpdfstring{\ref{eq:dimVL}}{eq:dimVL}) in the Kronecker case}

Let us come back to the situation of the Kronecker cone:
fix $\ud=(d_1,\dots,d_s)$, $\VV=V_1\otimes\cdots\otimes V_s$ and
$G_{\ud}=\GL(V_1)\times\cdots\times\GL(V_s)$.
Fix $(\tau,w)$ such that $\pi$ is dominant.
Recall from Section~\ref{sec:step1math}, the definitions of $\bar \ud$
and $\bar \tau$. Identify  $X_*(T)$ with $\ZZ^{d_1}\oplus\cdots\oplus\ZZ^{d_s}$.
Let $m\tau\in \NN^{\bar d_1}\oplus\cdots\oplus\NN^{\bar d_s}$ whoose
the entries are the number of repetition of the corresponding entry of
$\bar\tau$ in $\tau$.

\begin{exple}\label{exple:tau}
  Here, $\ud=(5,5,5,1)$ and
  $$
  \tau=(3,2,2,-1,-1\,|\,2,2,-3,-3,-3\,|\,3,-1,-1,-1,-2\,|\,5).$$
  Then $\bar\tau=(3,2,-1\,|\,2,-3\,|\,3,-1,-2\,|\,5)$,
  $\bar \ud=(3,2,3)$
  and $m \tau = (1,2,2\, |\,2,3\, \allowbreak |\,1,3,1)$.
\end{exple}

\bigskip
For $k=1,\dots,s$ and $1\leq i\leq \bar d_k$, define $\bar\tau[i,k]$
to be the $i^{\rm th}$-entry of the $k^{\rm th}$-component of
$\tau$. On the example $\bar\tau[2,2]=-3$ and $\bar\tau[3,2]=-1$.

Observe that for each $k=1,\dots,s$, the space $V_k$ decomposes as
$$
V_k=\sum_{i=1}^{\bar d_k} V_k^{\bar\tau[i,k]},
$$
where $V_k^{\bar\tau[i,k]}$ is the $m\tau[i,k]$-dimensional subspace
on which $\tau$ acts with weight $\bar\tau[i,k]$.

Set
$$
\Pc:=\{(i_1,\dots,i_s) \;:\; \sum_{k=1}^s \bar\tau[i_k,k]=0\}.
$$
Then
\begin{equation}
  \label{eq:4}
  \VV^{\tau}=\bigoplus_{(i_1,\dots,i_s) \in \Pc} \bigotimes_{k=1}^s V_k^{\bar\tau[i_k,k]}.
\end{equation}
Moreover,
\begin{equation}
  \label{eq:1}
  G_{\ud}^{\tau}\simeq \prod_{i,k \ s.t.\ i\leq \bar d_k}\GL(V_k^{\bar\tau[i,k]}).
\end{equation}

In particular, a polynomial irreducible representation of $G_{\ud}^{\tau}$
is the data of a tabular $\Nu$ of partitions indexed by pairs $(i,k)$ such that
$$
\forall k=1,\dots,s\quad\forall i=1,\dots,\bar d_k\quad \ell(\Nu[i,k])\leq m\tau[i,k].
$$
An example of $\Nu$ for the $\tau$ as in Example~\ref{exple:tau}:
$$
\Nu=
\begin{array}{cccc}
  3&2\geq 1&2\\
  4\geq1&3\geq 2\geq 1&4\geq 2\geq 2&6\\
  3\geq 1&&3
\end{array}
$$
Let us fix such a $\Nu$.

\smallskip

Notation $\Kron(\Lambda)$: Set $p=\sharp\Pc$. 
Let  $\Lambda=(\Lambda[j,k])_{1\leq j\leq p, 1\leq k\leq s}$ be a
rectangular table with partitions as entries. To each row, we associate the multiple
Kronecker coefficient associated to the $s$ partitions on it. 
We denote by $\Kron(\Lambda)$ the product of the so obtained $p$
Kronecker coefficients associated to the $p$ rows.\\

\smallskip 

Notation $\LR(\Nu,\Lambda)$:
Number the elements of $\Pc$ from $1$ to $p$.
Think about $\Pc$ as a matrix with entries in $\NN$, $p$ rows and $s$
columns.
The entry $\Pc[j,k]$ is the $k^{th}$ entry of the $j^{th}$ elements of
$\Pc$.

Fix a  position $(i,k)$ in $\Nu$ (that is, $1\leq k\leq s$ and $1\leq
i\leq\bar d_k$).
Consider the set 
$$
S_{i,k}:=\{(j,k)\,:\,\Pc[j,k]=i\}
$$ 
of positions in the column $k$ of $\Pc$ with $i$ as entry. 
Let $c_{i,k}$ denote the multiple Littlewood-Richardson coefficient
with $\Nu(i,k)$ as outer partition and $\{\Lambda[j,k]\,:\, (j,k)\in
S_{i,k}\}$ as inner partitions. 
Denote by $\LR(\Nu,\Lambda)$ the product of these $\sum_{k=1}^s\bar
d_k$ multiple LR-coefficients.

\begin{prop}
  \label{prop:multCVtauKron}
The $G_{\ud}^{\tau}$-irreducible representation of highest weight $\Nu$
appears in $S^\bullet \VV^{\tau}$ with multiplicity
$$
\sum_{\Lambda} \Kron(\Lambda).\LR(\Nu,\Lambda),
$$
where the sum runs over the $(\sharp\Pc \times s)$-tabulars of partitions 
$\Lambda$ satisfying
\begin{enumerate}
\item $\forall 1\leq j\leq \sharp\Pc\quad
\forall 1\leq k<k'\leq s\quad
|\Lambda[j,k]|=|\Lambda[j,k']|=:\delta(j)$;
\item $\forall  k=1,\dots,s\quad\forall i=1,\dots,\bar d_k\quad
\sum_{j {\rm\ s.t.\ }\Pc(j,k)=i}\delta(j)=|\Nu[i,k]|$;
\item $\forall 1\leq j\leq \sharp\Pc\quad
\forall 1\leq k<k'\leq s\quad\ell(\Lambda[j,k])\leq m\tau(\Pc[j,k],k)$.
\end{enumerate}
\end{prop}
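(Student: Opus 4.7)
The plan is to compute the decomposition of $S^\bullet \VV^\tau$ as a $G^\tau$-module by iteratively applying Schur--Weyl duality and the Littlewood--Richardson rule, matching the two levels of combinatorics to the two factors $\Kron(\Lambda)$ and $\LR(\Nu,\Lambda)$.

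First, I would use the direct sum decomposition~\eqref{eq:4} to write
$$
S^\bullet \VV^\tau \;\simeq\; \bigotimes_{j=1}^{p} S^\bullet\!\left(W_j\right),\qquad
W_j:=\bigotimes_{k=1}^s V_k^{\bar\tau[\Pc[j,k],k]},
$$
as a graded ring, where $j$ runs over the rows of $\Pc$. Since each $W_j$ is a tensor product of vector spaces on which the corresponding factors $\GL(V_k^{\bar\tau[\Pc[j,k],k]})$ of $G^\tau$ act independently, the multiple Schur--Weyl duality \eqref{eq:SWdualmK} applied separately in each $W_j$ yields
$$
S^{\delta(j)}W_j \;\simeq\; \bigoplus_{(\Lambda[j,1],\dots,\Lambda[j,s])}
g_{\Lambda[j,1],\dots,\Lambda[j,s]}\; \bigotimes_{k=1}^s S^{\Lambda[j,k]}V_k^{\bar\tau[\Pc[j,k],k]},
$$
where $\Lambda[j,k]$ ranges over partitions of $\delta(j)$ with $\ell(\Lambda[j,k])\leq m\tau[\Pc[j,k],k]$. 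This is precisely condition (1) together with the third condition, and produces the factor $\Kron(\Lambda)=\prod_j g_{\Lambda[j,1],\dots,\Lambda[j,s]}$.

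Next I would regroup the tensor product over $j$ of the above expansions, reorganising factors according to pairs $(i,k)$ using \eqref{eq:1}. For a fixed $(i,k)$, all Weyl modules over the same space $V_k^{\bar\tau[i,k]}$ collect into
$$
\bigotimes_{(j,k)\in S_{i,k}} S^{\Lambda[j,k]}V_k^{\bar\tau[i,k]},
$$
whose decomposition into irreducible $\GL(V_k^{\bar\tau[i,k]})$-representations is governed by the multiple Littlewood--Richardson rule: the multiplicity of $S^{\Nu[i,k]} V_k^{\bar\tau[i,k]}$ is exactly $c_{i,k}$ as defined before the proposition. Multiplying over all $(i,k)$ produces $\LR(\Nu,\Lambda)$, and the matching of total degrees forces $\sum_{(j,k)\in S_{i,k}}|\Lambda[j,k]|=|\Nu[i,k]|$, which is condition (2).

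The main (minor) obstacle is purely book-keeping: one must verify that ``outer times inner''  conventions for the LR coefficients match, that all length bounds are inherited correctly from $\dim V_k^{\bar\tau[i,k]}=m\tau[i,k]$, and that the independence of the $\GL(V_k^{\bar\tau[i,k]})$-actions justifies assembling the multiplicities multiplicatively in both $j$ (for the Kronecker factors) and $(i,k)$ (for the LR factors). Combining the two expansions and summing over all admissible tableaux $\Lambda$ gives exactly $\sum_\Lambda \Kron(\Lambda)\cdot\LR(\Nu,\Lambda)$ as claimed.
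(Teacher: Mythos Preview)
Your proposal is correct and follows essentially the same approach as the paper: both proofs use the decomposition~\eqref{eq:4} to write $S^\bullet\VV^\tau$ as a tensor product of symmetric algebras $S^\bullet W_j$, apply Schur--Weyl duality~\eqref{eq:SWdualmK} on each factor to produce the Kronecker coefficients, and then regroup by pairs $(i,k)$ and apply the multiple Littlewood--Richardson rule to obtain $\LR(\Nu,\Lambda)$. The paper's write-up is somewhat more explicit about the intermediate bookkeeping (commuting $\oplus$ and $\otimes$, tracking the sets $\Lambda(p,\underline\delta)$), but the argument is the same.
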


\begin{proof}
  By \eqref{eq:4}, we have:
  $$
S^\bullet \VV^{\tau}=\bigoplus_{\underline\delta\in\Pc^\NN}\bigotimes_{p\in\Pc}S^{\underline\delta(p)}\VV(p).
$$
By \eqref{eq:SWdualmK}, we get
$$
S^\bullet
\VV^{\tau}=\bigoplus_{\underline\delta\in\Pc^\NN}\bigotimes_{p\in\Pc}
\bigoplus_{
    (\lambda_1,\dots,\lambda_s)\in \Lambda(p,\underline\delta)
}g_{\lambda_1,\dots,\lambda_s}
\bigotimes_{k=1}^s S^{\lambda_k}V_k^{p_k},
$$
where
$$
\Lambda(p,\underline \delta)=\{(\lambda_1,\dots,\lambda_s)\,|\,
    \ell(\lambda_i)\leq m\tau(p_k,k)\mbox{ and }
    |\lambda_i|=\underline \delta(p)\ \forall i\}.
    $$
    Set
    $$
    \Lambda(\underline \delta)=\prod_{p\in\Pc}\Lambda(p,\underline
    \delta).
    $$
    An element $\tilde\lambda\in \Lambda(\underline \delta)$ is the
    data for each entry $(j,k)$ of $\Pc$ of a partition
    $\tilde\lambda[j,k]$ satisfying:
    \begin{itemize}
    \item $|\tilde\lambda[j,k]|=\underline\delta(j)$ for any $j$; and
      \item $\ell(\tilde\lambda[j,k])\leq m\tau(j,k)$ for any $j$.
    \end{itemize}
    We now commute $\oplus$ and $\otimes$:
\[
S^\bullet
\VV^{\tau}=\bigoplus_{\underline\delta\in\Pc^\NN}
\bigoplus_{\tilde\lambda\in \Lambda(\underline \delta)}
\bigotimes_{
  \begin{subarray}{c}
  p\in\Pc {\rm\ with}\\
  {\rm index\ }i
  \end{subarray}
}
g_{\tilde\lambda[i,1],\dots,\tilde\lambda[i,s]}
\bigotimes_{k=1}^s S^{\tilde\lambda[i,k]}V_k^{p_k}.
\]
The inner tensor product in this formula can be rewritten as
$$
\prod_i g_{\tilde\lambda[i,1],\dots,\tilde\lambda[i,s]}\times
\bigotimes_{(i,k) {\rm\ position\ in\ }\Pc}S^{\tilde\lambda[i,k]}V_k^{\Pc[i,k]}.
$$
We now group the terms associated to a given pair $(\Pc[i,k],k)$:
$$
\bigotimes_{(i,k) {\rm\ position\ in\
  }\Pc}S^{\tilde\lambda[i,k]}V_k^{\Pc[i,k]}=
\otimes_{(i,k){\rm \ s.t.\ } i\leq\bar d_k}\otimes_{j{\rm \
    s.t.\ } \Pc[j,k]=i} S^{\tilde\lambda[j,k]}V_k^i.
$$
But
$$
\otimes_{j{\rm \
    s.t.\ } \Pc[j,k]=i} S^{\tilde\lambda[j,k]}V_k^i
=\bigoplus_{\nu(i,k)\in\Nu(\underline\delta)}
c^{\nu(i,k)}_{\tilde\lambda[\bullet,k]}S^\nu V_k^i,
$$
where
$$
\Nu(\underline\delta)=\left \{(\nu(i,k))_{i,k} {\rm\ partitions\ }
\left|
  \begin{array}{l}
    i\leq\bar d_k\\
    \ell(\nu(i,k)\leq m\tau[i,k]\\
    |\nu(i,k)|=\sum_{j{\rm \
    s.t.\ } \Pc[j,k]=i} \underline\delta(j)
  \end{array}
\right .
\right\}.
$$
Finally, we get
$$
S^\bullet
\VV^{\tau}=\bigoplus_{\Nu}\bigoplus_{\Lambda}
\LR(\Nu,\Lambda)\Kron(\Lambda) S^\Nu\VV^\tau.
$$
The proposition follows.
\end{proof}

\bigskip
Proposition~\ref{prop:multCVtauKron} gives an algorithm to
compute the multiplicity:

\begin{algorithm}[H]
\algblock[Name]{try}{EndTry}
\algtext*{EndTry}%
    \caption{Computing determinantal multiplicities of $G^\tau$}
    \label{algo:multGtau}
    \begin{algorithmic}[1]
\State{Compute $\bar\tau$, $m\tau$, $\bar d$ and $\Pc$.} 
\Comment{This step only depends on $\tau$}
\State{Find the maps $\delta\,:\,[p]\longto\NN$ satisfying 2.}
\Statex\Comment{It is the set of integer points in a polytope. SageMath has
an implemented algorithm to enumerate these points. 
}
\State{For each such $\delta$ enumerate the $\Lambda$ satisfying 1. and
  3.}
\State{Compute Kron and LR.}
\Statex\Comment{Using Formulas~\eqref{eq:assKron} and \eqref{eq:assLR}, it is
sufficient to compute Kronecker and LR coefficients for three
partitions. 
For LR, we use Buch's program in Sage. For Kron, we use
SymmetricFunctions.}
   \end{algorithmic}
\end{algorithm}

\subsection{The multiplicity \texorpdfstring{\eqref{eq:dimVL}}{(eq:dimVL)} in the fermionic and bosonic case}

Consider now the fermionic representation $\VV=\bigwedge^rV$ or the bosonic
representation $\VV=S^rV$.
Let $\theta(r)$ denote the partition $1^r$ in the fermionic case and $r$
in the bosonic case, in such a way that $\VV=S^{\theta(r)}V$ in both case.

\begin{prop}
  \label{prop:multCVtauFermion}
  In fermionic or bosonic case,
the $G^{\tau}$-irreducible representation of highest weight $\Nu$
appears in $S^\bullet \VV^{\tau}$ with multiplicity
$$
\sum_{\delta}\sum_{\Mu}\sum_{\Lambda} \Kron(\Lambda).\LR(\Nu,\Mu)\prod_{j=1}^sa(\Lambda(I,j),\theta(I(j)),\Mu(I,j))
$$
where $\delta\,:\,\Pc\longto \NN$ and 
$\Mu$ and $\Lambda$ are maps from 
$
\Pc\times \llbracket s\rrbracket
$
to the set of partitions such that 
\begin{enumerate}
\item $\forall j\in\llbracket s\rrbracket$, we have
  $$
\sum_{I\in\Pc}I(j)\delta(I)=|\Nu(j)|.
  $$
\item $\forall I\in\Pc\quad
  \forall  j\in\llbracket s\rrbracket$,
  $$
  |\Lambda(I,j)|=\delta(I)\quad{\rm and}\quad
  \ell(\Lambda(I,j))\leq\dim(S^{\theta(I(j))}\CC^{m_j}).
$$
\item $\forall I\in\Pc\quad
  \forall  j\in\llbracket s\rrbracket$,
  $$
  |\Mu(I,j)|=I(j)\delta(I)\quad{\rm and}\quad
  \ell(\Mu(I,j))\leq m(j),
  $$
  \end{enumerate}
and
$$
\LR(\Nu,\Mu)=\prod_{j=1}^s c^{\Nu(j)}_{\Mu(\bullet,j)}\quad{\rm
  and}\quad
\Kron(\Lambda)=\prod_{I\in\Pc}g_{\Lambda(I,\bullet)}.
$$

\end{prop}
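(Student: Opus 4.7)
The plan is to follow the same overall strategy as in the proof of Proposition~\ref{prop:multCVtauKron}, but inserting a plethysm step to handle the fact that $\VV = S^{\theta(r)}V$ rather than a pure tensor product. First I would describe $\VV^\tau$ explicitly. Decomposing $V = \bigoplus_{j=1}^s V_j$ according to the $\tau$-eigenspaces (with $\dim V_j = m_j$ and $\tau$-weight $\bar\tau[j]$), the functoriality of $S^{\theta(r)}$ together with the standard branching formulas for $\bigwedge^r$ and $S^r$ of a direct sum yield
\[
\VV^\tau \;=\; \bigoplus_{I\in\Pc}\;\bigotimes_{j=1}^s S^{\theta(I(j))}V_j,
\]
where $\Pc$ is exactly the index set from the statement (tuples of nonnegative integers with $\sum_j I(j)=r$, weight zero under $\bar\tau$, and bounded by $m_j$ in the fermionic case).

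Next, I would apply $S^\bullet$ to this direct sum:
\[
S^\bullet\VV^\tau \;=\; \bigoplus_{\delta:\Pc\to\NN}\; \bigotimes_{I\in\Pc} S^{\delta(I)}\!\Bigl(\bigotimes_{j=1}^s S^{\theta(I(j))}V_j\Bigr).
\]
For each fixed $I$, I would then invoke the multiple Schur--Weyl decomposition \eqref{eq:SWdualmK} applied to the tensor product $\bigotimes_j W_{I,j}$ with $W_{I,j}=S^{\theta(I(j))}V_j$. This produces the Kronecker factor $\Kron(\Lambda)$: one replaces $S^{\delta(I)}$ by a sum over $s$-tuples of partitions $(\Lambda(I,1),\dots,\Lambda(I,s))$ of $\delta(I)$ with $\ell(\Lambda(I,j))\leq \dim W_{I,j}=\dim S^{\theta(I(j))}\CC^{m_j}$, weighted by the multiple Kronecker coefficient $g_{\Lambda(I,1),\dots,\Lambda(I,s)}$, each summand being $\bigotimes_j S^{\Lambda(I,j)}W_{I,j}$.

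The crucial new step, which is the main obstacle compared to Proposition~\ref{prop:multCVtauKron}, is to decompose $S^{\Lambda(I,j)}(S^{\theta(I(j))}V_j)$ \emph{as a $\GL(V_j)$-module}: this is exactly a plethysm, so by definition
\[
S^{\Lambda(I,j)}\!\bigl(S^{\theta(I(j))}V_j\bigr) \;=\; \bigoplus_{\Mu(I,j)} a\bigl(\Lambda(I,j),\theta(I(j)),\Mu(I,j)\bigr)\; S^{\Mu(I,j)}V_j,
\]
with $|\Mu(I,j)| = \delta(I)\cdot I(j)$ and $\ell(\Mu(I,j))\leq m_j$. Plugging this in produces the plethysm factor in the formula.

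Finally I would reorder the tensor product over $(I,j)\in\Pc\times\llbracket s\rrbracket$ by grouping the factors indexed by the same $j$, obtaining $\bigotimes_{j=1}^s \bigotimes_{I\in\Pc} S^{\Mu(I,j)}V_j$, and decompose each inner product by the multiple Littlewood--Richardson rule to extract the coefficient of $S^{\Nu(j)}V_j$, namely $c^{\Nu(j)}_{\Mu(\bullet,j)}$. The constraint $\sum_{I} I(j)\delta(I)=|\Nu(j)|$ is forced by degree. Collecting contributions, the multiplicity of $S^\Nu\VV^\tau$ in $S^\bullet\VV^\tau$ becomes the claimed triple sum over $\delta$, $\Lambda$, $\Mu$. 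The main difficulty will be bookkeeping: one must carefully verify that the ranges of $\delta$, $\Lambda$, $\Mu$ quoted in the statement are exactly those arising from the constraints imposed at each of the three steps (Kronecker, plethysm, LR), with no hidden compatibility conditions besides those enumerated.
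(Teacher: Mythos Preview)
Your proposal is correct and follows essentially the same approach as the paper. The paper's own proof is only a two-line sketch: it says the argument is analogous to Proposition~\ref{prop:multCVtauKron}, using the branching formulas $\wedge^k(E\oplus F)\cong\bigoplus_i \wedge^iE\otimes\wedge^{k-i}F$ and $S^n(E\oplus F)\cong\bigoplus_k S^kE\otimes S^{n-k}F$; you have correctly expanded this sketch, in particular identifying the plethysm decomposition of $S^{\Lambda(I,j)}(S^{\theta(I(j))}V_j)$ as the one genuinely new step beyond the Kronecker argument.
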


\begin{proof}
  We do not give the details, but the proof is similar to that of
  Proposition~\ref{prop:multCVtauKron} using
  
\[
  \wedge^k(E \oplus F) \cong \bigoplus_{i=0}^k \left( \wedge^i E
    \otimes \wedge^{k-i} F \right)
\]

and

\[
  S^n(E \oplus F) \cong \bigoplus_{k=0}^n \left( S^k E \otimes S^{n-k}
    F \right).
\]
\end{proof}

\section{Two filters: Gröbner and linear triangular}
\label{sec:2filters}

In this section, we describe two methods to detect whether $\pi$ is birational. 
Both methods use defining equations of the fibers of $\pi$.

The first method works via a computation of a Gröbner basis of a fiber. 
Unfortunately, the computational complexity of this method is too high to handle a significant amount of cases, even with a probabilistic method. 
For Kron(5,5,5), with 1s limit per $(\tau,w)$ (see Algorithm \ref{algo:Grobner}), the computation is inconclusive for 1387 over the 2261 inequalities from Step 4.
It decides the birationality for  461 over the 462 irredundant inequalities. 
It rejects 413 over the 1798 redundant inequalities. The computation much likely finishes in the birational cases since the Gröbner basis is then simple.

We also describe a faster deterministic method (called {\it linear triangular}) which allows you to select, 
among the pairs $(\tau,w)$ such that $\pi$ is dominant, some for which $\pi$ is birational.
For Kron$(5,5,5)$, this second method tests 2261 inequalities in 50s and selects 51 of them (among the 462 birational cases).

\subsection{The fibers of \texorpdfstring{$\pi$}{pi}}

Here, we come back to the general context of a reductive group $G$
acting on a representation $\VV$. 
Fix an inequality $(\tau,w)$ and the corresponding map $\pi$. 

\begin{lemma}
  \label{lem:fiber}
  Assume that $\pi$ is dominant and that the dimensions of domain and codomain agree.
  \begin{enumerate}
  \item The map $\pi$ is birational if and only if the fiber $\pi\inv (x)$
    over a general point $x\in \VV^{\tau\leq 0}$ is reduced to
    $\{(e,x)\}$.
  \item Let $x\in \VV^{\tau\leq 0}$. Then
    $$
    \pi\inv(x)\simeq\{g\in w\inv U^-w\cap U\;:\;gx\in \VV^{\tau\leq
      0}\}=:F_x.
    $$
  \end{enumerate}
\end{lemma}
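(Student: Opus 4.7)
The starting observation is that $\pi$ is equivariant for the action of $H := w\inv U^-w \cap U$, acting by left multiplication on the first factor of its source $X := H \times \VV^{\tau\leq 0}$ and by the natural inclusion $H \subset G$ on the target $\VV$; indeed $\pi(hg,v)=hgv=h\,\pi(g,v)$. This equivariance is the engine for (1), while (2) reduces to a direct verification.

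For part (2), any $(g,v)\in\pi\inv(x)$ must satisfy $v = g\inv x$, so the first projection identifies $\pi\inv(x)$ bijectively with $\{g\in H\ :\ g\inv x\in\VV^{\tau\leq 0}\}$. Composing with the inversion automorphism $g\mapsto g\inv$ of the group $H$ carries this set onto $F_x$, which is the claimed isomorphism.

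For part (1), since $\pi$ is dominant between smooth irreducible varieties of the same dimension, it is generically finite of some degree $d\geq 1$, and in characteristic zero there is a dense open $\Omega\subset\VV$ over which $\pi$ is étale with fibers of cardinality exactly $d$. The preimage $\pi\inv(\Omega)$ is dense open in the irreducible product $X$, so the second projection $p_2\colon X\to\VV^{\tau\leq 0}$ sends it onto a set that contains a dense open subset $\Omega'\subset\VV^{\tau\leq 0}$. For any $x\in\Omega'$ one may choose $g_0\in H$ with $(g_0,x)\in\pi\inv(\Omega)$, i.e.\ $g_0 x\in\Omega$; the equivariance then provides a bijection $\pi\inv(x)\simeq\pi\inv(g_0 x)$ via left multiplication by $g_0$ on the first factor, and hence $|\pi\inv(x)|=d$. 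Because $(e,x)$ always lies in $\pi\inv(x)$, this fiber reduces to the single point $\{(e,x)\}$ for generic $x\in\VV^{\tau\leq 0}$ if and only if $d=1$, i.e.\ if and only if $\pi$ is birational.

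The only mildly delicate point is that a priori $\VV^{\tau\leq 0}$ could lie entirely in the non-étale locus $\VV\setminus\Omega$, so that sampling $x$ in $\VV^{\tau\leq 0}$ would not see the generic behaviour of $\pi$; the $H$-equivariance bypasses this obstacle by translating a generic $x\in\VV^{\tau\leq 0}$ into $\Omega$ through some $g_0\in H$.
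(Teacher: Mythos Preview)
Your proof is correct and follows the same approach as the paper's: both use the $(w\inv U^-w\cap U)$-equivariance of $\pi$ to transport a general point of $\VV^{\tau\leq 0}$ into the generic locus of $\VV$, and both describe the isomorphism in part~(2) via $(g,v)\mapsto g\inv$. Your version is simply more explicit about why equivariance lets one restrict to $\VV^{\tau\leq 0}$ (the paper compresses this to the phrase ``by equivariance and dominancy of $\pi$, one may assume that $x$ belongs to $\VV^{\tau\leq 0}$'').
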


\begin{proof}
  The map $\pi$ is birational if and only if the fiber $\pi\inv (x)$
    over a general point $x\in \VV$ is reduced to one point. By
    $(w\inv U^-w\cap U)$-equivariance and dominancy of $\pi$, one may
    assume that $x$ belongs to $\VV^{\tau\leq 0}$.
    But, in this case, $\pi\inv(x)$ contains $(e,x)$. This first
    assertion follows.

    Given $x\in \VV^{\tau\leq 0}$, the isomorphism of the second assertion is
    $$
    \begin{array}{ccl}
      \pi\inv(x)&\longto&F_x\\
      (g,y)&\longmapsto &g\inv
    \end{array}
    $$
    with converse $g\longmapsto (g\inv,gx)$.
\end{proof}

Consider an isomorphism $\CC^{\Phi(w)}\cong \mathfrak u(\Phi(w))\stackrel{\varphi}{\rightarrow}w\inv U^-w\cap U$
and fix a basis $(\xi_\chi^*)_{\chi\in \Wt(\VV)}$ of $\VV$ indexed by $\chi\in\Wt(\VV)$ consisting in
$T$-eigenvectors.
Let $(\xi_\chi)_{\chi\in \Wt(\VV)}$ be the dual basis.
Fix $x\in \VV^{\tau\leq 0}$. 

Thanks to Lemma \ref{lem:fiber}, $F_x$ identifies (even scheme-theoretically) with
\begin{equation}\label{eq:eqs_fiber}
\{(v_\beta)_{\beta\in\Phi(w)}\;:\;\xi_\chi(\varphi(v_\beta)x)=0\quad\forall
\chi\in\Wt(\VV)\mbox{ s.t. } \langle\chi,\tau\rangle>0\}.
\end{equation}

Denote by  $f_\chi^x$ the polynomial $(v_\beta)\mapsto
\xi_\chi(\varphi(v_\beta)x)$.
Observe that the functions $f_\chi^x$ vanish at $0$, since
$x\in\VV^{\tau\leq 0}$.

 \subsection{Filter 2: Gröbner bases}
\label{sec:Grobner}
Like we already observed in \eqref{eq:eqs_fiber}, the fiber $F_x$ of Lemma~\ref{lem:fiber} is described as a subscheme of a vector space by an explicit list of equality. This fiber is the reduced point $0$ if and only if the ideal generated by the $(f_{\chi}^x)_{\chi}$ in $\CC[\CC^{\Phi(w)}]$ is the maximal ideal generated by the coordinate functions ($\sharp\Phi(w)$ functions of degree $1$) . This means that the associated Gröbner basis is this family of coordinate functions for any monomial order. Note that if $x$ is chosen in $V_{\QQ}^{\tau\leq 0}$, all the equations lie in $\QQ[\QQ^{\Phi(w)}]$.

However, the computation of Gröbner basis might be too much (time or memory)-consuming. One way to avoid this is to set a time limit (\emph{e.g.} 1s) of computation per pair $(\tau,w)$. This turns this filter into a partial one, leaving some pairs undecided. 

So here is a probabilistic way to implement the search of birationality:

\begin{algorithm}[H]
\algblock[Name]{try}{EndTry}
\algtext*{EndTry}%
    \caption{Deciding birationality of $\pi$ using Gröbner bases (probabilistic answer):}
    \label{algo:Grobner}
    \begin{algorithmic}[1]
\State{Pick a random (general) point $x$ in $\VV_{\QQ}^{\tau\leq0}$.} 
\Statex\Comment{\emph{e.g.} $x$ chosen with integer coordinates in $[\![-1000,1000]\!]$}
\try{ Gr=Gröbner basis of $(f_{\chi}^x)_{\chi}$ in $\QQ[\QQ^{\Phi(w)}]$ \Comment{within a time limit}
   }\label{line:try}
 \EndTry
 \If{line \ref{line:try} finished}
 \If{Gr is made of $\sharp\Phi(w)$ functions of degree $1$}
 \Return{True} 
 \Statex\Comment{Then $\pi$ is birational without uncertainty}
 \Else{}
 \Return{False} 
 \Statex\Comment{even if $\pi$ might be birational with very low probability (if $x$ was not general enough)}
 \EndIf
 \Else{}
 \Return{inconclusive computation}
 \EndIf
   \end{algorithmic}
\end{algorithm}

In order to turn the algorithm into a deterministic one, we only have to replace the random $x$ by the generic point. In other words, let $x$ be the symbolic vector $\sum_{\chi\, :\,\langle \chi, \tau\rangle\leq 0} x_{\chi}\xi^*_{\chi}\in  \QQ(\VV_{\QQ}^{\tau\leq 0})\cong \QQ(x_{\chi}\,:\,\langle \chi, \tau\rangle\leq 0)$
The equations \eqref{eq:eqs_fiber} are now equations $(f_{\chi}^x)_{\chi}$ in $\QQ(\VV_{\QQ}^{\tau\leq 0})[\QQ^{\Phi(w)}]$, and we can compute the Gröbner basis with coefficients in $\QQ(\VV_{\QQ}^{\tau\leq 0})$.
This method is deterministic but slower. For instance, we need a 5-second limit to positively decide the 47 birational cases in Kron(4,4,4) where a 1-second limit was enough with the probabilistic method.

\subsection{Filter 3: Linear Triangular}
\label{sec:LinTri}
In some cases, there is no need to use the heavy machinery of Gröbner basis. Let us first describe an inductive procedure:

\begin{algorithm}[H]
    \caption{A recursive procedure:}
    \label{algo:LinTri_rcurs}
    \begin{algorithmic}[1]
\State{Let $S=\{\chi\in \Wt^{\tau>0} \;:\; f_\chi^x(\in \QQ(\VV_{\QQ}^{\tau\leq 0})[\QQ^{\Phi(w)}]) 
\textrm{ is linear on } \QQ^{\Phi(w)}\}$.} \label{line:select_lin}
\State{Let $\Phi_1=\{\beta\in \Phi(w)\; :\; \exists \chi\in S \textrm{ s.t. }
  v_\beta \textrm{ occurs nontrivially in} f_\chi^x\}$}.
\If {$\Phi_1$ and $S$ share the same cardinality}
\State{Plugin $v_\beta=0$ for all $\beta\in \Phi_1$ in the
    equations $f_\chi^x$ for $\chi\not\in S$.}
\State{Restart at line~\ref{line:select_lin} with equations indexed by $\Wt^{\tau>0}-S$ and with $\QQ^{\Phi(w)}$ replaced by  
$\QQ^{\Phi(w)-\Phi_1}$.}
\EndIf
   \end{algorithmic}
\end{algorithm}
    
If this inductive procedure exhausts $\Phi(w)$,
we say that the system of equations $f_\chi^x=0$ (or the pair $(\tau,w)$) is {\it linear triangular} in the unknown indexed by $\Phi(w)$.

\begin{lemma}
Under the assumption of Lemma \ref{lem:fiber}, if the system of equations $(f_\chi^x)_{\chi \in \Wt^{\tau>0}}=0$ is linear triangular then $\pi$ is birational.
\end{lemma}

\begin{proof}
By construction, the polynomial $f_\chi^x\in \QQ[\VV_{\QQ}^{\tau\leq 0}][\QQ^{\Phi(w)}]$ identifies with $\xi_{\chi}\circ\pi$. 
Since $\VV$ is a linear representation, $f_\chi^x$ is of degree one in $\VV_{\QQ}^{\tau\leq 0}$. 
When it is also linear in the other variables, it thus identifies with $\xi_{\chi}\circ T_{(e,x)}\pi$. 

Let $S, \Phi_1$ as in Algorithm \ref{algo:LinTri_rcurs}, so that $(f_\chi^x)_{\chi \in S}=0$ 
is a linear system with the same number of equations and of unknowns (indexed by $\Phi_1$). 
From the discussion obove, it identifies with a square block of the block-triangular system $T_{(e,x)}\pi=0$.
By the hypothesis on $\pi$, its only solution is zero. 
Whence the inductive step.
\end{proof}

\bibliographystyle{alpha}
\bibliography{biblio_BDR}
\end{document}